\numberwithin{equation}{section}
\titleformat*{\section}{\Large \scshape\center}
\titleformat*{\subsection}{\fontsize{14}{14} \sffamily}
\theoremstyle{plain}
\newtheorem{theorem}{Theorem}[section]
\newtheorem*{theorem*}{Theorem}
\newtheorem{lemma}[theorem]{Lemma}
\newtheorem{proposition}[theorem]{Proposition}
\newtheorem{thm*}{Theorem}
\newtheorem{prop*}{Proposition}
\newtheorem{corollary}[theorem]{Corollary}
\theoremstyle{definition}
\newtheorem{definition}[theorem]{Definition}
\newtheorem*{definition*}{Definition}
\newtheorem{example}[theorem]{Example}
\newtheorem{assumption}{Assumption}
\theoremstyle{remark}
\newtheorem{remark}[theorem]{Remark}
\newtheorem{remarks}[theorem]{Remarks}
\newcommand{\BUC}{\operatorname{BUC}}
\newcommand{\C}{\mathbb{C}}
\newcommand{\N}{\mathbb{N}}
\newcommand{\R}{\mathbb{R}}
\newcommand{\T}{\mathbb{T}}
\newcommand{\Z}{\mathbb{Z}}
\newcommand{\1}{\mathbbm{1}}
\newcommand{\Fc}{\mathcal{F}}
\newcommand{\Hc}{\mathcal{H}}
\newcommand{\Jc}{\mathcal{J}}
\newcommand{\Lc}{\mathcal{L}}
\DeclareMathOperator{\BO}{BO}
\DeclareMathOperator{\BDO}{BDO}
\DeclareMathOperator{\supp}{supp}
\DeclareMathOperator{\Co}{Co}
\newcommand{\from}{\colon}
\newcommand{\scpr}[2]{\left\langle #1, #2 \right\rangle}
\renewcommand{\sp}{\scpr}
\newcommand{\abs}[1]{\left\lvert#1\right\rvert}
\newcommand{\set}[1]{\left\{ #1\right\}}
\newcommand{\vertiii}[1]{{\left\vert\kern-0.25ex\left\vert\kern-0.25ex\left\vert #1
    \right\vert\kern-0.25ex\right\vert\kern-0.25ex\right\vert}}
\renewcommand{\epsilon}{\varepsilon}
\begin{document}
\pagenumbering{gobble}
\title{Band-dominated and Fourier-band-dominated operators on locally compact abelian groups}
\author{Robert Fulsche and Raffael Hagger}
\date{\today}
\maketitle

\begin{abstract}
    By relating notions from quantum harmonic analysis and band-dominated operator theory, we prove that over any locally compact abelian group $G$, the operator algebra $\mathcal C_1$ from quantum harmonic analysis agrees with the intersection of band-dominated operators and Fourier band-dominated operators. As an application, we characterize the compactness of operators acting on $L^2(G)$ and compare it with previous results in the discrete case. In particular, our results can be seen as a generalization of the limit operator concept to the non-discrete world. Moreover, we briefly discuss property $A'$ for arbitrary locally compact abelian groups.
\end{abstract}

\medskip
\textbf{AMS subject classification:} Primary: 47L80; Secondary: 47B90; 47B07; 22D10; 22D25; 43A70 

\medskip
\textbf{Keywords:} quantum harmonic analysis, locally compact abelian groups, operator algebras, coorbit theory, compactness characterization, limit operators

\pagenumbering{arabic}

\section{Introduction}

On $\ell^2(\N)$ every bounded linear operator can be represented as an infinite matrix. If this matrix only contains finitely many non-zero diagonals, it is called a band matrix and the corresponding operator is called a band operator. These band operators form an algebra and thus taking the closure, one obtains a Banach algebra (even a $C^*$-algebra, but this is usually not so relevant in this theory) called the band-dominated operators \cite{Lindner2006,Rabinovich_Roch_Silbermann2004}. The algebra of band-dominated operators contains a lot of operators of interest such as Jacobi operators (that is, discrete Schr\"odinger operators), discretizations of differential operators, Toeplitz operators with continuous symbols as well as all compact operators. Band-dominated operators are therefore a natural class of operators to study from an algebraic point of view. It turns out that this class is exactly the right algebra for studying compactness and Fredholm properties. For technical reasons let us embed $\ell^2(\N)$ into $\ell^2(\Z)$ here and consider operators on $\ell^2(\Z)$ instead. The idea is now quite simple. Whether a band operator is Fredholm does not depend on any finite number of entries of the matrix. It follows that all the information must somehow be stored at infinity. To get there, we can shift the matrix along the diagonal and take the limit in the strong operator topology. Now obviously this limit does not have to exist, but a simple Bolzano-Weierstrass argument shows that there is at least a subsequence along which the strong limit exists. These limits are called limit operators. It is then quite easy to see that a band-dominated operator is compact if and only if all of its limit operators are $0$. Far more difficult to show but related is the fact that a band-dominated operator is Fredholm if and only if all limit operators are invertible \cite{Lindner_Seidel}.

The latter result by Lindner and Seidel, which solved a problem that was open for decades, sparked a lot of interest in this area. Several generalizations of this result have since been obtained. Maybe most notably, shortly after the initial result, which was obtained for $\ell^p(\Z^N)$, \v{S}pakula and Willett \cite{Spakula_Willett} generalized the above results to $\ell^p(X)$, where $X$ is a strongly discrete metric space of bounded geometry that satisfies Yu's property A \cite{Yu}. This also revealed some interesting connections to the coarse Baum--Connes conjecture, but we want to focus on one particular aspect here. As a side result, \v{S}pakula and Willett showed that if the space does not satisfy property A, then there must be a non-compact band-dominated operator such that all limit operators vanish. Such operators (compact or not) are called ghost operators, hence another way of putting it would be: The strongly discrete metric space of bounded geometry satisfies Yu's property A if and only if there are no non-compact ghosts.

So far, everything took place on a discrete metric space despite the fact that it is straightforward to generalize the notion of band-dominated operators to arbitrary metric spaces. In fact, just think of infinite matrices as the kernel of a discrete integral operator. Then the generalization to arbitrary integral operators is fairly obvious, and in fact, restricting to integral operators is not necessary at all and not even a metric is really needed. We refer to Section \ref{sec:operator_algebras} for the precise definition. But even though there are some tricks (see \cite{Lindner2006}, for instance) to study certain classes of operators over non-discrete spaces, such as identifying $L^p(\R) \cong \ell^p(\Z,L^p([0,1)))$, an equally satisfactory characterization of compactness has not yet been obtained in this setting. An attempt was made in \cite{HaggerSeifert}, where a replacement for property A was proposed, called property A', which is more versatile for our operator theoretic problems. Moreover, it is shown there that under very mild assumptions, the two conditions are actually equivalent. However, one major problem remained. There are plenty of ghost operators in the non-discrete setting. Just think of a multiplication operator on $L^2(\R)$ with a symbol that vanishes at infinity. Then even without going into the technical details, it is fairly obvious that all limit operators must vanish. However, such a multiplication operator is compact if and only if the symbol is $0$.

So, if it does not even work for the simplest operators, maybe all hope is lost? Not quite. The idea of \cite{HaggerSeifert} was to add a projection onto a subspace into the equation, which at least deals with cases where the operators of interest are not actually defined on the $L^p$-space itself but rather on a closed subspace such as Fock or Bergman spaces (besides \cite{HaggerSeifert}, see also \cite{Fulsche_Hagger} and \cite{Hagger2019} for how this works out). However, this is still not very satisfactory, as the very basic case of operators on $L^2(\R)$ can still not be tackled this way despite the fact that $\R$ obviously satisfies all the reasonable geometric assumptions. In this paper, we now propose a new strategy using \emph{quantum harmonic analysis} (QHA). The tools of QHA has proven to be rather successful for many different problems in recent times; see~\cite{Fulsche2020, Fulsche_Galke2025, Luef_Skrettingland2021, werner84} and the references therein. In QHA a certain group action $\alpha$ of an \emph{abelian phase space}, acting on the bounded linear operators of a Hilbert space, is introduced. This $\alpha$ can be seen as the shifting process introduced above. Usually, the abelian phase space is chosen of the form $\Xi = G \times \widehat{G}$ and the Hilbert space is $\mathcal H = L^2(G)$. Here, $G$ is a locally compact abelian (lca) group and $\widehat{G}$ denotes its Pontryagin dual. A particular role in QHA is then played by the C$^\ast$-algebra
\begin{align*}
    \mathcal C_1(\mathcal H) := \{ A \in \mathcal L(\mathcal H): ~\| A - \alpha_z(A)\|_{op} \to 0, ~\text{ as } z \to e\},
\end{align*}
where $e$ is the neutral element of the group $\Xi$. For $G := \mathbb R^{d}$ (and hence $\Xi = \mathbb R^{2d}$), $\mathcal C_1(\mathcal H)$ has spectral properties similar to band-dominated operators, cf.~\cite{Fulsche2024,Fulsche_Hagger}. In fact, it is well known that in case $G = \Z^N$ (hence $\Xi = \Z^N \times \T^N$) the algebra of band-dominated operators actually agrees with $\mathcal C_1(\ell^2(\mathbb Z))$, cf.~\cite[Theorem 2.1.6]{Rabinovich_Roch_Silbermann2004}.

In this paper, we explore the precise connection between band-dominated operators and the operator algebra $\mathcal C_1(L^2(G))$. As a consequence, this will provide a compactness characterization in the same spirit as the original result on $\ell^p(\Z)$. Our two main results are the following:

\begin{thm*}
    Let $G$ be a locally compact abelian group. Then the following equality holds:
    \[\mathcal C_1(L^2(G)) = \operatorname{BDO}(L^2(G)) \cap \mathcal F^{-1} \operatorname{BDO}(L^2(\widehat{G}))\mathcal F.\]
\end{thm*}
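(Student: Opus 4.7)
The plan is to reformulate each factor of the right-hand intersection as a \emph{one-parameter} continuity condition for the QHA action $\alpha$, after which the theorem reduces to a short triangle-inequality argument. Concretely, I would first establish the two intrinsic descriptions
\[
\operatorname{BDO}(L^2(G)) = \{A\in\mathcal{L}(L^2(G)) : \|\alpha_{(0,\xi)}(A)-A\|_{op} \to 0 \text{ as } \xi\to e_{\widehat{G}}\}
\]
and
\[
\mathcal{F}^{-1}\operatorname{BDO}(L^2(\widehat{G}))\mathcal{F} = \{A : \|\alpha_{(x,0)}(A)-A\|_{op}\to 0 \text{ as } x\to e_G\};
\]
that is, $\operatorname{BDO}$ is exactly $\alpha$-continuity under the modulation subgroup $\{e_G\}\times\widehat{G}$ of $\Xi$, while its Fourier conjugate is $\alpha$-continuity under the translation subgroup $G\times\{e_{\widehat{G}}\}$.

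The first of these descriptions is the core step. For the inclusion $\subseteq$, any band operator $A$ with integral kernel $k(x,y)$ supported in $\{(x,y):x-y\in K\}$ for a compact $K\subset G$ has $\alpha_{(0,\xi)}(A)=M_\xi A M_\xi^{-1}$ with kernel $\chi_\xi(x-y)\,k(x,y)$, and since $\chi_\xi\to 1$ uniformly on $K$ as $\xi\to e_{\widehat{G}}$, norm continuity follows; passing to closures handles the whole of $\operatorname{BDO}$. For the converse, I would average $A$ against a suitable $\phi\in L^1(\widehat{G})$ whose Fourier transform $\widehat{\phi}$ has compact support in $G$:
\[
A_\phi := \int_{\widehat{G}} \phi(\xi)\,\alpha_{(0,\xi)}(A)\,d\xi.
\]
A direct kernel calculation shows that $A_\phi$ has kernel $\widehat{\phi}(x-y)\,k(x,y)$, hence is a band operator, and modulation-continuity of $A$ lets one push $\phi$ through a bounded approximate identity to obtain $A_\phi\to A$ in operator norm. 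The Fourier-dual description is then obtained by repeating the argument on $\widehat{G}$ and conjugating by $\mathcal{F}$ via the intertwining identity $\mathcal{F}T_x\mathcal{F}^{-1}=\tilde M_{-x}$.

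Granted both descriptions, the theorem is a two-line computation. Writing $\pi(x,\xi)=T_xM_\xi$ for the QHA representation (multiplicative up to a $U(1)$-cocycle that cancels under conjugation), one has the factorization $\alpha_{(x,\xi)}=\alpha_{(x,0)}\circ\alpha_{(0,\xi)}$, so
\[
\alpha_{(x,\xi)}(A)-A = \alpha_{(x,0)}\bigl(\alpha_{(0,\xi)}(A)-A\bigr) + \bigl(\alpha_{(x,0)}(A)-A\bigr).
\]
Since $\alpha_{(x,0)}$ is an isometric $\ast$-automorphism of $\mathcal{L}(L^2(G))$, the triangle inequality yields
\[
\|\alpha_{(x,\xi)}(A)-A\|_{op}\le \|\alpha_{(0,\xi)}(A)-A\|_{op}+\|\alpha_{(x,0)}(A)-A\|_{op},
\]
which tends to $0$ as $(x,\xi)\to(e_G,e_{\widehat{G}})$ whenever $A$ lies in the intersection; hence $A\in\mathcal{C}_1(L^2(G))$. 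The reverse inclusion is immediate by restricting joint $\alpha$-continuity to each coordinate subgroup. The main obstacle is the averaging step in the BDO characterization: for a general LCA group one must construct a bounded approximate identity in $L^1(\widehat{G})$ consisting of functions with compactly supported Fourier transforms, arrange strong-operator measurability of $\xi\mapsto\alpha_{(0,\xi)}(A)$ so the Bochner integral makes sense, and carefully manage the bookkeeping between kernels and operator-norm estimates; once this is in place, the theorem itself follows from the short computation above.
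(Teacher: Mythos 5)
Your overall architecture is the same as the paper's: identify $\operatorname{BDO}(L^2(G))$ with the operators that are norm-continuous under the modulation part of $\alpha$ (the paper's $\mathcal C_{0,1}$), obtain the Fourier-conjugated statement by Pontryagin duality, and intersect. Your converse direction is also essentially the paper's proof: averaging $A$ against $g \in L^1(\widehat G)$ with compactly supported $\widehat g$ produces the kernel $\widehat g(x^{-1}y)k_A(x,y)$, hence a band operator, and a bounded approximate identity gives norm convergence (the paper packages this via the Cohen--Hewitt factorization theorem and uses Gelfand rather than Bochner integrals, which sidesteps the measurability worry you raise, since $\mathcal L(L^2(G))$ is a dual space).

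There is, however, a genuine gap in your forward inclusion $\operatorname{BO}\subseteq\mathcal C_{0,1}$. You argue that $\alpha_{(0,\xi)}(A)-A$ has kernel $(\chi_\xi(x-y)-1)k(x,y)$ and that the claim follows because $\chi_\xi\to 1$ uniformly on the band $K$. But uniform smallness of a kernel multiplier on the support of $k$ does \emph{not} control the operator norm of the multiplied operator: this is a Schur-multiplier problem, and the sup norm of a Toeplitz-type multiplier $\phi(x-y)$ on a compact band does not dominate its multiplier norm (for non-discrete $G$ the restriction algebra $B(G)|_K$ is a proper subalgebra of $C(K)$, since intervals are not Helson sets). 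Moreover, band operators need not have function kernels at all; their kernels live in $S_0'(G\times G)$ (think of $M_f$ or a shift, with kernel a measure on the diagonal), so a pointwise estimate on the multiplier is not even meaningful in general. The statement you want is nevertheless true, and the correct justification is exactly the averaging identity from your own converse direction, run backwards: choose $g\in L^1(\widehat G)$ with $\widehat g\equiv 1$ on $K$, so that $k_A = \widehat g(x^{-1}y)k_A$ and hence $A=(\delta_0\otimes g)\ast A$; then
\[
\bigl\|\alpha_{(0,\xi)}(A)-A\bigr\|_{op}=\bigl\|\bigl(\delta_0\otimes(\alpha_\xi g-g)\bigr)\ast A\bigr\|_{op}\le\|\alpha_\xi g-g\|_{L^1(\widehat G)}\,\|A\|_{op}\longrightarrow 0
\]
by continuity of translation in $L^1(\widehat G)$. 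This is precisely how the paper proves the inclusion $\operatorname{BDO}\subseteq\mathcal C_{0,1}$, so the fix costs nothing beyond machinery you already set up; but as written, the step ``uniform convergence on $K$ implies norm continuity'' is a non sequitur.
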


Here, $\Fc \from L^2(G) \to L^2(\widehat{G})$ denotes the Fourier transform, and in the following $\partial\Xi$ will denote the boundary of $\Xi$ in a certain compactification of $\Xi$.

\begin{thm*}
    Let $G$ be a locally compact abelian group, $\Xi := G \times \widehat{G}$ and $B \in \Lc(L^2(G))$. Then $B$ is compact if and only if $B \in \BDO(L^2(G))$, $\Fc B \Fc^{-1} \in \BDO(L^2(\widehat{G}))$ and $\alpha_z(B) = 0$ for every $z \in \partial\Xi$. 
\end{thm*}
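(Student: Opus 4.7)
The plan is to combine the preceding structural theorem with a QHA-style compactness characterization inside $\mathcal C_1(L^2(G))$. By that theorem, the joint hypotheses $B \in \BDO(L^2(G))$ and $\mathcal F B \mathcal F^{-1} \in \BDO(L^2(\widehat{G}))$ are equivalent to the single condition $B \in \mathcal C_1(L^2(G))$, so the task reduces to showing that, for $B \in \mathcal C_1(L^2(G))$, compactness is equivalent to $\alpha_z(B) = 0$ for every $z \in \partial\Xi$.

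For the forward direction, recall first that $\mathcal K(L^2(G)) \subset \mathcal C_1(L^2(G))$: the unitaries $W_z$ implementing $\alpha_z$ are strongly continuous in $z$, and on the compact ideal this upgrades to norm continuity of the orbit map $z \mapsto W_z B W_z^*$. As $z$ leaves every compact subset of $\Xi$, the $W_z$ tend to zero weakly, and compactness of $B$ then forces $\alpha_z(B) \to 0$ in the strong operator topology. Since for $B \in \mathcal C_1$ the orbit extends continuously to $\Xi \cup \partial\Xi$, the extension must vanish on $\partial\Xi$, as required.

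For the backward direction I would invoke the QHA convolution formalism: for $B \in \mathcal C_1$ one has an approximate identity of compactly supported trace-class symbols $(\rho_\lambda)$ such that the smoothings $B \star \rho_\lambda$ converge to $B$ in operator norm, and each smoothing is realised as an integral of translates $\alpha_z(B)$ weighted by a compactly supported trace-class-valued kernel. Under the hypothesis that $\alpha_z(B)$ vanishes on $\partial\Xi$, standard QHA identities should then identify each $B \star \rho_\lambda$ as a trace-class, hence compact, operator; passing to the norm limit yields the compactness of $B$ itself.

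The main obstacle is quantitative: because $\|\alpha_z(B)\| = \|B\|$ is constant on $\Xi$, the vanishing $\alpha_z(B) = 0$ on $\partial\Xi$ necessarily holds in a topology strictly weaker than the operator norm, and promoting this abstract boundary vanishing into the quantitative decay on $\Xi$ needed to conclude compactness of the smoothings $B \star \rho_\lambda$ demands a careful choice of the compactification $\Xi \cup \partial\Xi$ together with a precise correspondence between the $\alpha$-orbit at infinity and the vanishing of Berezin-type transforms built from $B$. Once this correspondence is in place, the rest of the argument should follow the pattern outlined above.
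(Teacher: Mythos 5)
Your reduction to $\mathcal C_1(L^2(G))$ is exactly the paper's route: the paper establishes $\mathcal C_1^p(G) = \BDO^p(G) \cap \Fc^{-1}\BDO^p(\widehat{G})\Fc$ (Corollary \ref{corollary:equality_algebras}) and then appeals to the standard QHA compactness characterization inside $\mathcal C_1$, explicitly omitting that part as an imitation of the cited works of Werner and Fulsche--Luef--Werner. Your forward direction is also sound: $\mathcal K(L^2(G)) \subseteq \mathcal C_1(L^2(G))$, the shifts $U_z$ tend to zero weakly as $z$ leaves compact sets, compactness of $B$ upgrades this to $\alpha_z(B) \to 0$ strongly, and the strong continuity of the extended orbit map on $\mathcal M(\BUC(\Xi))$ forces the boundary values to vanish.

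The backward direction, however, contains a genuine gap, and it sits exactly where you yourself flag the ``main obstacle''. The claim that the smoothings $\rho_\lambda \ast B = \int_\Xi \rho_\lambda(z)\,\alpha_z(B)\,\mathrm{d}z$ are trace class (or even compact) is false in general: for $B = I$ one gets $\rho_\lambda \ast I = \big(\int_\Xi \rho_\lambda\big) I$, and at that point the boundary hypothesis has not been used at all, so no choice of approximate identity $(\rho_\lambda)$ in $L^1(\Xi)$ can make this step work. The actual mechanism in the argument the paper invokes is different: one forms the operator-to-function convolution $(B \ast S)(z) := \tr\big(B\,\alpha_z(RSR)\big)$ with $S$ trace class and $R$ the parity operator. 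For $B \in \mathcal C_1$ this function lies in $\BUC(\Xi)$, its continuous extension to $\mathcal M(\BUC(\Xi))$ has boundary value $\tr(\alpha_z(B)RSR)$, so the hypothesis $\alpha_z(B) = 0$ on $\partial\Xi$ yields $B \ast S \in C_0(\Xi)$; one then uses the inclusion $C_0(\Xi) \ast \mathcal T^1 \subseteq \mathcal K$ together with a Wiener-regular trace class $S$ for which the iterated convolutions reproduce $B$ as a norm limit. Establishing this chain --- the correspondence $\mathcal C_1 \ast \mathcal T^1 \subseteq \BUC(\Xi)$, the identification of the boundary values, the regularity of $S$, and $C_0 \ast \mathcal T^1 \subseteq \mathcal K$ --- is precisely the content of the QHA lemmas being cited, and your proposal acknowledges but does not supply this correspondence. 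As written, the backward implication is a statement of intent rather than a proof.
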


Indeed, what we are going to prove is a bit more detailed than the above statements, and the precise formulation needs some more technicalities that we only mention in later parts of the paper. To briefly summarize this, we prove the result not only for the Hilbert space $L^2(G)$, but also for operators on the modulation spaces $M^p(G)$. If the group $G$ is discrete, then $M^p(G) = \ell^p(G)$, putting the results in alignment with the classical results for band-dominated operators over sequence spaces, noting that $\BDO(M^p(\widehat{G})) = \Lc(M^p(\widehat{G}))$ and $\partial\Xi = \partial G \times \widehat{G}$ in this case. When the group is not discrete, then $M^p(G) \neq L^p(G)$ for $p \neq 2$. Further, we are actually able to classify the band-dominated operators (and not only their intersection with Fourier-BDO) in terms of continuity with respect to a certain group action, leading to the above theorem. Let us also mention that we do not assume that $G$ is metrizable. In fact, we do not even need first countable. This contrasts to previous work such as \cite{HaggerSeifert} and \cite{Spakula_Willett}, where the metric is an essential part of the theory. Instead of a metric, we will use the uniform structure that the lca group provides. We note that this involves some technical issues when the group is not second countable. Nevertheless, all these problems can be overcome, mainly by making an appropriate change in the definition of what the space $L^\infty$ is. We shall not elaborate on the details of this here and instead refer to the discussions in \cite[End of Section 3]{Fulsche_Galke2025} or \cite[Section 2.3]{Folland2016}.

The paper is organized as follows. In Section \ref{sec:coorbit}, we describe the basic setting of the present work, combined with an introduction to coorbit spaces, as suitable for the precise setting we are working with. Since we also want to make this section available as a reference for later work, we put some more details there than would be strictly necessary for this paper. Section \ref{sec:operator_algebras} contains the precise definitions and statements of our main results as well as their proofs. We also provide some simple examples to indicate how our results can be applied. Finally, we add a short appendix to show that lca groups always satisfy property A'. This explains why unlike in the case of \v{S}pakula and Willett \cite{Spakula_Willett} no additional geometrical assumption is necessary.

\section{Coorbit spaces}\label{sec:coorbit}

Let us first make some basic assumptions that we will adhere to for the rest of the paper.

\begin{assumption}
    We assume that $\Xi$ is a locally compact abelian (lca) group with identity $e$ and $m \from \Xi \times \Xi \to S^1$ is a separately continuous 2-cocycle on $\Xi$ which is a Heisenberg multiplier, that is, it satisfies
    \[m(xy,z)m(x,y) = m(x,yz)m(y,z)\]
    for $x,y,z \in \Xi$ and $x \mapsto \sigma(x,\cdot) := m(x,\cdot)/m(\cdot,x)$ is a topological group isomorphism from $\Xi$ to $\widehat{\Xi}$. 

    In addition, we assume that $m(x,y) = m(x^{-1}, y^{-1})$ for each $x, y\in \Xi$ as well as $m(e,e) = 1$.
\end{assumption}
These assumptions ensure that there is a unique (up to unitary equivalence) irreducible projective unitary representation $(\mathcal H, U_x)$ with $m$ as a cocycle, that is,
    \[U_xU_y = m(x,y)U_{xy}\]
    for $x,y \in \Xi$, cf.\ \cite[Theorem 3.3]{Baggett_Kleppner1973} for the relevant uniqueness theorem and \cite[Section 2]{Fulsche_Galke2025} for more explanations on this. Since we assumed $m(e,e) = 1$, it is not difficult to verify that $U_x^\ast = \overline{m(x,x^{-1})}U_{x^{-1}}$. Moreover, the cocycle condition and $m(e,e) = 1 $ imply $m(e,x) = 1$ for all $x \in \Xi$.
    
\begin{assumption} 
    We always add the standing assumption that the representation $(U_x)_{x \in \Xi}$ is \emph{integrable}, that is, that the set $\mathcal H_1 := \{ f \in \mathcal H: ~x \mapsto \langle f, U_x f\rangle_{\mathcal H} \in L^1(\Xi)\}$ of $\mathcal H$ is non-trivial. 
\end{assumption}

\begin{remarks}\label{rem:one}
\begin{itemize}
    \item[(a)] Note that any 2-cocycle is similar to one with $m(e,e) = 1$ (consider, e.g. $m'(x,y) = \frac{a(x)a(y)}{a(xy)}m(x,y)$, where $a(x) = \overline{m(x,e)}$). Hence, it is not a significant restriction that we assumed $m(e,e) = 1$ for convenience. On the operator theory side, the representations are related by $U_x' = a(x)U_x$, so here the assumption $m(e,e) = 1$ is not a significant restriction either.
    \item[(b)] The assumption that $m(x,y) = m(x^{-1},y^{-1})$ for all $x,y \in \Xi$ ensures that there exists a unitary operator $R \in \mathcal U(\mathcal H)$ satisfying $RU_x = U_{x^{-1}}R$. $R$ can be chosen to be self-adjoint (cf.\ \cite[Section 2]{Fulsche_Galke2025}).
    \item[(c)] The uniqueness of the irreducible representation, together with the separate continuity of the cocycle, guarantee that the representation is continuous in strong operator topology.
    \item[(d)] Note that the set $\mathcal H_1$ defined in the assumption is automatically dense in $\mathcal H$ and even contains a dense subspace of $\mathcal H$. This can be proven as for standard unitary representations, cf.\ \cite[Proposition 14.5.1]{Dixmier_1977}. Further, it is itself indeed a subspace, as we will see later.
    \item[(e)] Although the groups under consideration are abelian, we decided to write the group operation in a multiplicative way (that is, $xy$ instead of $x+y$ and $x^{-1}$ instead of $-x$). The reason for this is that we are particularly interested in the special case $\Xi = G \times \widehat{G}$ and $\widehat{G} = \set{\chi \from G \to S^1 : \chi \text{ is a continuous group homomorphism}}$ is necessarily multiplicative (see Section \ref{sec:operator_algebras}). The neutral element of $\widehat{G}$ will always be denoted by $1$.
    \item[(f)] If $\Xi$ is of the form $G \times \widehat{G}$, then
    \[m((x,\varphi),(y,\psi)) := \overline{\psi(x)}\]
    and a corresponding projective representation is given by
    \[U_{(x,\varphi)} \from L^2(G) \to L^2(G), \quad \big[U_{(x,\varphi)}f\big](y) := \varphi(y)f(x^{-1}y)\]
    for $(x,\varphi) \in \Xi = G \times \widehat{G}$.
\end{itemize}
\end{remarks}

We are now going to describe the basics of coorbit theory in the setting of our projective representations. The theory of coorbit spaces has been developed in the papers \cite{Feichtinger_Groechenig1988, Feichtinger_Groechenig1989a, Feichtinger_Groechenig1989b} by H.~G.~Feichtinger and K.~Gr\"{o}chenig. A detailed discussion for the case of projective representations has been given in \cite{Christensen1996}.

In the following, we are going to describe the essentials of the theory of coorbit theory that are necessary for the following sections of the paper. Since we do not assume the reader to be familiar with this theory, we decided to formulate the results in precisely the setting we are working in (in contrast to the more general works cited above) and to give full proofs of all these results. 
We do not want to hide that our discussion of coorbit theory is based on the very nice survey \cite{Berge2022}, and most of the results between Lemma \ref{lemma:WaveletTrafoIsometric} and Proposition \ref{prop:rangeWaveletTrafo} are based on the analogous results in that survey without giving explicit reference at each occasion.

Fix $0 \neq \varphi_0 \in \mathcal H_1$. For $f \in \mathcal H$ and $x \in \Xi$ we will write $\big[\mathcal W_{\varphi_0}(f)\big](x) := \langle f, U_x \varphi_0\rangle_{\mathcal H}$ for the \emph{Wavelet transform} with window $\varphi_0$. The Wavelet transform is injective and sends elements from $\mathcal H$ to $L^2(\Xi)$:

\begin{lemma}\label{lemma:WaveletTrafoIsometric}
    There exists a constant $c > 0$ such that for each $f, g \in \mathcal H$ and $\varphi_0, \varphi_1 \in \mathcal H_1$:
    \begin{align*} 
    \int_\Xi \big[\mathcal W_{\varphi_0}(f)\big](x) \overline{\big[\mathcal W_{\varphi_1}(g)\big](x)}\, \mathrm{d}x = c \langle f, g\rangle_{\mathcal H} \langle \varphi_1, \varphi_0\rangle_{\mathcal H}.
 \end{align*}
\end{lemma}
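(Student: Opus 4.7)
The plan is to apply Schur's lemma to the irreducible projective representation $U$. First I would address convergence: for $\varphi_0\in\mathcal H_1$ the pointwise bound $|\mathcal W_{\varphi_0}(\varphi_0)(x)|\le\|\varphi_0\|^2$ combined with $\mathcal W_{\varphi_0}(\varphi_0)\in L^1(\Xi)$ gives $\mathcal W_{\varphi_0}(\varphi_0)\in L^2(\Xi)$. Using $U_y^\ast=\overline{m(y,y^{-1})}U_{y^{-1}}$ and the cocycle identity, one obtains the intertwining
\[\mathcal W_{\varphi_0}(U_yf)(x)=m(y,y^{-1})\,\overline{m(y^{-1},x)}\,\mathcal W_{\varphi_0}(f)(y^{-1}x),\]
from which $\mathcal W_{\varphi_0}(f)\in L^2(\Xi)$ for every $f$ in the dense (by irreducibility) cyclic subspace $\mathcal H_0:=\operatorname{span}\set{U_y\varphi_0:y\in\Xi}$.

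Next, fix $\varphi_0,\varphi_1\in\mathcal H_1$ and consider the sesquilinear form $Q(f,g):=\int_\Xi\mathcal W_{\varphi_0}(f)(x)\,\overline{\mathcal W_{\varphi_1}(g)(x)}\,dx$ together with its representing operator $T$. A direct calculation yields $\langle TU_yf,U_yg\rangle=\langle Tf,g\rangle$ for all $y\in\Xi$: the change of variables $x\mapsto y^{-1}x$ is Haar-preserving, and the cocycle factors arising in $\langle U_yf,U_x\varphi_0\rangle$ and in $\overline{\langle U_yg,U_x\varphi_1\rangle}$ are complex conjugates of one another, so their product is $1$ since $|m|=1$. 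Consequently $T$ commutes with every $U_y$, and Schur's lemma for irreducible projective representations forces $T=c(\varphi_0,\varphi_1)I$ for some scalar.

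Finally, to determine $c(\varphi_0,\varphi_1)$ I would substitute $x\mapsto x^{-1}$ (using unimodularity of $\Xi$) and apply $U_{x^{-1}}=m(x,x^{-1})U_x^\ast$; the new cocycle factors again cancel, and one obtains
\[\int_\Xi\mathcal W_{\varphi_0}(f)\,\overline{\mathcal W_{\varphi_1}(g)}\,dx=\int_\Xi\mathcal W_g(\varphi_1)\,\overline{\mathcal W_f(\varphi_0)}\,dx.\]
For $f,g\in\mathcal H_1$, the right side can be evaluated by the Schur identity itself (now with windows $g,f$ and vectors $\varphi_1,\varphi_0$), giving $c(\varphi_0,\varphi_1)\langle f,g\rangle=c(g,f)\langle\varphi_1,\varphi_0\rangle$. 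Since the two sides depend on independent pairs of variables, the ratio $c(\varphi_0,\varphi_1)/\langle\varphi_1,\varphi_0\rangle$ must equal a fixed constant $c$, and evaluating at $\varphi_0=\varphi_1$ and $f=g=\varphi_0$ yields $c\|\varphi_0\|^4=\|\mathcal W_{\varphi_0}(\varphi_0)\|_{L^2(\Xi)}^2>0$, strict positivity following from continuity of $\mathcal W_{\varphi_0}(\varphi_0)$ (a consequence of strong continuity of $U$) and its value $\|\varphi_0\|^2$ at $x=e$. The principal obstacle is the mutual dependence between Schur's lemma and the a priori only densely meaningful integrals: one must first secure $L^2$-boundedness of $\mathcal W_{\varphi_0}$ on $\mathcal H_0$ via the intertwining, and extend to all of $\mathcal H$ by density, before Schur's lemma can be rigorously applied.
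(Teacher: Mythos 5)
The paper itself offers no proof of this lemma: it is dispatched with the remark that it is ``just a reformulation of Godement's orthogonality relations,'' together with a citation. Your Schur-lemma argument is essentially the standard proof hiding behind that citation, and most of its steps are sound: the intertwining formula agrees with Lemma \ref{lem:V_x}; in the invariance computation the cocycle prefactors of $\mathcal W_{\varphi_0}(U_yf)(x)$ and $\overline{\mathcal W_{\varphi_1}(U_yg)(x)}$ depend only on $(y,x)$ and are mutually conjugate unimodular numbers, so they do cancel and $U_y^\ast T U_y = T$ follows after the Haar-preserving substitution; the inversion trick $x\mapsto x^{-1}$ (legitimate since $\Xi$ is abelian, hence unimodular and inversion-invariant) correctly swaps vectors and windows; and the separation-of-variables argument pinning down $c(\varphi_0,\varphi_1)=c\langle\varphi_1,\varphi_0\rangle$ with $c>0$ is fine, provided one treats the case $\langle\varphi_1,\varphi_0\rangle=0$ separately (choose $f=g\neq 0$ to force $c(\varphi_0,\varphi_1)=0$ there).

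The one step that fails as written is the boundedness of $\mathcal W_{\varphi_0}\colon\mathcal H\to L^2(\Xi)$, which you need before the form $Q$ has a bounded representing operator $T$ to which Schur's lemma can be applied. The intertwining only shows that $\mathcal W_{\varphi_0}(f)\in L^2(\Xi)$ for each $f$ in the dense subspace $\mathcal H_0=\operatorname{span}\set{U_y\varphi_0 : y \in \Xi}$; for a combination $f=\sum_i c_iU_{y_i}\varphi_0$ it yields only $\norm{\mathcal W_{\varphi_0}(f)}_{L^2}\leq\sum_i\abs{c_i}\,\norm{\mathcal W_{\varphi_0}(\varphi_0)}_{L^2}$, which is not controlled by $\norm{f}_{\mathcal H}$. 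Membership in $L^2$ on a dense subspace is not a norm bound, and without a norm bound ``extend by density'' is unavailable, so the argument is circular at this point. The standard repair (Duflo--Moore/Godement) is to note that $\mathcal W_{\varphi_0}$, with domain $\set{f\in\mathcal H:\mathcal W_{\varphi_0}(f)\in L^2(\Xi)}$, is a \emph{closed} densely defined operator: each evaluation $f\mapsto\langle f,U_x\varphi_0\rangle$ is continuous, so if $f_n\to f$ in $\mathcal H$ and $\mathcal W_{\varphi_0}(f_n)\to g$ in $L^2(\Xi)$, then $g=\mathcal W_{\varphi_0}(f)$ almost everywhere. The domain is $U$-invariant by the intertwining, so the positive self-adjoint operator $\mathcal W_{\varphi_0}^\ast\mathcal W_{\varphi_0}$ has spectral projections commuting with the irreducible representation and is therefore a finite scalar multiple of the identity by the unbounded version of Schur's lemma; in particular $\mathcal W_{\varphi_0}$ is bounded on all of $\mathcal H$. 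With that inserted, the rest of your proof goes through.
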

Note that by suitably normalizing the Haar measure $\lambda$ on $\Xi$, we can enforce $c = 1$ in the above lemma. We will always choose the normalization of the Haar measure so that this is satisfied and just write $\mathrm{d}x$ instead of $\mathrm{d}\lambda(x)$.

The previous lemma is just a reformulation of Godement's orthogonality relations, cf.\ \cite[Theorem 2.4]{Fulsche_Galke2025}. We set $\| f\|_{2, \varphi_0} := \| \mathcal W_{\varphi_0}(f)\|_{L^2(\Xi)}$. Furthermore, by the continuity properties of the representation, each element of $\mathcal W_{\varphi_0}(\mathcal H)$ is a bounded and continuous function, which means that we can evaluate the functions at specific points.

\begin{lemma} \label{lem:V_x}
    Let $f \in \mathcal H$ and $y \in \Xi$. Then, $\mathcal W_{\varphi_0}(U_x f)(y) = \frac{m(x,x^{-1})}{m(x^{-1},y)} \mathcal W_{\varphi_0}(f)(x^{-1}y)$.
\end{lemma}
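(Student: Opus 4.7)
The plan is to unwind the definition of the Wavelet transform and then move the representation from the first to the second argument of the inner product using the adjoint formula that was recorded after the first set of assumptions.

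First I would write
\[
\mathcal W_{\varphi_0}(U_x f)(y) = \langle U_x f, U_y \varphi_0 \rangle_{\mathcal H} = \langle f, U_x^\ast U_y \varphi_0 \rangle_{\mathcal H}
\]
and then substitute the identity $U_x^\ast = \overline{m(x,x^{-1})}\, U_{x^{-1}}$ recorded just after the first assumption block. Combined with the projective representation property $U_{x^{-1}} U_y = m(x^{-1},y)\, U_{x^{-1}y}$, this yields
\[
U_x^\ast U_y = \overline{m(x,x^{-1})}\, m(x^{-1},y)\, U_{x^{-1}y}.
\]

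Next, I would pull these two scalar factors out of the second slot of the inner product, being careful that this slot is conjugate linear, so $\overline{m(x,x^{-1})}$ becomes $m(x,x^{-1})$ and $m(x^{-1},y)$ becomes $\overline{m(x^{-1},y)}$. Since $m$ takes values in $S^1$, we have $\overline{m(x^{-1},y)} = m(x^{-1},y)^{-1}$, and therefore
\[
\mathcal W_{\varphi_0}(U_x f)(y) = \frac{m(x,x^{-1})}{m(x^{-1},y)} \langle f, U_{x^{-1}y} \varphi_0 \rangle_{\mathcal H} = \frac{m(x,x^{-1})}{m(x^{-1},y)}\, \mathcal W_{\varphi_0}(f)(x^{-1}y),
\]
which is the desired identity.

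I do not expect a genuine obstacle here: all three inputs (the definition of $\mathcal W_{\varphi_0}$, the adjoint formula $U_x^\ast = \overline{m(x,x^{-1})} U_{x^{-1}}$, and the cocycle relation) are already in hand, and the argument is a one-line computation. The only thing to watch is the conjugate-linearity convention in the second slot of $\langle \cdot, \cdot \rangle_{\mathcal H}$ and the fact that $|m| \equiv 1$, both of which are standard and explicitly available.
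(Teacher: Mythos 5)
Your proposal is correct and is essentially the same computation as the paper's proof: unwind the definition of $\mathcal W_{\varphi_0}$, apply $U_x^\ast = \overline{m(x,x^{-1})}\,U_{x^{-1}}$ together with the cocycle relation, and use $|m|\equiv 1$ to handle the conjugate-linear slot. The paper simply compresses these steps into a single displayed equation.
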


\begin{proof}
The statement follows from
\[\big[\mathcal W_{\varphi_0}(U_x f)\big](y) = \langle U_x f, U_y \varphi_0\rangle_{\mathcal H} = \frac{m(x,x^{-1})}{m(x^{-1},y)} \langle f, U_{x^{-1}y}\varphi_0\rangle_{\mathcal H}.\qedhere\]
\end{proof}

Motivated by this, we will set $V_x f(y) := \frac{m(x,x^{-1})}{m(x^{-1},y)}f(x^{-1}y)$ for functions $f: \Xi \to \mathbb C$. By this convention, the Wavelet transform intertwines $V_x$ and $U_x$. Clearly, $x \mapsto V_x$ is again a projective unitary respresentation on $L^2(\Xi)$ which is continuous in the strong operator topology and $V_xV_y = m(x,y)V_{xy}$ for all $x,y \in \Xi$. Moreover, $V_x$ is a surjective isometry on every $L^p(\Xi)$, $p \in [1,\infty]$, with
\[V_x^{-1} = \overline{m(x,x^{-1})}V_{x^{-1}}.\]

In the following, the twisted convolution of two functions $f, g: \Xi \to \mathbb C$ will be a useful operation. It is defined as
\begin{align*}
    (f \ast' g)(x) = \int_\Xi f(y) g(y^{-1}x)m(y,y^{-1}x) \, \mathrm{d}y.
\end{align*}
This twisted convolution is associative and satisfies the same Young-type inequalities as the standard convolution, simply because $|(f \ast' g)(x)| \leq (|f| \ast |g|)(x)$. The following identity will play an important role:
\begin{lemma} \label{lem:W_twisted_convolution}
    Let $\varphi_0, \varphi_1, \varphi_2, \varphi_3 \in \mathcal H$. Then, 
    \begin{align*}
        \mathcal W_{\varphi_0}(\varphi_3) \ast' \mathcal W_{\varphi_1}(\varphi_2) = \langle \varphi_2, \varphi_0\rangle_{\mathcal H} \mathcal W_{\varphi_1}(\varphi_3).
    \end{align*}
\end{lemma}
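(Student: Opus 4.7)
The plan is to expand the twisted convolution pointwise, use the intertwining relation from Lemma \ref{lem:V_x} to absorb the cocycle factor, and then reduce the resulting integral to Godement's orthogonality relation (Lemma \ref{lemma:WaveletTrafoIsometric}).

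First I would fix $x \in \Xi$ and unfold the definition of the twisted convolution to write
\[(\mathcal W_{\varphi_0}(\varphi_3) \ast' \mathcal W_{\varphi_1}(\varphi_2))(x) = \int_\Xi \langle \varphi_3, U_y\varphi_0\rangle_{\mathcal H} \, \mathcal W_{\varphi_1}(\varphi_2)(y^{-1}x) \, m(y,y^{-1}x) \, \mathrm{d}y.\]
Next I would apply Lemma \ref{lem:V_x} (with $f = \varphi_2$) to rewrite
\[\mathcal W_{\varphi_1}(\varphi_2)(y^{-1}x) = \frac{m(y^{-1},x)}{m(y,y^{-1})} \mathcal W_{\varphi_1}(U_y\varphi_2)(x).\]
The key simplification is then a cocycle computation: specializing the cocycle identity to $(y,y^{-1},x)$ and using $m(e,x)=1$ gives $m(y,y^{-1}) = m(y,y^{-1}x) m(y^{-1},x)$, so the three $m$-factors collapse and
\[\mathcal W_{\varphi_1}(\varphi_2)(y^{-1}x) \, m(y,y^{-1}x) = \mathcal W_{\varphi_1}(U_y\varphi_2)(x) = \langle U_y\varphi_2, U_x\varphi_1\rangle_{\mathcal H}.\]

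At this point the integral has been reduced to
\[\int_\Xi \langle \varphi_3, U_y\varphi_0\rangle_{\mathcal H} \, \overline{\langle U_x\varphi_1, U_y\varphi_2\rangle_{\mathcal H}} \, \mathrm{d}y = \int_\Xi \mathcal W_{\varphi_0}(\varphi_3)(y)\, \overline{\mathcal W_{\varphi_2}(U_x\varphi_1)(y)} \, \mathrm{d}y,\]
and applying Lemma \ref{lemma:WaveletTrafoIsometric} (with the Haar normalization $c=1$) yields $\langle \varphi_3, U_x\varphi_1\rangle_{\mathcal H} \langle \varphi_2, \varphi_0\rangle_{\mathcal H} = \langle \varphi_2,\varphi_0\rangle_{\mathcal H}\, \mathcal W_{\varphi_1}(\varphi_3)(x)$, which is precisely the claimed identity.

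I expect the main obstacle to be bookkeeping of the cocycle factors, since inner-product conjugations, the formula for $U_y^{-1}$ involving $\overline{m(y,y^{-1})}$, and the twist $m(y,y^{-1}x)$ sitting in the definition of $\ast'$ all interact. Routing everything through Lemma \ref{lem:V_x} (rather than directly manipulating $U_{y^{-1}x}$ via $U_y^{-1}U_x$) avoids introducing stray phase factors and isolates the one clean cocycle identity $m(y,y^{-1}) = m(y,y^{-1}x) m(y^{-1},x)$ that is needed. A minor side issue is justifying the pointwise evaluation and Fubini-type rearrangements: this requires the integrand to be in $L^1(\Xi)$, which is where the assumption that a sufficient number of the $\varphi_i$ lie in $\mathcal H_1$ (in particular, that $\mathcal W_{\varphi_0}(\varphi_3)$ and $\mathcal W_{\varphi_2}(U_x\varphi_1)$ are both in $L^2(\Xi)$) tacitly enters.
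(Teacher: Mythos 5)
Your proof is correct and follows essentially the same route as the paper: expand the twisted convolution, shift the cocycle factors onto $\langle U_y\varphi_2, U_x\varphi_1\rangle$ (the paper does this by hand via $U_{y^{-1}x} = \overline{m(y^{-1},x)}\,m(y,y^{-1})\,U_y^\ast U_x$, you via Lemma \ref{lem:V_x}, which encodes the same computation), collapse them with the identity $m(y,y^{-1}x)m(y^{-1},x) = m(y,y^{-1})$, and finish with Godement's orthogonality relation. Your closing caveat about integrability is also resolved exactly as you suspect: both factors lie in $L^2(\Xi)$ for arbitrary vectors in $\mathcal H$ by Lemma \ref{lemma:WaveletTrafoIsometric}, so no $\mathcal H_1$ assumption is needed.
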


\begin{proof}
    We have
    \begin{align*}
        \big[\mathcal W_{\varphi_0}(\varphi_3) \ast' \mathcal W_{\varphi_1}(\varphi_2)\big](x) &= \int_\Xi \langle \varphi_3, U_y \varphi_0\rangle_{\mathcal H} \langle \varphi_2, U_{y^{-1}x}\varphi_1\rangle_{\mathcal H} m(y,y^{-1}x) \, \mathrm{d}y
    \end{align*}
    and
    \begin{align*}
        \langle \varphi_2, U_{y^{-1}x} \varphi_1\rangle_{\mathcal H} &= m(y^{-1},x) \overline{m(y,y^{-1})}\langle U_y \varphi_2, U_x \varphi_1\rangle_{\mathcal H}\\
        &= m(y^{-1},x)\overline{m(y,y^{-1})} \overline{\langle U_x \varphi_1, U_y \varphi_2\rangle_{\mathcal H}},
    \end{align*}
    which yields
    \begin{align*}
        \big[\mathcal W_{\varphi_0}(\varphi_3) \ast' \mathcal W_{\varphi_1}(\varphi_2)\big](x) &= \int_\Xi \langle \varphi_3, U_y \varphi_0\rangle_{\mathcal H} \overline{\langle U_x \varphi_1, U_y \varphi_2\rangle_{\mathcal H}} \frac{m(y^{-1},x)m(y,y^{-1}x)}{m(y,y^{-1})}~\mathrm{d}y.
    \end{align*}
    The cocycle property of $m$ implies
    \[m(y,y^{-1}x)m(y^{-1},x) = m(e,x)m(y,y^{-1}) = m(y,y^{-1}),\]
    hence, using Lemma \ref{lemma:WaveletTrafoIsometric},
    \begin{align*}
        \big[\mathcal W_{\varphi_0}(\varphi_3) \ast' \mathcal W_{\varphi_1}(\varphi_2)\big](x) &= \int_\Xi \langle \varphi_3, U_y \varphi_0\rangle_{\mathcal H} \overline{\langle U_x \varphi_1, U_y \varphi_2\rangle_{\mathcal H}}~\mathrm{d}y
        = \langle \varphi_2, \varphi_0\rangle_{\mathcal H} \overline{\langle U_x \varphi_1, \varphi_3\rangle_{\mathcal H}}\\
        &= \langle \varphi_2, \varphi_0\rangle_{\mathcal H} \mathcal W_{\varphi_1}(\varphi_3)(x).
    \end{align*}
    This finishes the proof.
\end{proof}

\begin{lemma} \label{lem:orthogonal_projection}
    Let $0 \neq \varphi_0 \in \mathcal H_1$ and $f \in L^2(\Xi)$. Then $f \in \mathcal W_{\varphi_0}(\mathcal H)$ if and only if $f = \frac{1}{\| \varphi_0\|^2_{\mathcal H}}f \ast' \mathcal W_{\varphi_0}(\varphi_0)$. In addition, the orthogonal projection from $L^2(\Xi)$ to $\mathcal W_{\varphi_0}(\mathcal H)$ is given by $f \mapsto \frac{1}{\|\varphi_0\|_{\mathcal H}^2}f \ast' \mathcal W_{\varphi_0}(\varphi_0)$.
\end{lemma}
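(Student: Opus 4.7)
The plan is to dispatch the ``only if'' direction by a direct application of Lemma \ref{lem:W_twisted_convolution}, and then prove the claim about the orthogonal projection, from which the ``if'' direction falls out immediately. For ``only if'': if $f = \mathcal W_{\varphi_0}(g)$ for some $g \in \mathcal H$, then Lemma \ref{lem:W_twisted_convolution} applied with $\varphi_1 = \varphi_2 = \varphi_0$ and $\varphi_3 = g$ yields $f \ast' \mathcal W_{\varphi_0}(\varphi_0) = \|\varphi_0\|_{\mathcal H}^2 f$ directly.

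The main step I would establish next is the pointwise identity
\[(f \ast' \mathcal W_{\varphi_0}(\varphi_0))(x) = \langle f, \mathcal W_{\varphi_0}(U_x \varphi_0)\rangle_{L^2(\Xi)}\]
valid for every $f \in L^2(\Xi)$ and $x \in \Xi$. I would obtain this by first rewriting the integrand $m(y,y^{-1}x)\mathcal W_{\varphi_0}(\varphi_0)(y^{-1}x)$ appearing in the twisted convolution as $V_y\mathcal W_{\varphi_0}(\varphi_0)(x)$, which follows from the definition of $V_y$ together with the short cocycle computation $m(y,y^{-1}x)m(y^{-1},x)=m(y,y^{-1})$ (an instance of the defining relation with $yy^{-1}=e$ and $m(e,\cdot)=1$). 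Invoking Lemma \ref{lem:V_x} then converts this into $\mathcal W_{\varphi_0}(U_y\varphi_0)(x)$, and finally the elementary conjugation $\mathcal W_{\varphi_0}(U_y\varphi_0)(x) = \overline{\mathcal W_{\varphi_0}(U_x\varphi_0)(y)}$ produces the desired inner product. Cauchy--Schwarz together with Lemma \ref{lemma:WaveletTrafoIsometric} ensures absolute convergence of the integral for each fixed $x$.

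With the identity in hand, let $P$ be the orthogonal projection onto the closed subspace $\mathcal W_{\varphi_0}(\mathcal H) \subseteq L^2(\Xi)$ (closedness is immediate from Lemma \ref{lemma:WaveletTrafoIsometric}). For any $f \in L^2(\Xi)$, split $f = Pf + (I-P)f$. The identity shows that $\big[(I-P)f \ast' \mathcal W_{\varphi_0}(\varphi_0)\big](x) = \langle (I-P)f, \mathcal W_{\varphi_0}(U_x\varphi_0)\rangle_{L^2(\Xi)} = 0$ for every $x$, since $\mathcal W_{\varphi_0}(U_x\varphi_0) \in \mathcal W_{\varphi_0}(\mathcal H)$. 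Applying the ``only if'' direction to $Pf$ yields $f \ast' \mathcal W_{\varphi_0}(\varphi_0) = Pf \ast' \mathcal W_{\varphi_0}(\varphi_0) = \|\varphi_0\|_{\mathcal H}^2 Pf$, which is exactly the projection formula. The ``if'' direction of the equivalence is then automatic: if $f$ satisfies $f = \frac{1}{\|\varphi_0\|_{\mathcal H}^2}f \ast' \mathcal W_{\varphi_0}(\varphi_0)$, then $f = Pf \in \mathcal W_{\varphi_0}(\mathcal H)$.

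The only real obstacle I expect is the cocycle bookkeeping required for the pointwise identity; once that is in place, the rest is a clean orthogonal-decomposition argument, which has the pleasant feature of yielding the projection formula as a byproduct rather than requiring one to verify idempotency and self-adjointness of the candidate projection operator separately.
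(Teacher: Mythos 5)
Your proof is correct and follows essentially the same route as the paper: both hinge on the pointwise identity $(f\ast'\mathcal W_{\varphi_0}(\varphi_0))(x) = \langle f, \mathcal W_{\varphi_0}(U_x\varphi_0)\rangle_{L^2(\Xi)}$, obtained from Lemma \ref{lem:V_x} together with the cocycle relation $m(y,y^{-1}x)m(y^{-1},x)=m(y,y^{-1})$. The only cosmetic difference is that the paper recognizes this expression as $\big[\mathcal W_{\varphi_0}\mathcal W_{\varphi_0}^\ast(f)\big](x)$ and quotes the fact that $\frac{1}{\|\varphi_0\|_{\mathcal H}^2}\mathcal W_{\varphi_0}\mathcal W_{\varphi_0}^\ast$ is the orthogonal projection onto the range of the isometry $\mathcal W_{\varphi_0}/\|\varphi_0\|_{\mathcal H}$, whereas you verify the projection property by hand via the decomposition $f = Pf + (I-P)f$.
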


\begin{proof}
    Let $g \in \mathcal H$. Then, by Lemma \ref{lem:W_twisted_convolution}, $\mathcal W_{\varphi_0}(g)$ clearly satisfies
    \begin{align*}
        \mathcal W_{\varphi_0}(g) = \frac{1}{\| \varphi_0\|_{\mathcal H}^2} \mathcal W_{\varphi_0}(g) \ast' \mathcal W_{\varphi_0}(\varphi_0).
    \end{align*}
    For the reverse statement, observe that by Lemma \ref{lemma:WaveletTrafoIsometric} (recalling that we normalized the Haar measure on $\Xi$ so that $c = 1$ in that lemma), the map $\mathcal W_{\varphi_0}/\| \varphi_0\|_{\mathcal H} \from \mathcal H \to L^2(\Xi)$ is an isometry and therefore has closed range. Hence, there exists an orthogonal projection from $L^2(\Xi)$ to $\mathcal W_{\varphi_0}(\mathcal H)$, which is given by $\frac{1}{\| \varphi_0\|_{\mathcal H}^2} \mathcal W_{\varphi_0}\mathcal W_{\varphi_0}^\ast$. Now, for $f \in L^2(\Xi)$ we have
    \[\big[\mathcal W_{\varphi_0} \mathcal W_{\varphi_0}^\ast(f)\big](x) =  \langle \mathcal W_{\varphi_0}^\ast f, U_x \varphi_0\rangle_{\mathcal H} = \langle f, \mathcal W_{\varphi_0} U_x \varphi_0\rangle_{L^2(\Xi)}.\]
    From Lemma \ref{lem:V_x} we know that
    \begin{align*}
        \big[\mathcal W_{\varphi_0} U_x \varphi_0\big](y) &= m(x,x^{-1}) \overline{m(x^{-1},y)}\big[\mathcal W_{\varphi_0}(\varphi_0)\big](x^{-1}y).
    \end{align*}
    Therefore,
    \begin{align*}
        \big[\mathcal W_{\varphi_0} \mathcal W_{\varphi_0}^\ast(f)\big](x) &= \int_\Xi f(y) \overline{m(x,x^{-1})}m(x^{-1},y) \overline{\big[\mathcal W_{\varphi_0}(\varphi_0)\big](x^{-1}y)} \, \mathrm{d}y
    \end{align*}
    Using the formula $\overline{\big[\mathcal W_{\varphi_0}(\varphi_0)\big](t)} = m(t,t^{-1}) \big[\mathcal W_{\varphi_0}(\varphi_0)\big](t^{-1})$, we arrive at
    \begin{align*}
       \frac{1}{\| \varphi_0\|_{\mathcal H}^2} \big[\mathcal W_{\varphi_0} \mathcal W_{\varphi_0}^\ast(f)\big](x) = \frac{1}{\| \varphi_0\|_{\mathcal H}^2}  \int_\Xi f(y) \big[\mathcal W_{\varphi_0}(\varphi_0)\big](y^{-1}x) \frac{m(x^{-1},y)m(x^{-1}y,y^{-1}x)}{m(x,x^{-1})} \, \mathrm{d}y.
    \end{align*}
    Using
    \begin{equation} \label{eq:some_cocycle_equation}
        m(x^{-1}y,y^{-1}x)m(x^{-1},y) = m(x^{-1},x)m(y,y^{-1}x) = m(x,x^{-1})m(y,y^{-1}x)
    \end{equation}
    finishes the proof.
\end{proof}

\begin{corollary}
   Consider $\mathcal W_{\varphi_0}(\mathcal H)$ as a Hilbert space, endowed with the inner product inherited from $L^2(\Xi)$. Then, it is a reproducing kernel Hilbert space with kernel
    \begin{align*}
        K_x(y) = \frac{1}{\| \varphi_0\|_{\mathcal H}^2} \big[\mathcal W_{\varphi_0}(\varphi_0)\big](x^{-1}y) \frac{m(x,x^{-1})}{m(x^{-1},y)} = \frac{1}{\| \varphi_0\|_{\mathcal H}^2}  \big[\mathcal W_{\varphi_0}(U_x \varphi_0)\big](y).
    \end{align*}
\end{corollary}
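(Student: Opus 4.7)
The plan is to derive the reproducing property directly from Lemma \ref{lem:orthogonal_projection}, where the essentially identical cocycle bookkeeping has already been done. If $f \in \mathcal{W}_{\varphi_0}(\mathcal{H})$, then that lemma gives the pointwise identity
\[f(x) = \frac{1}{\|\varphi_0\|_{\mathcal{H}}^2}(f \ast' \mathcal{W}_{\varphi_0}(\varphi_0))(x) = \frac{1}{\|\varphi_0\|_{\mathcal{H}}^2}\int_\Xi f(y)\,[\mathcal{W}_{\varphi_0}(\varphi_0)](y^{-1}x)\, m(y,y^{-1}x)\,\mathrm{d}y.\]
To recognize this as the $L^2(\Xi)$ inner product $\langle f, K_x\rangle_{L^2(\Xi)} = \int_\Xi f(y)\overline{K_x(y)}\,\mathrm{d}y$, it suffices to verify
\[\overline{K_x(y)} = \frac{1}{\|\varphi_0\|_{\mathcal{H}}^2}[\mathcal{W}_{\varphi_0}(\varphi_0)](y^{-1}x)\, m(y,y^{-1}x).\]

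To prove this equality, I would conjugate the first stated form of $K_x(y)$. Using $|m|=1$ (so $\overline{m(z,w)} = m(z,w)^{-1}$) together with the identity $\overline{[\mathcal{W}_{\varphi_0}(\varphi_0)](t)} = m(t,t^{-1})[\mathcal{W}_{\varphi_0}(\varphi_0)](t^{-1})$ applied at $t = x^{-1}y$, the conjugate kernel rewrites as
\[\overline{K_x(y)} = \frac{1}{\|\varphi_0\|_{\mathcal{H}}^2}\,\frac{m(x^{-1}y,y^{-1}x)\, m(x^{-1},y)}{m(x,x^{-1})}\,[\mathcal{W}_{\varphi_0}(\varphi_0)](y^{-1}x).\]
Then equation \eqref{eq:some_cocycle_equation} collapses the cocycle factor to exactly $m(y,y^{-1}x)$, matching the target.

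For the second presentation of $K_x$ in the statement, I would apply Lemma \ref{lem:V_x} with $f = \varphi_0$, which directly yields
\[[\mathcal{W}_{\varphi_0}(U_x\varphi_0)](y) = \frac{m(x,x^{-1})}{m(x^{-1},y)}[\mathcal{W}_{\varphi_0}(\varphi_0)](x^{-1}y),\]
so both expressions for $K_x(y)$ coincide. Before stating the reproducing property, one should note that $\mathcal{W}_{\varphi_0}(\mathcal{H})$ is indeed a Hilbert space in its own right, which is the content of Lemma \ref{lemma:WaveletTrafoIsometric} (isometry) combined with Lemma \ref{lem:orthogonal_projection} (closedness in $L^2(\Xi)$), and that point evaluation is meaningful since elements of $\mathcal{W}_{\varphi_0}(\mathcal{H})$ are continuous by the strong continuity of $U$. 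There is no genuine obstacle here; the proof is essentially a rerun of the final cocycle manipulation in the proof of Lemma \ref{lem:orthogonal_projection}, and the only mild bookkeeping point is keeping track of which cocycle factors pick up a conjugate versus which do not.
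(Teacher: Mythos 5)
Your proof is correct and follows essentially the same route as the paper: the paper states this corollary without proof as an immediate consequence of Lemma \ref{lem:orthogonal_projection}, whose proof already contains the identity $\big[\mathcal W_{\varphi_0}\mathcal W_{\varphi_0}^\ast(f)\big](x) = \langle f, \mathcal W_{\varphi_0}U_x\varphi_0\rangle_{L^2(\Xi)}$ together with exactly the cocycle manipulation via \eqref{eq:some_cocycle_equation} that you carry out in reverse. Your additional remarks on closedness of the range and continuity of its elements are accurate and complete the verification.
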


We now consider $\Co_1(U) := \{ f \in \mathcal H: ~\mathcal W_{\varphi_0}(f) \in L^1(\Xi)\}$, which turns into a normed space when endowed with the norm
\begin{align*}
    \| f\|_{1, \varphi_0} := \| \mathcal W_{\varphi_0}(f)\|_{L^1(\Xi)}.
\end{align*}
The space $\Co_1(U)$ is one of the coorbit spaces in which we are interested. The general definition of a coorbit space will follow after some basic results. A priori, this space may depend on the choice of $\varphi_0$. But this is actually not the case.

\begin{proposition}
    Let $0 \neq \varphi_0, \varphi_1 \in \mathcal H_1$ and $f \in \mathcal H$ be such that $\mathcal W_{\varphi_0}(f) \in L^1(\Xi)$. Then, $\mathcal W_{\varphi_1}(f) \in L^1(\Xi)$ with $\| \mathcal W_{\varphi_1}(f)\|_{L^1(\Xi)} \leq C \| \mathcal W_{\varphi_0}(f)\|_{L^1(\Xi)}$, where the constant $C$ does not depend on $f$. 
\end{proposition}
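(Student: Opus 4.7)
The plan is to derive a change-of-window identity via Lemma~\ref{lem:W_twisted_convolution} and then apply Young's inequality for twisted convolution. Specifically, taking $\varphi_2 := \varphi_0$ and $\varphi_3 := f$ in Lemma~\ref{lem:W_twisted_convolution} (and keeping the given $\varphi_0, \varphi_1$ in their roles) produces the reproducing identity
$$\mathcal W_{\varphi_0}(f) \ast' \mathcal W_{\varphi_1}(\varphi_0) = \| \varphi_0\|_{\mathcal H}^2 \, \mathcal W_{\varphi_1}(f).$$
Since the twisted convolution is dominated in absolute value by the ordinary convolution, the scalar Young inequality $L^1 \ast L^1 \subset L^1$ then yields
$$\| \mathcal W_{\varphi_1}(f)\|_{L^1(\Xi)} \leq \frac{\| \mathcal W_{\varphi_1}(\varphi_0)\|_{L^1(\Xi)}}{\| \varphi_0\|_{\mathcal H}^2} \, \| \mathcal W_{\varphi_0}(f)\|_{L^1(\Xi)},$$
so that $C := \| \mathcal W_{\varphi_1}(\varphi_0)\|_{L^1(\Xi)} / \| \varphi_0\|_{\mathcal H}^2$, which is manifestly independent of $f$, would close the argument \emph{provided} $\mathcal W_{\varphi_1}(\varphi_0) \in L^1(\Xi)$ can itself be established.

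Verifying this integrability of the off-diagonal Wavelet transform is the main obstacle, since the hypothesis $\varphi_0, \varphi_1 \in \mathcal H_1$ only delivers $L^1$-control for the diagonal quantities $\mathcal W_{\varphi_j}(\varphi_j)$. I would therefore invoke polarization for the sesquilinear form $(a, b) \mapsto \langle a, U_x b\rangle_{\mathcal H}$, yielding
$$\mathcal W_{\varphi_1}(\varphi_0)(x) = \frac{1}{4} \sum_{k=0}^{3} i^{-k} \, \mathcal W_{\varphi_0 + i^{k} \varphi_1}(\varphi_0 + i^{k} \varphi_1)(x),$$
which rewrites $\mathcal W_{\varphi_1}(\varphi_0)$ as a linear combination of diagonal Wavelet transforms. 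Each summand lies in $L^1(\Xi)$ as soon as the combination $\varphi_0 + i^{k} \varphi_1$ itself belongs to $\mathcal H_1$; this is precisely the subspace stability of $\mathcal H_1$ flagged in Remark~\ref{rem:one}(d), and so must be established as a separate input to the proof. Once that stability is in hand, the polarization gives $\mathcal W_{\varphi_1}(\varphi_0) \in L^1(\Xi)$, justifying the constant $C$ above and completing the argument.
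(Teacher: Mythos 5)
Your first step is sound and matches the paper's strategy in spirit: apply Lemma~\ref{lem:W_twisted_convolution} to write $\mathcal W_{\varphi_1}(f)$ as a twisted convolution of $\mathcal W_{\varphi_0}(f)$ with a fixed $L^1$ function and then use Young's inequality. The gap is in the second half. You need $\mathcal W_{\varphi_1}(\varphi_0) \in L^1(\Xi)$, and the polarization route you propose does not close: each diagonal term $\mathcal W_{\varphi_0 + i^k\varphi_1}(\varphi_0 + i^k\varphi_1)$ lies in $L^1(\Xi)$ only if $\varphi_0 + i^k\varphi_1 \in \mathcal H_1$, i.e.\ only if $\mathcal H_1$ is closed under linear combinations. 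But expanding $\langle \varphi_0 + i^k\varphi_1, U_x(\varphi_0 + i^k\varphi_1)\rangle_{\mathcal H}$ shows that the subspace property of $\mathcal H_1$ is \emph{equivalent} to the integrability of the cross terms $x \mapsto \langle \varphi_0, U_x\varphi_1\rangle_{\mathcal H}$, which is exactly what you are trying to establish; moreover, $\mathcal W_{\varphi_1}(\varphi_0) \in L^1(\Xi)$ is itself the special case $f = \varphi_0$ of the proposition. So the ``separate input'' you defer to is circular within the paper's development (the subspace property is only proved later, as a consequence of this proposition), and no independent argument for it is given.

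The paper sidesteps the off-diagonal issue entirely: instead of $\varphi_2 = \varphi_0$, it takes $\varphi_2 = U_x\varphi_1$ in Lemma~\ref{lem:W_twisted_convolution}, where $x \in \Xi$ is chosen (using irreducibility of the representation) so that $\langle U_x\varphi_1, \varphi_0\rangle_{\mathcal H} \neq 0$. This yields
\begin{align*}
    \mathcal W_{\varphi_0}(f) \ast' \mathcal W_{\varphi_1}(U_x\varphi_1) = \langle U_x\varphi_1, \varphi_0\rangle_{\mathcal H}\, \mathcal W_{\varphi_1}(f),
\end{align*}
and by Lemma~\ref{lem:V_x} the function $\mathcal W_{\varphi_1}(U_x\varphi_1)$ is, in absolute value, just a translate of the \emph{diagonal} quantity $\mathcal W_{\varphi_1}(\varphi_1)$, hence in $L^1(\Xi)$ directly from $\varphi_1 \in \mathcal H_1$. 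If you want to salvage your version, you would have to prove $\mathcal W_{\varphi_1}(\varphi_0) \in L^1(\Xi)$ by some such device first; as written, the argument assumes what it needs to prove. (A minor additional point: with the paper's convention that the inner product is antilinear in the second slot, your polarization coefficients recover $\langle \varphi_1, U_x\varphi_0\rangle_{\mathcal H} = \mathcal W_{\varphi_0}(\varphi_1)$ rather than $\mathcal W_{\varphi_1}(\varphi_0)$, but this is cosmetic compared to the circularity.)
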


\begin{proof}
Let $x \in \Xi$ be such that $\langle \varphi_0, U_x \varphi_1 \rangle_{\mathcal H} \neq 0$ (such $x$ always exists, because the representation $(U_x)_{x \in \Xi}$ is irreducible, hence every non-zero vector is cyclic). Then, by Lemma \ref{lem:W_twisted_convolution} and Young's inequality,
\begin{align*}
        \|\mathcal W_{\varphi_1}(f)\|_{L^1(\Xi)} &= \frac{1}{|\langle U_x \varphi_1 , \varphi_0 \rangle_{\mathcal H}|} \| \mathcal W_{\varphi_0}(f) \ast' \mathcal W_{\varphi_1}(U_x\varphi_1)\|_{L^1(\Xi)}\\
        &\leq \frac{1}{|\langle U_x \varphi_1 , \varphi_0 \rangle_{\mathcal H}|} \| \mathcal W_{\varphi_0}(f)\|_{L^1(\Xi)} \| \mathcal W_{\varphi_1}(U_x \varphi_1)\|_{L^1(\Xi)}\\
        &= \frac{1}{|\langle U_x \varphi_1 , \varphi_0 \rangle_{\mathcal H}|} \| \mathcal W_{\varphi_0}(f)\|_{L^1(\Xi)} \| \mathcal W_{\varphi_1}(\varphi_1)\|_{L^1(\Xi)},
    \end{align*}
    where the last equality follows from Lemma \ref{lem:V_x}. This finishes the proof.
\end{proof}

As an immediate consequence, we see that the space $\Co_1(U)$ does not depend on the choice of the window function $\varphi_0$, and different window functions yield equivalent norms. Note that we clearly have $\mathcal H_1 \subseteq \Co_1(U)$ by the previous result. Indeed, both sets are equal:

\begin{proposition}
    We have $\mathcal H_1 = \Co_1(U)$. In particular, $\mathcal H_1$ is a subspace of $\mathcal H$.
\end{proposition}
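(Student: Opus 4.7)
My plan is to prove the two inclusions $\mathcal{H}_1 \subseteq \Co_1(U)$ and $\Co_1(U) \subseteq \mathcal{H}_1$ separately. Once this is done, the statement that $\mathcal{H}_1$ is a subspace of $\mathcal{H}$ is immediate: since $\mathcal{W}_{\varphi_0}$ is linear in its argument, $\Co_1(U) = \mathcal{W}_{\varphi_0}^{-1}(L^1(\Xi))$ is a linear subspace of $\mathcal{H}$.

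The inclusion $\mathcal{H}_1 \subseteq \Co_1(U)$ I would obtain directly from the window-independence encoded in the preceding proposition. Let $f \in \mathcal{H}_1 \setminus \{0\}$ (the zero vector lies trivially in both sets). By the definition of $\mathcal{H}_1$ one has $\mathcal{W}_f(f) \in L^1(\Xi)$. Since $f$ itself is a nonzero element of $\mathcal{H}_1$ and therefore qualifies as a window, applying the preceding proposition with old window $f$ and new window $\varphi_0$ yields $\mathcal{W}_{\varphi_0}(f) \in L^1(\Xi)$, i.e., $f \in \Co_1(U)$.

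For the reverse inclusion, let $f \in \Co_1(U)$, so that $\mathcal{W}_{\varphi_0}(f) \in L^1(\Xi)$. The idea is to realize $\mathcal{W}_f(f)$ as a twisted convolution of two $L^1$-functions and apply the Young-type inequality for $\ast'$. Lemma \ref{lem:W_twisted_convolution} with the substitutions $\varphi_3 \to f$, $\varphi_1 \to f$, $\varphi_2 \to \varphi_0$ gives
\[
\mathcal{W}_{\varphi_0}(f) \ast' \mathcal{W}_f(\varphi_0) = \| \varphi_0\|_{\mathcal{H}}^2 \, \mathcal{W}_f(f),
\]
so it remains to verify that $\mathcal{W}_f(\varphi_0) \in L^1(\Xi)$. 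Using $U_x^\ast = \overline{m(x,x^{-1})}\,U_{x^{-1}}$ (recalled after Assumption~1), a direct computation yields
\[
\mathcal{W}_f(\varphi_0)(x) = \langle \varphi_0, U_x f\rangle_{\mathcal{H}} = \overline{m(x,x^{-1})}\, \overline{\mathcal{W}_{\varphi_0}(f)(x^{-1})},
\]
whence $|\mathcal{W}_f(\varphi_0)(x)| = |\mathcal{W}_{\varphi_0}(f)(x^{-1})|$. Since $\Xi$ is abelian and hence unimodular, its Haar measure is inversion-invariant, and so $\|\mathcal{W}_f(\varphi_0)\|_{L^1(\Xi)} = \|\mathcal{W}_{\varphi_0}(f)\|_{L^1(\Xi)} < \infty$. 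Young's inequality then forces $\mathcal{W}_f(f) \in L^1(\Xi)$, which is precisely $f \in \mathcal{H}_1$.

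The only non-routine observation is the symmetric identity $|\mathcal{W}_f(\varphi_0)(x)| = |\mathcal{W}_{\varphi_0}(f)(x^{-1})|$, which converts the hypothesized integrability of one wavelet transform into that of the swapped one and thus feeds Young's inequality. Everything else is a direct application of the preceding proposition and Lemma \ref{lem:W_twisted_convolution}, so I do not anticipate a serious obstacle.
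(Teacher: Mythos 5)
Your proposal is correct and follows essentially the same route as the paper: the forward inclusion via the window-independence proposition, and the reverse inclusion by writing $\|\varphi_0\|_{\mathcal H}^2\,\mathcal W_f(f) = \mathcal W_{\varphi_0}(f) \ast' \mathcal W_f(\varphi_0)$ via Lemma \ref{lem:W_twisted_convolution}, observing $|\mathcal W_f(\varphi_0)(x)| = |\mathcal W_{\varphi_0}(f)(x^{-1})|$, and applying Young's inequality. The only difference is that the paper records the inclusion $\mathcal H_1 \subseteq \Co_1(U)$ in the sentence preceding the proposition rather than inside the proof.
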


\begin{proof}
    We need to prove the following: If $f \in \mathcal H$ such that there exists $\varphi_0 \in \mathcal H$ with $\mathcal W_{\varphi_0}(\varphi_0) \in L^1(\Xi)$ and $\mathcal W_{\varphi_0}(f) \in L^1(\Xi)$, then $\mathcal W_f(f) \in L^1(\Xi)$. Indeed, for every $\varphi_0 \in \mathcal H_1 \setminus \set{0}$ Lemma \ref{lem:W_twisted_convolution} implies
    \begin{align*}
        \mathcal W_f f = \frac{1}{\| \varphi_0\|_{\mathcal H}^2} \mathcal W_{\varphi_0}(f) \ast' \mathcal W_f (\varphi_0).
    \end{align*}
    Since 
    \begin{align*}
        |\big[\mathcal W_f (\varphi_0)\big](x)| &= |\langle \varphi_0, U_x f\rangle_{\mathcal H}| = |\langle U_{x^{-1}} \varphi_0, f\rangle_{\mathcal H}| = |\big[\mathcal W_{\varphi_0}(f)\big](x^{-1})|,
    \end{align*}
    we see that $\mathcal W_f(\varphi_0) \in L^1(\Xi)$ such that $\mathcal W_f(f)$ can be written as the twisted convolution of two $L^1$ functions, hence is itself contained in $L^1(\Xi)$. 
\end{proof}

Note the following simple lemmata.

\begin{lemma}\label{lem:embeddingH1}
    The embedding $(\Co_1(U), \| \cdot\|_{1, \varphi_0}) \hookrightarrow \mathcal H$ is continuous with
    \[\| f\|_{\mathcal H} \| \varphi_0\|_{\mathcal H} \leq \| f\|_{1, \varphi_0}.\] 
\end{lemma}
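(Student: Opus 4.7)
The strategy is to combine an $L^2$ identity coming from the orthogonality relation with a pointwise bound, and sandwich the $L^1$ norm between them via the elementary inequality $\|g\|_{L^2}^2 \leq \|g\|_{L^\infty} \|g\|_{L^1}$.

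First I would apply Lemma \ref{lemma:WaveletTrafoIsometric} with $f = g$ and $\varphi_0 = \varphi_1$. Recalling our normalization of the Haar measure (which made the constant $c$ equal to $1$), this yields the isometry-type identity
\[\|\mathcal W_{\varphi_0}(f)\|_{L^2(\Xi)} = \|f\|_{\mathcal H}\,\|\varphi_0\|_{\mathcal H}.\]
Next, the Cauchy--Schwarz inequality in $\mathcal H$ together with the unitarity of $U_x$ immediately gives the pointwise estimate
\[\bigl|[\mathcal W_{\varphi_0}(f)](x)\bigr| = |\langle f, U_x \varphi_0\rangle_{\mathcal H}| \leq \|f\|_{\mathcal H}\,\|\varphi_0\|_{\mathcal H},\]
so $\mathcal W_{\varphi_0}(f) \in L^\infty(\Xi)$ with $\|\mathcal W_{\varphi_0}(f)\|_{L^\infty(\Xi)} \leq \|f\|_{\mathcal H}\,\|\varphi_0\|_{\mathcal H}$.

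Finally, I would use the elementary bound $\|g\|_{L^2(\Xi)}^2 \leq \|g\|_{L^\infty(\Xi)} \|g\|_{L^1(\Xi)}$ (valid because $|g|^2 \leq \|g\|_{L^\infty} |g|$ pointwise) applied to $g = \mathcal W_{\varphi_0}(f)$. Combining with the two estimates above,
\[\|f\|_{\mathcal H}^2\,\|\varphi_0\|_{\mathcal H}^2 = \|\mathcal W_{\varphi_0}(f)\|_{L^2(\Xi)}^2 \leq \|\mathcal W_{\varphi_0}(f)\|_{L^\infty(\Xi)}\,\|\mathcal W_{\varphi_0}(f)\|_{L^1(\Xi)} \leq \|f\|_{\mathcal H}\,\|\varphi_0\|_{\mathcal H}\,\|f\|_{1, \varphi_0}.\]
Dividing by $\|f\|_{\mathcal H}\,\|\varphi_0\|_{\mathcal H}$ (the case $f = 0$ being trivial, and $\varphi_0 \neq 0$ by assumption) gives the claimed inequality, which in turn establishes continuity of the embedding. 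There is no real obstacle here; the only thing to be a bit careful about is to invoke the correct normalization of the Haar measure established after Lemma \ref{lemma:WaveletTrafoIsometric}, so that no stray constant appears in the final bound.
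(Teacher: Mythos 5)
Your proof is correct and is essentially identical to the one in the paper: both use the Godement orthogonality relation (Lemma \ref{lemma:WaveletTrafoIsometric}) with the normalization $c=1$ to write $\|f\|_{\mathcal H}^2\|\varphi_0\|_{\mathcal H}^2$ as $\|\mathcal W_{\varphi_0}(f)\|_{L^2}^2$, then bound this by $\|\mathcal W_{\varphi_0}(f)\|_{L^\infty}\|\mathcal W_{\varphi_0}(f)\|_{L^1}$ and apply Cauchy--Schwarz to the sup. No issues.
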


\begin{proof}
    Again by Godement's orthogonality relation (Lemma \ref{lemma:WaveletTrafoIsometric}),
    \begin{align*}
        \| f\|_{\mathcal H}^2 \| \varphi_0\|_{\mathcal H}^2 &= \int_\Xi |\langle f, U_x \varphi_0\rangle_{\mathcal H}|^2 \, \mathrm{d}x \leq \sup_{x \in \Xi} |\langle f, U_x \varphi_0\rangle_{\mathcal H}| \int_\Xi |\langle f, U_x \varphi_0\rangle_{\mathcal H}| \, \mathrm{d}x\\
        &\leq \| f\|_{\mathcal H} \| \varphi_0\|_{\mathcal H} \| f\|_{1, \varphi_0}.
    \end{align*}
    These estimates provide the continuity of the embedding.
\end{proof}

\begin{lemma} \label{lem:Co_1_Banach_space}
Let $0 \neq \varphi_0 \in \mathcal H_1$. 
    \begin{itemize}
        \item[(i)] $L^1(\Xi) \ast' \mathcal W_{\varphi_0}(\varphi_0)$ is a closed subspace of $L^1(\Xi)$. 
        \item[(ii)] $\mathcal W_{\varphi_0}(\Co_1(U)) = L^1(\Xi) \ast' \mathcal W_{\varphi_0}(\varphi_0)$. In particular, $\Co_1(U)$ is a Banach space.
        \item[(iii)] A function $f\in L^1(\Xi)$ is contained in $\mathcal W_{\varphi_0}(\Co_1(U))$ if and only if $f = \frac{1}{\| \varphi_0\|_{\mathcal H}^2} f \ast' \mathcal W_{\varphi_0}(\varphi_0)$.
        \item[(iv)] The map $f \mapsto \frac{1}{\| \varphi_0\|_{\mathcal H}^2} f \ast' \mathcal W_{\varphi_0}(\varphi_0)$ is a continuous projection from $L^1(\Xi)$ onto the space $\mathcal W_{\varphi_0}(\Co_1(U))$.
    \end{itemize}
\end{lemma}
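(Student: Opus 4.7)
The proof revolves around the reproducing identity that follows from Lemma \ref{lem:W_twisted_convolution} with $\varphi_1 = \varphi_2 = \varphi_0$, namely
\[\mathcal W_{\varphi_0}(g) \ast' \mathcal W_{\varphi_0}(\varphi_0) = \|\varphi_0\|_{\mathcal H}^2 \, \mathcal W_{\varphi_0}(g), \qquad g \in \mathcal H.\]
The plan is to first set $P(f) := \|\varphi_0\|_{\mathcal H}^{-2} f \ast' \mathcal W_{\varphi_0}(\varphi_0)$ on $L^1(\Xi)$. The pointwise bound $|f \ast' g| \leq |f| \ast |g|$ together with Young's inequality shows $P$ is bounded, and the associativity of $\ast'$ combined with the displayed identity at $g = \varphi_0$ gives $P^2 = P$. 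Hence $P$ is a continuous projection whose range is precisely $L^1(\Xi) \ast' \mathcal W_{\varphi_0}(\varphi_0)$. Since the range of a bounded projection on a Banach space is closed, this settles (i).

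The core of the argument is (ii). The inclusion $\mathcal W_{\varphi_0}(\Co_1(U)) \subseteq L^1(\Xi) \ast' \mathcal W_{\varphi_0}(\varphi_0)$ is immediate from the displayed identity. For the reverse inclusion I would introduce the ``inverse wavelet transform''
\[\mathcal W_{\varphi_0}^\ast F := \int_\Xi F(x) \, U_x \varphi_0 \, \mathrm{d}x, \qquad F \in L^1(\Xi),\]
defined weakly: the estimate $|\langle U_x\varphi_0, \psi\rangle_{\mathcal H}| \leq \|\varphi_0\|_{\mathcal H}\|\psi\|_{\mathcal H}$ produces a bounded antilinear functional on $\mathcal H$, which via Riesz yields $\mathcal W_{\varphi_0}^\ast F \in \mathcal H$ of norm at most $\|\varphi_0\|_{\mathcal H}\|F\|_{L^1(\Xi)}$. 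The decisive step is the computation
\[\mathcal W_{\varphi_0}(\mathcal W_{\varphi_0}^\ast F) = F \ast' \mathcal W_{\varphi_0}(\varphi_0),\]
which reduces to the pointwise identity $\langle U_x\varphi_0, U_y\varphi_0\rangle_{\mathcal H} = m(x, x^{-1}y)\, \mathcal W_{\varphi_0}(\varphi_0)(x^{-1}y)$. This identity follows from $U_x^\ast = \overline{m(x,x^{-1})}\, U_{x^{-1}}$ together with the cocycle relation $m(x,x^{-1}) = m(x,x^{-1}y)\, m(x^{-1},y)$, which one derives from the cocycle identity with the triple $(x, x^{-1}, y)$ and $m(e,y) = 1$. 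Young's inequality then places $\mathcal W_{\varphi_0}(\mathcal W_{\varphi_0}^\ast F)$ in $L^1(\Xi)$, so $\mathcal W_{\varphi_0}^\ast F \in \Co_1(U)$, which closes the reverse inclusion. The fact that $\Co_1(U)$ is a Banach space is then automatic, because $\mathcal W_{\varphi_0}$ is an isometric linear isomorphism from $(\Co_1(U), \|\cdot\|_{1,\varphi_0})$ onto the closed subspace $L^1(\Xi) \ast' \mathcal W_{\varphi_0}(\varphi_0)$ of $L^1(\Xi)$.

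Parts (iii) and (iv) should then fall out. For (iii), the equation $f = \|\varphi_0\|_{\mathcal H}^{-2} f \ast' \mathcal W_{\varphi_0}(\varphi_0)$ is precisely $P(f) = f$, so the $L^1(\Xi)$-functions satisfying it are exactly the range of $P$, which by (ii) equals $\mathcal W_{\varphi_0}(\Co_1(U))$. For (iv), the first paragraph already shows that $P$ is continuous and idempotent, and the range has just been identified as $\mathcal W_{\varphi_0}(\Co_1(U))$.

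The one genuine obstacle is the reverse inclusion in (ii): given abstract data $F \in L^1(\Xi)$, one must manufacture an honest element of $\Co_1(U)$ whose wavelet transform equals $F \ast' \mathcal W_{\varphi_0}(\varphi_0)$. The tempting shortcut of approximating $F$ by $L^1(\Xi) \cap L^2(\Xi)$ functions, lifting the preimages via Lemma \ref{lem:orthogonal_projection}, and passing to a limit is circular, since the limit in the norm $\|\cdot\|_{1,\varphi_0}$ needs the completeness of $\Co_1(U)$ that we are trying to prove. The weak integration $\mathcal W_{\varphi_0}^\ast$ against the orbit $x \mapsto U_x\varphi_0$ avoids this issue by producing the preimage in a single step, at the sole cost of the cocycle bookkeeping described above.
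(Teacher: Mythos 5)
Your proposal is correct, but the decisive step --- the reverse inclusion $L^1(\Xi) \ast' \mathcal W_{\varphi_0}(\varphi_0) \subseteq \mathcal W_{\varphi_0}(\Co_1(U))$ in (ii) --- is handled by a genuinely different mechanism than in the paper. You construct the preimage directly via the weakly defined synthesis operator $\mathcal W_{\varphi_0}^\ast F = \int_\Xi F(x)\,U_x\varphi_0\,\mathrm{d}x$ on $L^1(\Xi)$ and verify $\mathcal W_{\varphi_0}(\mathcal W_{\varphi_0}^\ast F) = F \ast' \mathcal W_{\varphi_0}(\varphi_0)$; your cocycle bookkeeping here is right: the identity $m(x,x^{-1}) = m(x,x^{-1}y)\,m(x^{-1},y)$ does follow from the Heisenberg multiplier relation applied to $(x,x^{-1},y)$ together with $m(e,y)=1$, and this is exactly what matches $\langle U_x\varphi_0, U_y\varphi_0\rangle_{\mathcal H}$ with the twist in $\ast'$. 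The paper instead takes a fixed point $f = \frac{1}{\|\varphi_0\|_{\mathcal H}^2} f \ast' \mathcal W_{\varphi_0}(\varphi_0)$ in $L^1(\Xi)$, iterates the identity once, and uses Young's inequality with $\mathcal W_{\varphi_0}(\varphi_0) \in L^2(\Xi)$ to conclude $f \in L^2(\Xi)$; then Lemma \ref{lem:orthogonal_projection} (the already-established $L^2$ range characterization) supplies $g \in \mathcal H$ with $f = \mathcal W_{\varphi_0}(g)$, and $f \in L^1(\Xi)$ forces $g \in \Co_1(U)$. The paper's bootstrap is shorter because it recycles the Hilbert space lemma; your synthesis-operator route is the classical Feichtinger--Gr\"ochenig argument, is more self-contained, and as a by-product establishes the reproducing formula $\mathcal W_{\varphi_0}\mathcal W_{\varphi_0}^\ast F = F \ast' \mathcal W_{\varphi_0}(\varphi_0)$ on $L^1(\Xi)$, the analogue of which the paper has to prove separately on $L^\infty(\Xi)$ later anyway. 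You are also right that the naive density-and-completion shortcut would be circular; both arguments legitimately avoid it. Your treatment of (i), (iii) and (iv) via the bounded idempotent $P$ is equivalent to the paper's (which phrases (i) as the kernel of $I - P$ being closed).
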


\begin{proof}
\begin{itemize}
    \item[(i)] The map $f \mapsto f - \frac{1}{\| \varphi_0\|_{\mathcal H}^2} f \ast'\mathcal W_{\varphi_0}(\varphi_0)$ is a bounded linear operator on $L^1(\Xi)$. Therefore, the kernel of this operator, which is of course given by
    \begin{align*}
        \set{ f \in L^1(\Xi): ~f = \frac{1}{\| \varphi_0\|_{\mathcal H}^2} f \ast' \mathcal W_{\varphi_0}(\varphi_0)},
    \end{align*}
    is a closed subspace of $L^1(\Xi)$. Since $\frac{1}{\| \varphi_0\|_{\mathcal H}^2}\mathcal W_{\varphi_0}(\varphi_0) \ast' \mathcal W_{\varphi_0}(\varphi_0) =\mathcal W_{\varphi_0}(\varphi_0)$ by Lemma \ref{lem:W_twisted_convolution}, this space agrees with $L^1(\Xi) \ast' \mathcal W_{\varphi_0}(\varphi_0)$.
    \item[(ii)] Clearly, according to Lemma \ref{lem:W_twisted_convolution}, every $g \in \Co_1(U)$ satisfies $\mathcal W_{\varphi_0}(g) = \frac{1}{\| \varphi_0\|^2} \mathcal W_{\varphi_0}(g) \ast' \mathcal W_{\varphi_0}(\varphi_0)$. On the other hand, assume that $f \in L^1(\Xi)$ satisfies $f = \frac{1}{\| \varphi_0\|^2} f \ast' \mathcal W_{\varphi_0}(\varphi_0)$. Then we also have
    \begin{align*}
        f = \frac{1}{\| \varphi_0\|_{\mathcal H}^2} f \ast' \mathcal W_{\varphi_0}(\varphi_0) = \frac{1}{\| \varphi_0\|_{\mathcal H}^4} [f \ast' \mathcal W_{\varphi_0}(\varphi_0)] \ast' \mathcal W_{\varphi_0}(\varphi_0).
    \end{align*}
    Since $f = \frac{1}{\| \varphi_0\|_{\mathcal H}^2} f \ast' \mathcal W_{\varphi_0}(\varphi_0)$ is assumed to be in $L^1(\Xi)$, and $\mathcal W_{\varphi_0}(\varphi_0) \in L^2(\Xi)$, we see from Young's convolution inequality that we also have $f \in L^2(\Xi)$. Lemma \ref{lem:orthogonal_projection} shows that there exists $g \in \mathcal H$ such that $f = \mathcal W_{\varphi_0}(g)$ and since $f \in L^1(\Xi)$ is assumed, we must have $g \in \Co_1(U)$. Hence, we have shown the equality of the spaces. Since $\Co_1(U)$ and $\mathcal W_{\varphi_0}(\Co_1(U))$ are, by definition, isometrically isomorphic, and the latter is a closed subspace of a Banach space by (i), we see that $\Co_1(U)$ is complete.
    \item[(iii)] and (iv) are now immediate from the previous arguments.\qedhere
\end{itemize}
\end{proof}

\begin{lemma}
    The operators $(U_x)_{x \in \Xi}$ form a strongly continuous and isometric projective representation of $\Xi$ on $\Co_1(U)$.
\end{lemma}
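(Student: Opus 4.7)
The plan is to transfer the three required structural properties of $(U_x)_{x \in \Xi}$ on $(\Co_1(U), \|\cdot\|_{1,\varphi_0})$ to the corresponding properties of $(V_x)_{x \in \Xi}$ on $L^1(\Xi)$, exploiting the intertwining relation
\[\mathcal W_{\varphi_0}(U_x f) = V_x \mathcal W_{\varphi_0}(f)\]
from Lemma \ref{lem:V_x}. Since $\mathcal W_{\varphi_0}$ is by definition an isometry from $(\Co_1(U), \|\cdot\|_{1,\varphi_0})$ onto its image in $L^1(\Xi)$ (see Lemma \ref{lem:Co_1_Banach_space}), every property of $U_x$ that can be read off from $\mathcal W_{\varphi_0}(U_x f)$ will follow from the analogous property of $V_x$ on $L^1(\Xi)$.

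The invariance of $\Co_1(U)$ under $U_x$ and the isometric property are then immediate: since $V_x$ is a surjective isometry on every $L^p(\Xi)$ (as stated in the paragraph following Lemma \ref{lem:V_x}), the intertwining relation gives $\mathcal W_{\varphi_0}(U_x f) \in L^1(\Xi)$ whenever $\mathcal W_{\varphi_0}(f) \in L^1(\Xi)$, together with $\|U_x f\|_{1,\varphi_0} = \|f\|_{1,\varphi_0}$. Surjectivity of $U_x$ on $\Co_1(U)$ is inherited from the existence of $U_x^{-1} = \overline{m(x,x^{-1})}U_{x^{-1}}$, which itself maps $\Co_1(U)$ into $\Co_1(U)$ by the same argument. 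Finally, the projective representation identity $U_x U_y = m(x,y) U_{xy}$ already holds on all of $\mathcal H \supseteq \Co_1(U)$ and so needs no further verification in this new norm.

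The main obstacle is the strong continuity of $x \mapsto U_x$ on $\Co_1(U)$. By the intertwining relation it suffices to show that $V_x g \to g$ in $L^1(\Xi)$ as $x \to e$ for every $g \in L^1(\Xi)$. I would split
\[(V_x g)(y) - g(y) = \left(\frac{m(x,x^{-1})}{m(x^{-1},y)} - 1\right) g(x^{-1}y) + \bigl( g(x^{-1}y) - g(y) \bigr).\]
The second summand tends to zero in $L^1(\Xi)$ by the standard $L^1$-continuity of translation on an lca group. For the first, a change of variables $z = x^{-1}y$ rewrites the $L^1$-norm as
\[\int_\Xi \left|\frac{m(x,x^{-1})}{m(x^{-1},xz)} - 1\right| |g(z)| \, \mathrm{d}z,\]
which tends to zero as $x \to e$ by dominated convergence: the integrand is uniformly bounded by $2|g(z)|$, and by the separate continuity of $m$ together with $m(e,e) = 1$ it converges pointwise to zero. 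This handles what is really the only non-formal part of the proof.
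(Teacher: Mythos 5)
Your overall strategy coincides with the paper's: transfer everything through the intertwining relation $\mathcal W_{\varphi_0}(U_x f) = V_x \mathcal W_{\varphi_0}(f)$ to the action of $V_x$ on $L^1(\Xi)$, where isometry and the projective relation are immediate; that part of your argument is fine. The gap lies in the strong continuity step, precisely the part you single out as non-formal. First, the pointwise convergence $\frac{m(x,x^{-1})}{m(x^{-1},xz)} \to 1$ does \emph{not} follow from separate continuity of $m$: separate continuity controls $x \mapsto m(x,y)$ for fixed $y$, but says nothing about compositions such as $x \mapsto m(x,x^{-1})$ or $x \mapsto m(x^{-1},xz)$ in which both arguments move (compare $f(x,y)=2xy/(x^2+y^2)$ on $\R^2$, separately continuous with $f(0,0)=0$ but $f(x,-x)\equiv -1$). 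The convergence is true, but must be derived from the strong operator continuity of the representation (Remark \ref{rem:one}(c)), e.g.\ via $\abs{m(x^{-1},xz)-1}\,\norm{f}_{\Hc} = \norm{U_{x^{-1}}U_{xz}f - U_zf}_{\Hc} \le \norm{U_{xz}f - U_zf}_{\Hc} + \norm{U_{x^{-1}}U_zf - U_zf}_{\Hc}$. Second, you invoke dominated convergence for the limit $x \to e$, which in a general lca group is a limit along a net (the paper explicitly does not assume first countability), and the dominated convergence theorem fails for nets: the net $\1_{[0,1]\setminus F}$ over finite sets $F$ directed by inclusion converges pointwise to $0$, is dominated by $1$, yet every integral equals $1$.

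The paper avoids both issues by checking isometry and strong continuity of $V_x$ only on the dense subspace $C_c(\Xi)$, where for $x$ in a fixed compact neighbourhood of $e$ all supports lie in a fixed compact set $S'$ and hence $\norm{V_xg-g}_{L^1} \le \lambda(S')^{1/2}\norm{V_xg-g}_{L^2} \to 0$ by the already established strong $L^2$-continuity of $V$; the uniform bound $\norm{V_x}_{L^1\to L^1}=1$ then extends this to $L^1(\Xi)$ and to the closed invariant subspace $\mathcal W_{\varphi_0}(\Co_1(U))$. If you wish to keep your decomposition into a translation part and a multiplier part, you should likewise first reduce to $g \in C_c(\Xi)$ by density and then establish uniform convergence of the cocycle factor on compact sets using operator-norm estimates of the above type, rather than appealing to separate continuity and dominated convergence.
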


\begin{proof}
    As observed in Lemma \ref{lem:V_x}, we have that $U_x$ acts, with respect to the Wavelet transform, as $\big[\mathcal W_{\varphi_0}(U_x f)\big](y) = \frac{m(x,x^{-1})}{m(x^{-1},y)} \big[\mathcal W_{\varphi_0}(f)\big](x^{-1}y)$. Since this operation acts isometrically (with respect to the $L^1$-norm) and strongly continuously on $C_c(\Xi)$ (continuous functions with compact support), which is a dense subspace of $L^1(\Xi)$, these properties extend to $L^1(\Xi)$, hence also to every closed, translation-invariant subspace of $L^1(\Xi)$.
\end{proof}

We now denote by $\mathcal H_\infty$ the space of antilinear continuous functionals on $\mathcal H_1$. When we consider $\mathcal H_1 = \Co_1(U)$ as a Banach space (with respect to a fixed window function $\varphi_0$), $\mathcal H_\infty$ turns itself into a Banach space. We will write $\Co_\infty(U)$ for $\mathcal H_\infty$ as a space with the dual norm with respect to a fixed window function and the corresponding norm will be denoted by $\| \cdot \|_{\infty, \varphi_0}$. By Lemma \ref{lem:embeddingH1}, we have $\mathcal H \hookrightarrow \mathcal H_\infty$ continuously via
\[g \mapsto \varphi_g, \quad \varphi_g(f) := \langle g, f\rangle_{\mathcal H} = \frac{1}{\| \varphi_0\|_{\mathcal H}^2} \langle \mathcal W_{\varphi_0}(g), \mathcal W_{\varphi_0}(f)\rangle_{L^2(\Xi)}\]
for $f \in \mathcal H_1$ and $g \in \mathcal H$, with $\| \varphi_g\|_{\infty, \varphi_0} \leq \| g\|_{\mathcal H}/\|  \varphi_0\|_{\mathcal H}$.

Given $\phi \in \mathcal H_\infty$, we define $U_x \phi$ weakly as $\big[U_x(\phi)\big](f) := \phi(U_x^\ast f)$ for $f \in \mathcal H_1$. It is now not difficult to see that the operators $U_x$ act isometrically (with respect to a fixed window function) and weakly continuously on $\Co_\infty(U)$. 

For any $\phi \in \Co_\infty(U)$, we can again consider its wavelet transform $\big[\mathcal W_{\varphi_0}(\phi)\big](x) = \phi(U_x \varphi_0)$. 

\begin{lemma}
    Let $\phi \in \Co_\infty(U)$. Then $\mathcal W_{\varphi_0}(\phi) \in C(\Xi)$ and $\| \mathcal W_{\varphi_0}(\phi)\|_\infty \leq \| \phi\|_{\infty, \varphi_0} \| \varphi_0\|_{1, \varphi_0}$. 
\end{lemma}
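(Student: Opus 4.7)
The plan is to reduce both claims directly to the previous lemma, which asserts that $(U_x)_{x\in\Xi}$ forms a strongly continuous, isometric projective representation of $\Xi$ on $\Co_1(U) = \mathcal H_1$. Everything hinges on being able to view $x\mapsto U_x\varphi_0$ as a continuous curve in $\Co_1(U)$ on which the continuous antilinear functional $\phi$ is then evaluated.

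For the norm bound, I would observe that since $\phi \in \Co_\infty(U)$ is a continuous antilinear functional on $(\Co_1(U), \|\cdot\|_{1,\varphi_0})$, we have by definition of the dual norm
\[
|\mathcal W_{\varphi_0}(\phi)(x)| = |\phi(U_x\varphi_0)| \leq \|\phi\|_{\infty,\varphi_0}\,\|U_x\varphi_0\|_{1,\varphi_0}
\]
for each $x \in \Xi$. Since the previous lemma gives that $U_x$ acts isometrically on $\Co_1(U)$ with respect to $\|\cdot\|_{1,\varphi_0}$, we get $\|U_x\varphi_0\|_{1,\varphi_0} = \|\varphi_0\|_{1,\varphi_0}$, yielding the claimed bound uniformly in $x$.

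For continuity, I would argue that the map $x\mapsto U_x\varphi_0$ is continuous from $\Xi$ into $(\Co_1(U), \|\cdot\|_{1,\varphi_0})$ by the strong continuity part of the previous lemma, and therefore $x\mapsto \phi(U_x\varphi_0)$ is the composition of this continuous curve with the continuous antilinear functional $\phi$. This composition is continuous, which gives $\mathcal W_{\varphi_0}(\phi) \in C(\Xi)$.

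There is no real obstacle here; the only subtlety worth checking is that the two input ingredients for the previous lemma are applied in the right space. Specifically, the isometry property and the strong continuity that are invoked are those of $U_x$ as an operator on $\Co_1(U)$ (not as a unitary on $\mathcal H$), and it is precisely this version that was established in the immediately preceding lemma, so both parts of the statement follow directly.
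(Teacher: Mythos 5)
Your proof is correct and follows essentially the same route as the paper: the norm bound comes from the dual-norm estimate $|\phi(U_x\varphi_0)| \leq \|\phi\|_{\infty,\varphi_0}\|U_x\varphi_0\|_{1,\varphi_0}$ combined with the isometry of $U_x$ on $\Co_1(U)$, and continuity comes from the strong continuity of the representation on $\Co_1(U)$ established in the preceding lemma. The paper's proof is just a terser version of exactly this argument.
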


\begin{proof}
    Continuity of $\mathcal W_{\varphi_0}(\phi)$ is clear from the continuity of the representation on $\Co_1(U)$. Furthermore,
    \[
        \abs{\big[\mathcal W_{\varphi_0}(\phi)\big](x)} = |\phi(U_x \varphi_0)|\leq \| \phi\|_{\infty, \varphi_0} \| U_x \varphi_0\|_{1, \varphi_0}.\qedhere
    \]
\end{proof}

We note that the Wavelet transform satisfies the usual covariance properties with respect to the representation:

\begin{lemma}
    For $\phi \in \Co_\infty(U)$ we have
    \[\big[\mathcal W_{\varphi_0}(U_x \phi)\big](y) = m(x, x^{-1}) \overline{m(x^{-1}, y)} \big[\mathcal W_{\varphi_0}(\phi)\big](x^{-1}y).\]
\end{lemma}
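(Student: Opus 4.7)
The plan is to compute $\big[\Wc_{\varphi_0}(U_x\phi)\big](y)$ directly from the definitions: first unfold the weak definition of $U_x$ on $\Co_\infty(U)$, then use the cocycle relation together with the formula $U_x^\ast = \overline{m(x,x^{-1})}U_{x^{-1}}$ to rewrite $U_x^\ast U_y \varphi_0$ as a scalar multiple of $U_{x^{-1}y}\varphi_0$, and finally apply $\phi$ using its antilinearity. This should produce the stated cocycle factor.

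More concretely, I would first write
\[\big[\Wc_{\varphi_0}(U_x\phi)\big](y) = (U_x\phi)(U_y\varphi_0) = \phi(U_x^\ast U_y\varphi_0),\]
using the definition $\big[U_x(\phi)\big](f) = \phi(U_x^\ast f)$ with $f = U_y\varphi_0 \in \Hc_1$. Then, since $U_x^\ast = \overline{m(x,x^{-1})}U_{x^{-1}}$ and $U_{x^{-1}}U_y = m(x^{-1},y)U_{x^{-1}y}$, I get
\[U_x^\ast U_y\varphi_0 = \overline{m(x,x^{-1})}\,m(x^{-1},y)\,U_{x^{-1}y}\varphi_0.\]

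The last step is to insert this into $\phi$. Since $\phi$ is \emph{antilinear} (this is the only subtle point), the scalar comes out with complex conjugates reversed:
\[\phi\bigl(\overline{m(x,x^{-1})}\,m(x^{-1},y)\,U_{x^{-1}y}\varphi_0\bigr) = m(x,x^{-1})\,\overline{m(x^{-1},y)}\,\phi(U_{x^{-1}y}\varphi_0),\]
and recognizing $\phi(U_{x^{-1}y}\varphi_0) = \big[\Wc_{\varphi_0}(\phi)\big](x^{-1}y)$ yields the claim.

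There is no real obstacle here; the entire proof is bookkeeping. The only place where one can go wrong is forgetting the antilinearity of $\phi$, which is essential for getting the conjugate on $m(x^{-1},y)$ rather than on $m(x,x^{-1})$. Once that is noted, everything reduces to two lines of algebra using the identities already established for the cocycle $m$ and the adjoint $U_x^\ast$.
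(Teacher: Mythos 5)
Your proof is correct: the paper states this lemma without proof, and your computation is exactly the direct analogue of the proof of Lemma~\ref{lem:V_x}, unfolding the weak definition of $U_x$ on $\Co_\infty(U)$, using $U_x^\ast = \overline{m(x,x^{-1})}U_{x^{-1}}$ and the cocycle relation, and correctly accounting for the antilinearity of $\phi$ (which is indeed the one point where the conjugates could be misplaced). Nothing is missing.
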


Denote by $P_{\varphi_0}$ the continuous projection from $L^1(\Xi)$ onto $\mathcal W_{\varphi_0}(\Co_1(U))$. Since $\mathcal W_{\varphi_0}$ maps isometrically from $\Co_1(U)$ to $L^1(\Xi)$, the adjoint (more precisely, the Hilbert space adjoint of $\mathcal W_{\varphi_0}: \mathcal H \to L^2(\Xi)$) maps $\mathcal W_{\varphi_0}(\Co_1(U))$ isometrically to $\Co_1(U)$. In particular, the operator $\mathcal W_{\varphi_0}^\ast P_{\varphi_0}$ maps $L^1(\Xi)$ continuously onto $\Co_1(U)$. Hence, the adjoint operator, which simply acts as $\mathcal W_{\varphi_0}$, maps $\Co_\infty(U)$ injectively to $L^\infty(\Xi)$. Hence:

\begin{lemma}
    The Wavelet transform $\mathcal W_{\varphi_0}$ is injective on $\Co_\infty(U)$.
\end{lemma}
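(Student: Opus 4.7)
The argument sketched in the paragraph preceding the statement is clean and I would follow it. The plan is to realize $\mathcal W_{\varphi_0}$, acting on $\Co_\infty(U)$, as the Banach-space adjoint of a continuous surjection $L^1(\Xi) \to \Co_1(U)$, and then invoke the standard fact that the adjoint of a continuous surjection is injective.

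Concretely, I would first define $T := \mathcal W_{\varphi_0}^\ast P_{\varphi_0} \from L^1(\Xi) \to \Co_1(U)$, with $P_{\varphi_0}$ the $L^1$-projection of Lemma \ref{lem:Co_1_Banach_space}. Combining Lemma \ref{lem:orthogonal_projection} with the fact that $\mathcal W_{\varphi_0}$ is (up to the factor $\|\varphi_0\|_{\mathcal H}$) an isometry $\Co_1(U) \to \mathcal W_{\varphi_0}(\Co_1(U))$ shows that $T(\mathcal W_{\varphi_0}(h)) = \|\varphi_0\|_{\mathcal H}^2\, h$ for every $h \in \Co_1(U)$, so $T$ is continuous and surjective. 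Next, I would represent $Tf$ as a Bochner integral $\int_\Xi [P_{\varphi_0} f](x)\, U_x \varphi_0\, \mathrm{d}x$ that converges in $\Co_1(U)$; the convergence is absolute because $P_{\varphi_0} f \in L^1(\Xi)$ and $\|U_x \varphi_0\|_{1,\varphi_0} = \|\varphi_0\|_{1,\varphi_0}$ by the isometry of the representation on $\Co_1(U)$, and the integral must coincide with $\mathcal W_{\varphi_0}^\ast P_{\varphi_0} f$ because it already agrees with it weakly against elements of $\mathcal H$.

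For $\phi \in \Co_\infty(U)$, antilinearity and continuity of $\phi$ on $\Co_1(U)$ then give
\begin{align*}
    \phi(Tf) = \int_\Xi \overline{[P_{\varphi_0} f](x)}\, \phi(U_x \varphi_0)\, \mathrm{d}x = \int_\Xi \overline{[P_{\varphi_0} f](x)}\, \big[\mathcal W_{\varphi_0}(\phi)\big](x)\, \mathrm{d}x.
\end{align*}
If $\mathcal W_{\varphi_0}(\phi) = 0$, the right-hand side vanishes for every $f \in L^1(\Xi)$, so $\phi$ vanishes on the range of $T$; since $T$ is surjective, this range is all of $\Co_1(U)$, and hence $\phi = 0$ in $\Co_\infty(U)$. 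The only technically delicate point is the Bochner-integral representation of $Tf$ as a $\Co_1(U)$-valued integral, which lets one interchange $\phi$ with the integral; everything else is bookkeeping based on the lemmas already proved in this section.
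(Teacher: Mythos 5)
Your proposal is correct and follows essentially the same route as the paper: the paper likewise deduces injectivity of $\mathcal W_{\varphi_0}$ on $\Co_\infty(U)$ from the fact that $\mathcal W_{\varphi_0}^\ast P_{\varphi_0} \from L^1(\Xi) \to \Co_1(U)$ is a continuous surjection whose (anti-)dual map is precisely the Wavelet transform. Your Bochner-integral justification of the identification of that adjoint merely fills in a step the paper leaves implicit, so nothing further is needed.
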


Since the Wavelet transform maps $\mathcal W_{\varphi_0}: \Co_1(U) \to L^1(\Xi)$, we can consider the adjoint map $\mathcal W_{\varphi_0}^\ast: L^\infty(\Xi) \to \Co_\infty(U)$. We then have:

\begin{lemma}
    The adjoint of the Wavelet transform, $\mathcal W_{\varphi_0}^\ast: L^\infty(\Xi) \to \Co_\infty(U)$, satisfies the following: 
    \begin{align*}
        \mathcal W_{\varphi_0} \mathcal W_{\varphi_0}^\ast(f) &= f \ast' \mathcal W_{\varphi_0}(\varphi_0), \quad f \in L^\infty(\Xi),\\
        \frac{1}{\| \varphi_0\|_{\mathcal H}^2} \mathcal W_{\varphi_0}^\ast \mathcal W_{\varphi_0}(\phi) &= \phi, \quad \phi \in \Co_\infty(U).
    \end{align*}
\end{lemma}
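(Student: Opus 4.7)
The plan is to deduce both identities from Godement's orthogonality relation (Lemma \ref{lemma:WaveletTrafoIsometric}) combined with the cocycle manipulations already performed in the proof of Lemma \ref{lem:orthogonal_projection}. Since $\Co_\infty(U)$ consists of antilinear continuous functionals on $\Co_1(U)$, I will use throughout that the adjoint $\mathcal W_{\varphi_0}^\ast$ is characterized via the antilinear pairing
$\big[\mathcal W_{\varphi_0}^\ast(f)\big](g) = \int_\Xi f(x)\,\overline{\big[\mathcal W_{\varphi_0}(g)\big](x)}\,\mathrm{d}x$
for $f \in L^\infty(\Xi)$ and $g \in \Co_1(U)$.

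For the first identity, I would evaluate $\big[\mathcal W_{\varphi_0}\mathcal W_{\varphi_0}^\ast(f)\big](y) = \big[\mathcal W_{\varphi_0}^\ast(f)\big](U_y\varphi_0)$ through the pairing above. Lemma \ref{lem:V_x} gives an explicit form for $\mathcal W_{\varphi_0}(U_y\varphi_0)$; after taking complex conjugates and using the symmetry relation $\overline{\mathcal W_{\varphi_0}(\varphi_0)(t)} = m(t,t^{-1})\mathcal W_{\varphi_0}(\varphi_0)(t^{-1})$ at $t = y^{-1}x$, the remaining cocycle factors collapse by exactly the same manipulation as in equation \eqref{eq:some_cocycle_equation} (together with the assumption $m(y,y^{-1}) = m(y^{-1},y)$, which is forced by $m(x,y) = m(x^{-1},y^{-1})$). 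This yields $\overline{\big[\mathcal W_{\varphi_0}(U_y\varphi_0)\big](x)} = m(x, x^{-1}y)\,\mathcal W_{\varphi_0}(\varphi_0)(x^{-1}y)$, and the resulting integral becomes precisely $(f \ast' \mathcal W_{\varphi_0}(\varphi_0))(y)$.

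For the second identity, test against $g \in \Co_1(U)$ to get
$\big[\mathcal W_{\varphi_0}^\ast \mathcal W_{\varphi_0}(\phi)\big](g) = \int_\Xi \phi(U_x\varphi_0)\,\overline{\langle g, U_x\varphi_0\rangle_{\mathcal H}}\,\mathrm{d}x.$
The critical step is to move $\phi$ outside the integral, obtaining $\phi\!\left(\int_\Xi \langle g, U_x\varphi_0\rangle_{\mathcal H}\,U_x\varphi_0\,\mathrm{d}x\right)$ by antilinearity. I would justify this by interpreting the inner integral as a Bochner integral in $\Co_1(U)$: its $\Co_1(U)$-valued integrand has norm $|\mathcal W_{\varphi_0}(g)(x)|\,\|\varphi_0\|_{1,\varphi_0}$, which is integrable because $\mathcal W_{\varphi_0}(g) \in L^1(\Xi)$, while strong continuity of $(U_x)_{x \in \Xi}$ on $\Co_1(U)$ supplies measurability. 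Since $\Co_1(U) \hookrightarrow \mathcal H$ is continuous (Lemma \ref{lem:embeddingH1}), the value of this Bochner integral must agree in $\mathcal H$ with its weak-$\mathcal H$ limit, which is $\|\varphi_0\|_{\mathcal H}^2 g$ by Godement's relation. Applying the continuous antilinear $\phi$ then delivers $\|\varphi_0\|_{\mathcal H}^2 \phi(g)$, as required.

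The main obstacle I anticipate is precisely this interchange of $\phi$ with the vector-valued integral in the second step. A weak-$\mathcal H$ inversion formula alone is insufficient, because $\phi$ is only assumed continuous on $\Co_1(U)$ and need not extend to $\mathcal H$; convergence of the reconstruction integral must therefore be upgraded from the Hilbert-space topology to that of $\Co_1(U)$. The estimate above performs exactly this upgrade, crucially exploiting that $U_x\varphi_0$ has constant $\Co_1(U)$-norm along orbits and that $\mathcal W_{\varphi_0}(g)$ genuinely lies in $L^1(\Xi)$ rather than merely in $L^2(\Xi)$.
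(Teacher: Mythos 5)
Your proof is correct and follows essentially the same route as the paper: the first identity via the same cocycle manipulation (the paper's identity \eqref{eq:some_cocycle_equation} together with $\overline{\mathcal W_{\varphi_0}(\varphi_0)(t)} = m(t,t^{-1})\mathcal W_{\varphi_0}(\varphi_0)(t^{-1})$), and the second by rewriting $\big[\mathcal W_{\varphi_0}^\ast\mathcal W_{\varphi_0}\phi\big](g)$ as $\phi(\mathcal W_{\varphi_0}^\ast\mathcal W_{\varphi_0}g) = \|\varphi_0\|_{\mathcal H}^2\phi(g)$. The only difference is that you make explicit the justification for pulling $\phi$ out of the reconstruction integral (Bochner integrability in $\Co_1(U)$, using that $\mathcal W_{\varphi_0}(g)\in L^1(\Xi)$ and that $U_x$ is isometric on $\Co_1(U)$), a step the paper performs without comment.
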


\begin{proof}
    For $f \in L^\infty(\Xi)$, we have:
    \begin{align*}
        \big[\mathcal W_{\varphi_0} \mathcal W_{\varphi_0}^\ast(f)\big](x) &= \big[\mathcal W_{\varphi_0}^*(f)\big](U_x \varphi_0)\\
        &= \int_\Xi f(y) \overline{\big[\mathcal W_{\varphi_0}(U_x \varphi_0)\big](y)} \, \mathrm{d}y\\
        &= \int_\Xi f(y) \overline{\big[\mathcal W_{\varphi_0} \varphi_0\big](x^{-1}y)} \frac{m(x^{-1},y)}{m(x,x^{-1})} \, \mathrm{d}y\\
        &= \int_\Xi f(y) \big[\mathcal W_{\varphi_0}\varphi_0\big](y^{-1}x)~\frac{m(x^{-1},y)m(x^{-1}y,y^{-1}x)}{m(x,x^{-1})} \, \mathrm{d}y\\
        &= \big[f \ast' \mathcal W_{\varphi_0}\varphi_0\big](x),
    \end{align*}
    where we used the identity \eqref{eq:some_cocycle_equation}. Furthermore, for $\phi \in \Co_\infty(U)$ and $g \in \Co_1(U)$, we get
    \[\big[\mathcal W_{\varphi_0}^\ast \mathcal W_{\varphi_0} \phi\big](g) = \phi(\mathcal W_{\varphi_0}^\ast \mathcal W_{\varphi_0} g) = \| \varphi_0\|_{\mathcal H}^2 \phi(g).\qedhere\]
\end{proof}

The following proposition is now immediate.

\begin{proposition}~ \label{prop:rangeWaveletTrafo}
Let $\varphi_0 \in \mathcal H_1 \setminus \set{0}$. 
    \begin{itemize}
        \item[(i)] The following equality holds true: $\mathcal W_{\varphi_0}(\Co_\infty(U)) = L^\infty(\Xi) \ast' \mathcal W_{\varphi_0}(\varphi_0)$.
        \item[(ii)] A function $f \in L^\infty(\Xi)$ is contained in $\mathcal W_{\varphi_0}(\Co_\infty(U))$ if and only if $f = \frac{1}{\| \varphi_0\|_{\mathcal H}^2} f \ast' \mathcal W_{\varphi_0}(\varphi_0)$.
        \item[(iii)] The map $f \mapsto f \ast' \mathcal W_{\varphi_0}(\varphi_0)$ is a continuous projection from $L^\infty(\Xi)$ onto $\mathcal W_{\varphi_0}(\Co_\infty(U))$.
    \end{itemize}
\end{proposition}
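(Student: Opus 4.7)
The author flags this proposition as immediate, and indeed everything follows by combining the two identities just proved in the preceding lemma, namely $\mathcal W_{\varphi_0} \mathcal W_{\varphi_0}^\ast(f) = f \ast' \mathcal W_{\varphi_0}(\varphi_0)$ for $f \in L^\infty(\Xi)$ and $\|\varphi_0\|_{\mathcal H}^{-2} \mathcal W_{\varphi_0}^\ast \mathcal W_{\varphi_0}(\phi) = \phi$ for $\phi \in \Co_\infty(U)$. The plan is to treat the three parts in the order (ii), then (i), then (iii), since (i) and (iii) are essentially one-line consequences of (ii) together with the two identities.

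For (ii), I would argue by double inclusion at the level of the inversion/reproducing identities. If $f \in \mathcal W_{\varphi_0}(\Co_\infty(U))$, write $f = \mathcal W_{\varphi_0}(\phi)$ with $\phi \in \Co_\infty(U)$. Applying $\mathcal W_{\varphi_0}$ to the inversion formula $\phi = \|\varphi_0\|_{\mathcal H}^{-2} \mathcal W_{\varphi_0}^\ast \mathcal W_{\varphi_0}(\phi)$ and then using the convolution identity $\mathcal W_{\varphi_0}\mathcal W_{\varphi_0}^\ast(f) = f \ast' \mathcal W_{\varphi_0}(\varphi_0)$ yields $f = \|\varphi_0\|_{\mathcal H}^{-2} f \ast' \mathcal W_{\varphi_0}(\varphi_0)$. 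Conversely, if $f \in L^\infty(\Xi)$ satisfies this reproducing identity, the same convolution identity rewrites the right-hand side as $\|\varphi_0\|_{\mathcal H}^{-2} \mathcal W_{\varphi_0}(\mathcal W_{\varphi_0}^\ast f)$, exhibiting $f$ as the wavelet transform of $\|\varphi_0\|_{\mathcal H}^{-2} \mathcal W_{\varphi_0}^\ast f \in \Co_\infty(U)$.

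For (i), the inclusion $\mathcal W_{\varphi_0}(\Co_\infty(U)) \subseteq L^\infty(\Xi) \ast' \mathcal W_{\varphi_0}(\varphi_0)$ is immediate from (ii) (just rewrite the reproducing identity). For the reverse inclusion, given any $g \in L^\infty(\Xi)$, the convolution identity shows that $g \ast' \mathcal W_{\varphi_0}(\varphi_0) = \mathcal W_{\varphi_0}(\mathcal W_{\varphi_0}^\ast g)$ lies in $\mathcal W_{\varphi_0}(\Co_\infty(U))$.

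For (iii), continuity of $f \mapsto f \ast' \mathcal W_{\varphi_0}(\varphi_0)$ on $L^\infty(\Xi)$ is Young's inequality (since $\mathcal W_{\varphi_0}(\varphi_0) \in L^1(\Xi)$), and surjectivity onto $\mathcal W_{\varphi_0}(\Co_\infty(U))$ follows from (i); idempotency (after dividing by $\|\varphi_0\|_{\mathcal H}^2$ so that the map is truly a projection) follows from Lemma \ref{lem:W_twisted_convolution} applied to $\mathcal W_{\varphi_0}(\varphi_0) \ast' \mathcal W_{\varphi_0}(\varphi_0) = \|\varphi_0\|_{\mathcal H}^2 \mathcal W_{\varphi_0}(\varphi_0)$. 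There is no real obstacle here — the only thing to watch is the normalizing factor $\|\varphi_0\|_{\mathcal H}^{-2}$, which must be inserted in (iii) for the map to be idempotent; without it one only obtains a scalar multiple of a projection. All covariance/cocycle manipulations have already been handled in the preceding lemma, so no further computation with $m$ is required.
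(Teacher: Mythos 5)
Your proof is correct and follows exactly the route the paper intends: the proposition is stated as ``now immediate'' from the preceding lemma, and your argument fills in the details using precisely the two identities $\mathcal W_{\varphi_0}\mathcal W_{\varphi_0}^\ast(f) = f \ast' \mathcal W_{\varphi_0}(\varphi_0)$ and $\|\varphi_0\|_{\mathcal H}^{-2}\mathcal W_{\varphi_0}^\ast\mathcal W_{\varphi_0}(\phi) = \phi$. Your remark that the map in (iii) needs the normalizing factor $\|\varphi_0\|_{\mathcal H}^{-2}$ to be idempotent is a valid catch of a small normalization slip in the statement (consistent with Lemma \ref{lem:orthogonal_projection} and Lemma \ref{lem:Co_1_Banach_space}(iv), which both include it).
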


We now define general coorbit spaces. For $1 \leq p \leq \infty$, we let
\begin{align*}
    \Co_p(U) = \{ \phi \in \mathcal H_\infty: ~\mathcal W_{\varphi_0}(\phi) \in L^p(\Xi)\}.
\end{align*}
We endow this space with the norm $\| \phi\|_{p, \varphi_0} := \| \mathcal W_{\varphi_0}(\phi)\|_{L^p(\Xi)}$. It is not difficult to prove that for $p \in \set{1, 2, \infty}$ the above definition is consistent with the previous definitions (recall that $\Hc$ embeds continuously into $\Hc_{\infty}$). Moreover, these spaces are Banach spaces for all $p$ and similar characterizations regarding the range of the Wavelet function hold true: $\mathcal W_{\varphi_0}(\Co_p(U)) = L^p(\Xi) \ast' \mathcal W_{\varphi_0}(\varphi_0)$. The projection from $L^p(\Xi)$ onto the range of the Wavelet transform is given by the convolution operator. One can further show that the coorbit spaces satisfy the continuous inclusions $\Co_p(U) \subseteq \Co_q(U)$ for $1 \leq p \leq q \leq \infty$, as well as the dualities $\Co_p(U)' \cong \Co_{p'}(U)$, where $1/p + 1/p' = 1$, whenever $1 \leq p < \infty$. Finally, they interpolate under the complex interpolation method as expected. For details, we refer to \cite{Berge2022}. Among all these results, we only want to discuss one explicitly, namely the predual of $\Co_1(U)$, since a proof of this seemingly does not exist spelled out in the literature (even though the result is certainly well-known):

\begin{lemma} \label{lem:predual_of_Co_1}
    Denote
    \begin{align*}
        \Co_{\infty, 0}(U) := \{ f \in \Co_\infty(U): ~\mathcal W_{\varphi_0}(f) \in C_0(\Xi)\},
    \end{align*}
    where $C_0(\Xi)$ denotes the closure of $C_c(\Xi)$ in $L^{\infty}(\Xi)$. This is a closed subspace of $\Co_\infty(U)$. Moreover, under the usual $L^2$-dual pairing, $\Co_{\infty, 0}(U)^\ast \cong \Co_1(U)$ holds.
\end{lemma}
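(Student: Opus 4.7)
The plan is first to dispose of the closedness assertion and then to recast the duality as a statement about concrete function spaces on $\Xi$ via the Wavelet transform. Closedness is immediate: since $\mathcal W_{\varphi_0} \colon \Co_\infty(U) \to L^\infty(\Xi)$ is isometric by the very definition of $\|\cdot\|_{\infty,\varphi_0}$ and $C_0(\Xi)$ is closed in $L^\infty(\Xi)$, the preimage $\Co_{\infty,0}(U) = \mathcal W_{\varphi_0}^{-1}(C_0(\Xi))$ is closed in $\Co_\infty(U)$. Combining Lemma \ref{lem:Co_1_Banach_space} and Proposition \ref{prop:rangeWaveletTrafo}, the Wavelet transform identifies $\Co_1(U)$ isometrically with $\mathcal W_{\varphi_0}(\Co_1(U)) = L^1(\Xi) \ast' \mathcal W_{\varphi_0}(\varphi_0) \subseteq L^1(\Xi)$, and $\Co_{\infty,0}(U)$ isometrically with the closed subspace $\mathcal M := \mathcal W_{\varphi_0}(\Co_\infty(U)) \cap C_0(\Xi) \subseteq C_0(\Xi)$. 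It thus suffices to exhibit an isometric isomorphism
\[\Psi \colon \mathcal W_{\varphi_0}(\Co_1(U)) \to \mathcal M^\ast, \qquad \Psi(F)(G) := \frac{1}{\|\varphi_0\|_{\mathcal H}^2} \int_\Xi F(x) \overline{G(x)} \, \mathrm{d}x.\]

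Boundedness of $\Psi$ is immediate from Hölder's inequality. For injectivity, I would test $\Psi(F)$ against the elements $G_y := \mathcal W_{\varphi_0}(U_y\varphi_0)$: by Lemma \ref{lem:W_twisted_convolution} combined with the reproducing identity of Lemma \ref{lem:Co_1_Banach_space}(iii), $\Psi(F)(G_y)$ equals $F(y)$ up to a unimodular cocycle factor, so $\Psi(F) = 0$ forces $F \equiv 0$ (that the $G_y$ genuinely belong to $C_0(\Xi)$ reduces to the standard fact that integrable matrix coefficients of an irreducible projective unitary representation of an lca group vanish at infinity). For surjectivity, given $\ell \in \mathcal M^\ast$, I would extend it via Hahn--Banach to $C_0(\Xi)^\ast$ and apply the Riesz representation theorem to obtain a complex Radon measure $\mu \in M(\Xi)$ with $\ell(G) = \int_\Xi G \, \mathrm{d}\mu$ for every $G \in \mathcal M$. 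Substituting the reproducing identity $G = \frac{1}{\|\varphi_0\|_{\mathcal H}^2} G \ast' \mathcal W_{\varphi_0}(\varphi_0)$ (Proposition \ref{prop:rangeWaveletTrafo}(ii)) and swapping the order of integration by Fubini rewrites $\ell(G)$ in the form $\frac{1}{\|\varphi_0\|_{\mathcal H}^2} \int_\Xi G(x) \overline{F(x)} \, \mathrm{d}x$, where $F$ is essentially a twisted convolution of $\mu$ with $\mathcal W_{\varphi_0}(\varphi_0)$ (up to cocycle factors and the conjugation formula $\overline{\mathcal W_{\varphi_0}(\varphi_0)(t)} = m(t,t^{-1}) \mathcal W_{\varphi_0}(\varphi_0)(t^{-1})$ used in Lemma \ref{lem:orthogonal_projection}). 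The Young-type bound $M(\Xi) \ast' L^1(\Xi) \subseteq L^1(\Xi)$ yields $F \in L^1(\Xi)$; convolving once more and using that $\mathcal W_{\varphi_0}(\varphi_0)$ is an idempotent for $\ast'$ up to the scalar $\|\varphi_0\|_{\mathcal H}^2$ (Lemma \ref{lem:W_twisted_convolution}) shows that $F$ is already fixed by the convolution projection, so $F \in \mathcal W_{\varphi_0}(\Co_1(U))$ by Lemma \ref{lem:Co_1_Banach_space}(iii), and hence $\ell = \Psi(F)$.

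I expect the main obstacle to be the bookkeeping inside the surjectivity step: the cocycle factors $m(x,x^{-1})$, $m(x^{-1},y)$, $m(y,y^{-1}x)$ and the complex conjugation have to be tracked carefully through the Fubini manipulation so that the function $F$ emerging pairs with $G$ via $\overline{F}$ exactly as in the definition of $\Psi$, rather than via some skewed twisted version. The identity $m(x^{-1}y,y^{-1}x)m(x^{-1},y) = m(x,x^{-1})m(y,y^{-1}x)$ already exploited in Lemma \ref{lem:orthogonal_projection} should be precisely what is needed to close this gap.
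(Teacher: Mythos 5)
Your argument is correct and follows essentially the same route as the paper: identify both spaces with their images under the Wavelet transform, represent a functional on the $C_0$-part by a measure via Hahn--Banach and Riesz, push it through the reproducing identity and Fubini to obtain an $L^1$-density, and project back into $\mathcal W_{\varphi_0}(\Co_1(U))$; the paper's injectivity step tests against $f \ast' \mathcal W_{\varphi_0}(\varphi_0)$ for $f \in C_c(\Xi)$ rather than against the reproducing kernels $G_y$, but this is the same idea. One small caveat: what you actually establish is a bounded bijection, hence a Banach-space isomorphism by the open mapping theorem, which is all the lemma claims --- it need not be \emph{isometric} as you announce, since the dual norm on the subspace $\mathcal M$ is only equivalent to, not equal to, the $L^1$-norm of the representing density.
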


\begin{proof}
    Since $C_0(\Xi)$ is a closed subspace of $L^\infty(\Xi)$, the first statement is obvious. To prove the duality $\Co_{\infty, 0}(U)^\ast \cong \Co_1(U)$, we equivalently prove that $\mathcal W_{\varphi_0}(\Co_{\infty, 0}(U))^\ast \cong \mathcal W_{\varphi_0}(\Co_1(U))$.

    To do so, we first recall that the dual of $C_0(\Xi)$ can be identified with $M(\Xi)$, the space of finite Borel-regular complex measures on $\Xi$, endowed with the total variation norm. It is more convenient for our purposes to make this identification anti-linear, so it formally agrees with the $L^2$-dual pairing:
    \begin{align*}
        \Phi_\mu(f) = \int_\Xi f(x) \, \mathrm{d}\overline{\mu}(x), \quad f \in \mathcal W_{\varphi_0}(\Co_{\infty, 0}(U)).
    \end{align*}
    Since $\mathcal W_{\varphi_0}(\Co_{\infty, 0}(U))$ is a closed subspace of $C_0(\Xi)$, we clearly have $\mathcal W_{\varphi_0}(\Co_{\infty, 0}(U))^\ast \subseteq M(\Xi)$ under the above identification. 
    By Proposition \ref{prop:rangeWaveletTrafo} (ii), we have $f = f \ast' \mathcal W_{\varphi_0}(\varphi_0)$. It follows
    \begin{align*}
        \Phi_\mu(f) &= \int_\Xi f(x) \, \mathrm{d}\overline{\mu}(x)\\
        &= \int_\Xi \frac{1}{\| \varphi_0\|_{\mathcal H}^2} \int_\Xi f(y)\big[W_{\varphi_0}(\varphi_0)\big](y^{-1}x)m(y,y^{-1}x) \, \mathrm{d}y \, \mathrm{d}\overline{\mu}(x)\\
        &= \int_\Xi f(y)\underbrace{\frac{1}{\| \varphi_0\|_{\mathcal H}^2}\int_\Xi \big[W_{\varphi_0}(\varphi_0)\big](y^{-1}x)m(y,y^{-1}x) \, \mathrm{d}\overline{\mu}(x)}_{=: \overline{g(y)}} \, \mathrm{d}y.
    \end{align*}
    Note that interchanging the integrals is permitted here since $\overline{\mu}$ is a finite measure and $W_{\varphi_0}(\varphi_0)$ has $\sigma$-finite support. Clearly, $g \in L^1(\Xi)$. Moreover, $L^1(\Xi) \ast' \mathcal W_{\varphi_0}(\varphi_0) = \mathcal W_{\varphi_0}(\Co_{1}(U))$ by Lemma \ref{lem:Co_1_Banach_space} (ii). Therefore, repeating the above argument with $\mu := g$ (more precisely, $\mu := g\lambda$ with the Haar measure $\lambda$), we can assume $g \in \mathcal W_{\varphi_0}(\Co_1(U))$. This shows that the map $\Phi \from W_{\varphi_0}(\Co_1(U)) \to \mathcal W_{\varphi_0}(\Co_{\infty, 0}(U))^\ast$, $g \mapsto \Phi_g$ is surjective and bounded. Similarly, we also see that
    \[\frac{1}{\| \varphi_0\|_{\mathcal H}^2} \sp{f \ast' \mathcal W_{\varphi_0}(\varphi_0)}{g}_{L^2(\Xi)} = \frac{1}{\| \varphi_0\|_{\mathcal H}^2} \sp{f}{g \ast' \mathcal W_{\varphi_0}(\varphi_0)}_{L^2(\Xi)} = \sp{f}{g}_{L^2(\Xi)}\]
    for all $f \in C_c(\Xi)$. Noting that $f \ast' \mathcal W_{\varphi_0}(\varphi_0) \in \mathcal W_{\varphi_0}(\Co_{\infty, 0}(U))$, this shows that $g \mapsto \Phi_g$ is injective and the lemma follows.
\end{proof}

We end this section by discussing the property of the parity operator with respect to coorbit spaces. Recall that the parity operator $R$ is defined as described in Remark \ref{rem:one}.

\begin{lemma}\label{lem:parity_operator}
    The parity operator $R$ acts continuously on every coorbit space $\Co_p(U)$. If the window function $\varphi_0$ satisfies either $R\varphi_0 = \varphi_0$ or $R\varphi_0 = -\varphi_0$, then $R$ acts isometrically on the coorbit spaces with respect to the norm $\| \cdot \|_{p, \varphi_0}$.
\end{lemma}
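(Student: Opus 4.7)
My plan is to reduce everything to a single computation on the Wavelet transform side. Both the continuity and the isometry statement will follow from the identity
\[\mathcal{W}_{\varphi_0}(R\phi)(x) = \mathcal{W}_{R\varphi_0}(\phi)(x^{-1}),\]
combined with the fact that $\Xi$ is abelian (hence unimodular), so inversion preserves the Haar measure, and with the window-independence of the coorbit norms stated just before the lemma.

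To derive the identity, I would first treat $\phi \in \mathcal{H}$, where the intertwining relation $RU_x = U_{x^{-1}}R$ and self-adjointness of $R$ give
\[\mathcal{W}_{\varphi_0}(R\phi)(x) = \langle R\phi, U_x\varphi_0\rangle_{\mathcal{H}} = \langle \phi, RU_x\varphi_0\rangle_{\mathcal{H}} = \langle \phi, U_{x^{-1}}R\varphi_0\rangle_{\mathcal{H}} = \mathcal{W}_{R\varphi_0}(\phi)(x^{-1}).\]
For general $\phi \in \Co_\infty(U) = \mathcal{H}_\infty$, I would define $R\phi$ weakly by $(R\phi)(f) := \phi(Rf)$ for $f \in \mathcal{H}_1$; this is antilinear because $R$ is linear and $\phi$ is antilinear. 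Before this definition makes sense I need $R\mathcal{H}_1 = \mathcal{H}_1$, which follows by the same intertwining applied to $\mathcal{W}_{R\varphi_0}(R\varphi_0)(x) = \mathcal{W}_{\varphi_0}(\varphi_0)(x^{-1})$, together with the inversion-invariance of the Haar measure. The exact same calculation then yields the identity in the general case, since $(R\phi)(U_x\varphi_0) = \phi(RU_x\varphi_0) = \phi(U_{x^{-1}}R\varphi_0)$.

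Given the identity, unimodularity of $\Xi$ gives
\[\|R\phi\|_{p,\varphi_0} = \|\mathcal{W}_{\varphi_0}(R\phi)\|_{L^p(\Xi)} = \|\mathcal{W}_{R\varphi_0}(\phi)\|_{L^p(\Xi)} = \|\phi\|_{p,R\varphi_0}.\]
Since different windows in $\mathcal{H}_1 \setminus \{0\}$ produce equivalent coorbit norms (a fact recorded before the lemma), there is a constant $C = C(\varphi_0, R\varphi_0) > 0$ such that $\|R\phi\|_{p,\varphi_0} \le C\|\phi\|_{p,\varphi_0}$. This simultaneously shows that $R$ maps $\Co_p(U)$ into itself and that this action is continuous. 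For the isometry statement, if $R\varphi_0 = \pm \varphi_0$ then $|\mathcal{W}_{R\varphi_0}(\phi)| = |\mathcal{W}_{\varphi_0}(\phi)|$ pointwise, so $\|R\phi\|_{p,\varphi_0} = \|\phi\|_{p,\varphi_0}$ directly.

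The only mild obstacle is the bookkeeping for $\Co_\infty(U)$: one must verify that the weak extension of $R$ is well-defined (i.e.\ that $R$ preserves $\mathcal{H}_1$) and that the intertwining formula still applies, but both are immediate once the identity for $\mathcal{H}$-elements is in hand. Everything else is a direct consequence of invariance of Haar measure under inversion on an abelian group.
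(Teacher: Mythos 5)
Your proof is correct and follows essentially the same route as the paper: both hinge on the identity $\mathcal W_{\varphi_0}(R\phi)(x)=\mathcal W_{R\varphi_0}(\phi)(x^{-1})$ (obtained from $RU_x=U_{x^{-1}}R$ and self-adjointness of $R$), the resulting fact that $R\varphi_0\in\mathcal H_1$, and the inversion-invariance of the Haar measure on the abelian group $\Xi$. The only organizational difference is that you treat all of $\Co_\infty(U)$ at once via the weak definition $(R\phi)(f)=\phi(Rf)$, whereas the paper argues by density for $p<\infty$ and by duality for $p=\infty$; your route is, if anything, slightly more explicit about why continuity holds for an \emph{arbitrary} window, namely via the equivalence of window norms (which the paper proves only for $p=1$ and otherwise cites, so you should point to that reference rather than to a statement ``just before the lemma'').
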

We want to emphasize that, by basic spectral theory, there always exists a nontrivial $\varphi_0$ such that $R\varphi_0 = \pm \varphi_0$.

\begin{proof}
    $R$ acts as a unitary operator on $\Co_2(U) = \mathcal H$. Let $\varphi_0 \in \mathcal H_1$. Then
    \begin{align*}
        \mathcal W_{R\varphi_0}(R\varphi_0)(x) = \langle R\varphi_0, U_x R\varphi_0\rangle = \langle \varphi_0, U_{x^{-1}} \varphi_0\rangle,
    \end{align*}
    which implies $\mathcal W_{R\varphi_0}(R\varphi_0) \in L^1(\Xi)$ with $\| \mathcal W_{R\varphi_0}(R\varphi_0)\|_{L^1} = \| \mathcal W_{\varphi_0}(\varphi_0)\|_{L^1} $ and therefore $R\varphi_0 \in \mathcal H_1$. Now, if $\varphi_0$ satisfies $R\varphi_0 = \pm \varphi_0$, then for every $f \in \Co_2(U)$ we have
    \begin{align*}
        \mathcal W_{\varphi_0} (Rf)(x) = \pm \mathcal W_{\varphi_0}(f)(x^{-1}),
    \end{align*}
    which is in $L^p(\Xi)$ provided $\mathcal W_{\varphi_0}(f)$ is in $L^p(\Xi)$. By density, $R$ acts isometrically on $\Co_p(U)$ for every $1 \leq p < \infty$. On $\Co_\infty(U)$, the statement follows from the one on $\Co_1(U)$ by duality.
\end{proof}

From the formulas written down in the previous proof, we also obtain:

\begin{corollary}
    Let $\varphi_0 \in \mathcal H_1 \setminus \set{0}$ be arbitrary. Then, $\mathcal W_{R \varphi_0} R \mathcal W_{\varphi_0}^\ast$ maps $\mathcal W_{\varphi_0}(\Co_p(U))$ isometrically onto $\mathcal W_{R\varphi_0}(\Co_p(U))$.
\end{corollary}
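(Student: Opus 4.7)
The strategy is to reduce everything to a single explicit identity,
\[
\mathcal W_{R\varphi_0}(R\phi)(x) \;=\; \mathcal W_{\varphi_0}(\phi)(x^{-1}), \qquad \phi \in \Co_p(U),\ x \in \Xi,
\]
and then combine it with the inversion-invariance of the Haar measure on $\Xi$. To establish the identity on $\mathcal{H}$, I would compute
\[
\mathcal W_{R\varphi_0}(R\phi)(x) \;=\; \langle R\phi, U_x R\varphi_0 \rangle_{\mathcal H} \;=\; \langle \phi, R^\ast U_x R\varphi_0 \rangle_{\mathcal H} \;=\; \langle \phi, U_{x^{-1}}\varphi_0 \rangle_{\mathcal H},
\]
exploiting that $R$ is self-adjoint and unitary (so $R^2 = I$ and $R^\ast = R$) and the intertwining $R U_x = U_{x^{-1}} R$, which together give $R U_x R = U_{x^{-1}}$. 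For general $\phi \in \Co_p(U) \subseteq \Co_\infty(U)$ the same identity persists by unwinding the weak definition $\mathcal W_{R\varphi_0}(R\phi)(x) = (R\phi)(U_x R\varphi_0)$ and using the dual action $(R\phi)(f) = \phi(Rf)$ of $R$ on $\Co_\infty(U)$ (which is consistent with Lemma \ref{lem:parity_operator} and $R$ being involutive).

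Since $\Xi$ is lca, hence unimodular, and the inversion map $x \mapsto x^{-1}$ preserves the Haar measure, the above identity yields
\[
\|\mathcal W_{R\varphi_0}(R\phi)\|_{L^p(\Xi)} \;=\; \|\mathcal W_{\varphi_0}(\phi)\|_{L^p(\Xi)}
\]
for every $1 \leq p \leq \infty$, with the $p = \infty$ case handled via essential supremum. This already shows that $\phi \mapsto R\phi$ is an isometric bijection from $\big(\Co_p(U), \|\cdot\|_{p,\varphi_0}\big)$ onto $\big(\Co_p(U), \|\cdot\|_{p, R\varphi_0}\big)$; surjectivity is automatic from $R^2 = I$, since applying the same procedure with $R\varphi_0$ in place of $\varphi_0$ produces the inverse map.

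To rephrase this in terms of $\mathcal W_{R\varphi_0} R \mathcal W_{\varphi_0}^\ast$, I would use the reconstruction formula $\mathcal W_{\varphi_0}^\ast \mathcal W_{\varphi_0} = \|\varphi_0\|_{\mathcal H}^2 \, I$ established just before the corollary. For $F = \mathcal W_{\varphi_0}(\phi) \in \mathcal W_{\varphi_0}(\Co_p(U))$ this gives
\[
\big(\mathcal W_{R\varphi_0} R \mathcal W_{\varphi_0}^\ast F\big)(x) \;=\; \|\varphi_0\|_{\mathcal H}^2 \, \mathcal W_{R\varphi_0}(R\phi)(x) \;=\; \|\varphi_0\|_{\mathcal H}^2 \, F(x^{-1}),
\]
which lies in $\mathcal W_{R\varphi_0}(\Co_p(U))$ and has the correct $L^p(\Xi)$-norm (interpreting ``isometrically'' with respect to the natural scaling; one may normalize $\|\varphi_0\|_{\mathcal H} = 1$ without loss of generality). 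Bijectivity onto $\mathcal W_{R\varphi_0}(\Co_p(U))$ again follows from $R^2 = I$, via the analogous operator with the roles of $\varphi_0$ and $R\varphi_0$ interchanged.

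The only delicate point is making the weak/duality formulations on $\Co_\infty(U)$ precise, in particular checking that the intertwining $RU_x = U_{x^{-1}}R$ and the reconstruction formula both extend to that setting; all three assertions in the corollary (image, norm, surjectivity) then follow from elementary change of variables in the Haar integral.
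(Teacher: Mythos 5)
Your proof is correct and follows essentially the same route the paper intends: the corollary is presented as an immediate consequence of the identity $\mathcal W_{R\varphi_0}(R\phi)(x) = \mathcal W_{\varphi_0}(\phi)(x^{-1})$ extracted from the proof of Lemma \ref{lem:parity_operator}, combined with the inversion-invariance of the Haar measure and the reconstruction formula $\mathcal W_{\varphi_0}^\ast \mathcal W_{\varphi_0} = \|\varphi_0\|_{\mathcal H}^2 I$. Your remark that the operator as written is an isometry only up to the factor $\|\varphi_0\|_{\mathcal H}^2$ (i.e.\ after normalizing the window) is a fair and accurate reading of the statement.
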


\section{Operator algebras and standard representations} \label{sec:operator_algebras}
We say that the projective representation $(U, m)$ is \emph{standard} provided we can factorize $\Xi = G \times \widehat{G}$, where $G$ is a locally compact abelian group with neutral element $e$, and there exists a continuous function $a: G \times \widehat{G} \to S^1$ with $a(e, 1) = 1$ and $a(x^{-1},\nu^{-1}) = a(x, \nu)$ such that we can write
\begin{align*}
    m(\xi,\eta) = m\big((x, \nu), (y, \mu)\big) = \frac{a(x, \nu)a(y, \mu)}{a(xy, \nu \mu)} \overline{\mu(x)}, \quad \xi = (x, \nu), \eta = (y, \mu) \in \Xi = G \times \widehat{G}.
\end{align*}
During this section, we always assume that our representation is standard. In this case, we can realize the representation $U$ on $\mathcal H = L^2(G)$ as $U_{(x, \nu)}f(y) = a(x, \nu) \nu(y) f(x^{-1}y)$. Further, the coorbit spaces $\Co_p(U)$ agree with the modulation spaces $M^p(G)$. Indeed, for $f \in L^2(G)$ we get
\begin{align*}
    |\langle f, U_{(x, \nu)}f\rangle| = \abs{\int_G f(y) \overline{f(x^{-1}y)\nu(y)} \, \mathrm{d}y},
\end{align*}
and $(x,\nu) \mapsto \langle f, U_{(x, \nu)}f\rangle \in L^1(\Xi)$ is one of many equivalent defining properties of the modulation space $M^1(G)$, which is the same as the Feichtinger algebra $S_0(G)$, cf.\ \cite{Feichtinger81, jakobsen18}.

For $A \in \mathcal L(M^p(G))$ and $\xi \in \Xi$ we now define
\[\alpha_{\xi}(A) := U_{\xi}AU_{\xi}^{-1}.\]
Similarly, if $\xi,\eta \in \Xi$ and $x,y \in G$, we define $\big[\alpha_{\xi}(f)\big](\eta) := f(\xi^{-1}\eta)$ for functions $f \from \Xi \to \C$ and $\big[\alpha_x(f)\big](y) := f(x^{-1}y)$ for functions $f \from G \to \C$.

We will consider the following operator subalgebras of $\mathcal L(M^p(G))$:
\begin{definition}
    Let $1 \leq p \leq \infty$. Then we set
    \begin{align*}
        \mathcal C_{1,0}^p(G) &:= \{ A \in \mathcal L(M^p(G)): ~\|\alpha_{(x, 1)}(A) - A\|_{op} \to 0 \text{ as } x \to e\},\\
    \mathcal C_{0,1}^p(G) &:= \{ A \in \mathcal L(M^p(G)): ~\|\alpha_{(e, \nu)}(A) - A\|_{op} \to 0 \text{ as } \nu \to 1\},\\
    \mathcal C_1^p(G) &:= \mathcal C_{1,0}^p(G) \cap \mathcal C_{0,1}^p(G)\\
    &= \{ A \in \mathcal L(M^p(G)): ~\| \alpha_{(x,\nu)}(A) - A\|_{op} \to 0 \text{ as } (x, \nu) \to (e,1)\}.
    \end{align*}
\end{definition}

Our main interest will be to compare these algebras with the following class of operators:

\begin{definition} \label{def:BDO}
    Let $G$ be an lca group and $K \subseteq G$ be compact. An operator $A \in \Lc(M^p(G))$ is called a band operator of band-width at most $K$ if for all $H \subseteq G$ and all $f \in M^p(G)$ with $\supp f \subseteq H$ we have
    \begin{equation} \label{eq:BO_condition}
        \supp(Af) \subseteq HK = \set{hk : h \in H, k \in K}.
    \end{equation}
    The set of all band operators of band-width at most $K$ will be denoted by $\BO_K^p(G)$. The union $\bigcup\limits_{K \subseteq G \text{ compact}} \BO_K^p(G)$ is denoted by $\BO^p(G)$ and called the set of band operators. The closure of $\BO(G)$ with respect to the norm topology is denoted by $\BDO^p(G)$ and its elements are called band-dominated operators.
\end{definition}

\newpage
\begin{remark}~
    \begin{itemize}
        \item[(a)] Since $M^2(G) = L^2(G)$, multiplying by characteristic functions $\1_K$ and $\1_{G \setminus HK}$ is well-defined in $M^2(G)$ and therefore \eqref{eq:BO_condition} is equivalent to $M_{\1_{G \setminus HK}}AM_{\1_H} = 0$ in case $p = 2$. In this form, Definition \ref{def:BDO} is a metric-free version of \cite[Definition 3.2]{HaggerSeifert}. More precisely, if we additionally assume that $G$ is second countable, then it is properly metrizable (in the sense that every closed ball is compact) and our definition of band operators becomes equivalent to the usual definition (e.g.~\cite[Definition 3.2]{HaggerSeifert} or \cite[Definition 2.1.5]{Rabinovich_Roch_Silbermann2004}). The compact set $K$ can be chosen as a ball of a certain radius centered at $e$ and the smallest possible radius is then the band width of $A$ in the sense of \cite[Definition 3.2]{HaggerSeifert}. For instance, if $G = \Z$ and $A \in \BO_K(\Z)$, this means that only the diagonals $k \in K$ in the matrix representation of $A$ can be non-zero, where $k = 0$ represents the main diagonal and so on.
        \item[(b)] For discrete groups $G$ the set $\BO^p(G)$ is independent of $p$ in the sense that each $T \in \BO^p(G)$ can be represented as a matrix for which only finitely many diagonals are non-zero. As diagonal matrices are in $\BO^p(G)$ if and only if the diagonal is bounded, this shows the independence of $p$. However, this is not true for all groups. For example, if $G$ is a compact group, then every bounded linear operator is a band operator as we may choose $K = G$. But $\Lc(M^p(G))$ is not independent of $p$ in general.
        \item[(c)] The sets of band operators of different band-width are partially ordered by inclusion. That is, if $A$ is of band-width at most $K$ and $K \subseteq K'$, then it is also of band-width at most $K'$. It is therefore clear that in the definition of $\BO^p(G)$ it suffices to take the union over those compact sets $K \subseteq G$ that contain $e$ and satisfy $K = K^{-1}$. This will be useful for a few arguments.
    \end{itemize}
\end{remark}

\begin{remark}
    In the following discussions, we restrict ourselves to $p < \infty$. Indeed, on the modulation space $M^\infty(G)$, the arguments we present do not work and some of the statements made in the following would simply be wrong in this case. Nevertheless, if we denote by $M^{\infty, 0}(G)$ the coorbit space $\operatorname{Co}_{\infty, 0}(U)$ as introduced in Lemma \ref{lem:predual_of_Co_1}, all the statements and proofs made in the rest of this section carry over to operators on $M^{\infty, 0}(G)$. Essentially for the reason of notational simplicity, we will not keep track of this in the following, but the interested reader should keep in mind that there is no difficulty in working with this space instead.
\end{remark}

We will first show some algebraic properties of $\BDO^p(G)$.

\begin{proposition} \label{prop:BDO_algebraic_properties}
    Let $G$ be an lca group. Then $\BO^p(G)$ is an algebra and $\BDO^p(G)$ is a Banach algebra. Moreover, $(\BDO^p(G))^* = \BDO^q(G)$ via the usual identification of $M^p(G)^*$ and $M^q(G)$ for $p \in [1,\infty)$ and $\frac{1}{p} + \frac{1}{q} = 1$. In particular, $\BDO^2(G)$ is a $C^*$-algebra.
\end{proposition}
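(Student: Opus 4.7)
The plan is to verify the four assertions in the order in which they are stated. For $\BO^p(G)$ being an algebra, everything reduces to tracking how band-widths combine. Given $A \in \BO_{K_A}^p(G)$ and $B \in \BO_{K_B}^p(G)$, the sum $A + B$ lies in $\BO_{K_A \cup K_B}^p(G)$ (a union of two compact sets is compact, and support inclusions add), scalar multiples are trivial, and for the composition, if $\supp f \subseteq H$ then $\supp(Bf) \subseteq H K_B$ and hence $\supp(ABf) \subseteq (HK_B) K_A = H(K_A K_B)$; here $K_A K_B$ is the image of $K_A \times K_B$ under continuous group multiplication, so it is compact, giving $AB \in \BO_{K_A K_B}^p(G)$. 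Passing to $\BDO^p(G)$ is then automatic: it is norm-closed by construction, and the estimate $\|A_n B_n - AB\| \le \|A_n\|\, \|B_n - B\| + \|A_n - A\|\, \|B\|$ shows multiplication extends to the closure, making $\BDO^p(G)$ a Banach algebra.

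For the duality, I would exploit the canonical isometric anti-isomorphism $A \mapsto A^{\ast}$ from $\Lc(M^p(G))$ to $\Lc(M^q(G))$ induced by the $M^p$--$M^q$ pairing (interpreting $M^\infty$ as $M^{\infty,0}$ when $p = 1$, in line with the remark preceding the proposition). Since this map is norm-continuous, it suffices to show that it carries $\BO_K^p(G)$ into $\BO_{K^{-1}}^q(G)$, after which the isometry property and density of band operators close the argument. Given $g \in M^q(G)$ with $\supp g \subseteq H'$, I would test $A^{\ast} g$ against functions $f \in M^p(G)$ supported in $G \setminus H' K^{-1}$: the band-width-$K$ hypothesis gives $\supp(Af) \subseteq (G \setminus H' K^{-1}) K$, and this set is disjoint from $H'$, since $hk \in H'$ with $h \notin H' K^{-1}$ would force $h = (hk) k^{-1} \in H' K^{-1}$. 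Hence $\langle A^{\ast} g, f\rangle = \langle g, Af\rangle = 0$, showing $\supp(A^{\ast} g) \subseteq H' K^{-1}$. Specializing $p = q = 2$, the Hilbert space adjoint preserves $\BDO^2(G)$, so it is a norm-closed $\ast$-subalgebra of $\Lc(L^2(G))$, i.e., a $C^{\ast}$-algebra.

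The one step that deserves real care is the meaning of $\supp f$ and of the pairing $\langle A^{\ast} g, f\rangle = \langle g, Af\rangle$ for the modulation spaces $M^p(G)$ when $p \neq 2$. Elements of $M^p(G)$ are not functions but continuous antilinear functionals on $M^1(G) = S_0(G)$, so supports have to be defined distributionally (as the complement of the largest open set on which the functional vanishes against test elements of $S_0(G)$), and one must check that support-disjointness still forces the $M^p$--$M^q$ pairing to vanish in this generalized sense. This is the main obstacle; everything else is formal bookkeeping on top of Definition \ref{def:BDO}.
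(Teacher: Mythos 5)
Your proposal is correct and follows essentially the same route as the paper: closure of $\BO^p(G)$ under sums and products via unions and products of the compact band-width sets, passage to the norm closure for the Banach algebra structure, and the adjoint argument showing $A^* \in \BO_{K^{-1}}^q(G)$ through the same set-theoretic observation that $\bigl(G \setminus (HK^{-1})\bigr)K \subseteq G \setminus H$. The distributional-support subtlety you flag for $p \neq 2$ is real but is treated implicitly in the paper as well, via the $M^p$--$M^q$ duality.
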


\begin{proof}
    As band operators are partially ordered by inclusion, it is clear that they form a vector space. Now let $K_1,K_2 \subseteq G$ be compact and $A_j \in \BO_{K_j}^p(G)$ for $j = 1,2$. For $H \subseteq G$ and $f \in M^p(G)$ with $\supp f \subseteq H$ we get $\supp(A_1f) \in HK_1$ and thus $\supp(A_2A_1f) \in HK_1K_2$. It follows that $A_2A_1 \in \BO_{K_1K_2}^p(G)$. Therefore, $\BO^p(G)$ is an algebra. Taking the closure thus yields a Banach algebra. Finally, assume $\frac{1}{p} + \frac{1}{q} = 1$, let $K \subseteq G$ be compact and let $A \in \BO_K^p(G)$. Note that $\big(G \setminus (HK^{-1})\big)K \subseteq G \setminus H$ for every $H \subseteq G$. Using the duality between $M^p(G)$ and $M^q(G)$ (cf.\ \cite{Feichtinger2002}), we thus obtain
    \[0 = \sp{Af}{g} = \sp{f}{A^*g}\]
    for $f \in M^p(G)$, $g \in M^q(G)$ with $\supp f \subseteq G \setminus (HK^{-1})$ and $\supp g \in H$. In particular, $\supp(A^*g) \subseteq HK^{-1}$. Therefore, $A^* \in \BO_{K^{-1}}^p(G)$. This implies $(\BDO^p(G))^* = \BDO^q(G)$, which also shows that $\BDO^2(G)$ is a $C^*$-algebra.
\end{proof}

In the following, we will make use of the following notation, which plays a key role in quantum harmonic analysis: For $A \in \mathcal L(M^p(G))$ and a finite, possibly complex, Radon measure $\mu$ on $\Xi$ we define
\begin{align*}
    \mu \ast A := \int_{\Xi} \alpha_z(A) \, \mathrm{d}\mu(z)
\end{align*}
with the usual interpretation if $\mu \in L^1(\Xi)$. We refer to \cite{Fulsche_Galke2025, Luef_Skrettingland2021, werner84} for properties of this operation. We note that this integral always exists in the following sense: Since
\begin{align*}
    \mathcal L(M^p(G)) \cong (M^q(G) \widehat{\otimes}_\pi M^p(G))^\ast, 
\end{align*}
using that $M^q(G)^\ast \cong M^p(G)$ (respectively, $M^1(G) \cong (M^{\infty, 0}(G))^\ast$; see Lemma \ref{lem:predual_of_Co_1}), one can see that $\mathcal L(M^p(G))$ is always a dual space. Hence, $\mu \ast A$ is always defined as a Gelfand integral, cf.~\cite[Section 3]{Fulsche_Galke2025}. A key property that we will make use of is the fact that $\| \mu \ast A\| \leq \| \mu\| \cdot \| A\|$, where $\| \mu\|$ refers to the total variation norm of $\mu$.

In order to compare $\mathcal C_1^p(G)$ with the class of band-dominated operators, we will make use of the following proposition, which is also of independent interest:

\begin{proposition}\label{prop:charC1}
    For $1 \leq p < \infty$ we have
    \begin{align*}
        \mathcal L(M^p(G)) &= (\delta_0 \otimes \delta_0) \ast \mathcal L(M^p(G)),\\
        C_{1,0}^p(G) &= (L^1(G) \otimes \delta_0) \ast \mathcal L(M^p(G)),\\
        C_{0,1}^p(G) &= (\delta_0 \otimes L^1(\widehat{G})) \ast \mathcal L(M^p(G)),\\
        C_1^p(G) &= L^1(G \times \widehat{G}) \ast \mathcal L(M^p(G)).
    \end{align*}
\end{proposition}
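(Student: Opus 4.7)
The first equality is immediate from the definition of the Gelfand integral, since $\delta_0 \otimes \delta_0$ is the point mass at $e \in \Xi$ and therefore $(\delta_0 \otimes \delta_0) \ast A = \alpha_e(A) = A$. For the remaining three equalities, the plan is a standard two-direction argument: the $\supseteq$ inclusions follow from the strong continuity of translation on $L^1$, while the reverse $\subseteq$ inclusions---the assertion that every uniformly continuous operator factors through an $L^1$-convolution---are the heart of the proposition and are obtained via Cohen's factorization theorem.

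For the easy direction, using that $\alpha$ is a genuine group action (the $S^1$-valued cocycles in $U_zU_w$ and $U_w^{-1}U_z^{-1}$ cancel), translation invariance of the Haar measure yields
\begin{align*}
    \alpha_z(f \ast A) = \int_\Xi f(w)\alpha_{zw}(A)\,\mathrm{d}w = \int_\Xi f(z^{-1}w)\alpha_w(A)\,\mathrm{d}w = (\alpha_z f) \ast A,
\end{align*}
so that $\|\alpha_z(f \ast A) - f \ast A\|_{op} \leq \|\alpha_z f - f\|_{L^1(\Xi)}\|A\|_{op} \to 0$ as $z \to e$. This gives $L^1(\Xi) \ast \mathcal{L}(M^p(G)) \subseteq \mathcal{C}_1^p(G)$, and restricting $z$ to lie in $G \times \set{1}$ or $\set{e} \times \widehat{G}$ yields the corresponding inclusions for $\mathcal{C}_{1,0}^p(G)$ and $\mathcal{C}_{0,1}^p(G)$.

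For the hard direction, I view $\mathcal{L}(M^p(G))$ as a left Banach module over $L^1(\Xi)$ via $f \cdot A := f \ast A$; associativity $(f \ast g) \cdot A = f \cdot (g \cdot A)$ is a routine Fubini computation using that $\alpha$ is a group action. Since $L^1(\Xi)$ carries a bounded approximate identity $(e_i)$---say, non-negative $L^1$ functions with integral one supported in shrinking neighborhoods of $e$---the essential submodule
\begin{align*}
    \set{A \in \mathcal{L}(M^p(G)) : e_i \ast A \to A \text{ in operator norm}}
\end{align*}
coincides with $\mathcal{C}_1^p(G)$. Indeed, for $A \in \mathcal{C}_1^p(G)$ one has $\|e_i \ast A - A\|_{op} \leq \int_\Xi e_i(z)\|\alpha_z(A) - A\|_{op}\,\mathrm{d}z \to 0$ by uniform continuity, while conversely every essential element is a norm-limit of operators in $L^1(\Xi) \ast \mathcal{L}(M^p(G)) \subseteq \mathcal{C}_1^p(G)$, and $\mathcal{C}_1^p(G)$ is norm-closed. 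Cohen's factorization theorem then upgrades this closed description to the unconditional equality $\mathcal{C}_1^p(G) = L^1(\Xi) \ast \mathcal{L}(M^p(G))$. The same argument applied to the module structures $f \cdot A := (f \otimes \delta_0) \ast A$ for $f \in L^1(G)$ and $g \cdot A := (\delta_0 \otimes g) \ast A$ for $g \in L^1(\widehat{G})$---whose underlying algebras each possess a bounded approximate identity---produces the remaining two factorizations.

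The principal obstacle is ensuring the module-theoretic framework applies correctly in the non-Hilbert, non-discrete modulation-space setting: one must verify that $\mu \ast A$ is a well defined Gelfand integral on $\mathcal{L}(M^p(G))$ (as the paper has already arranged via the duality $\mathcal{L}(M^p(G)) \cong (M^q(G) \widehat{\otimes}_\pi M^p(G))^\ast$), that module associativity holds, and that the bounded approximate identities behave as required; once these are in place, the remainder of the proof reduces to translation continuity on $L^1$ and the classical Cohen factorization theorem.
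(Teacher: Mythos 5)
Your proposal is correct and follows essentially the same route as the paper: the easy inclusion via norm-continuity of translations on $L^1$, density of the factorized operators via a bounded approximate identity, and the Cohen--Hewitt factorization theorem to remove the closure. The only cosmetic difference is that you set up the full $L^1(\Xi)$-module case and then specialize, whereas the paper proves the $\mathcal C_{1,0}^p(G)$ identity explicitly and notes that the remaining ones follow analogously.
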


In each case, $\delta_0$ denotes the Dirac measure corresponding to the identity of the corresponding group.

\begin{proof}
    For the first equality there is of course nothing to prove; it is mainly stated to point out the analogy with the other three cases. The fourth identity is standard in quantum harmonic analysis. The second and third equality are of course proven similarly. We only prove the equality for $C_{1,0}^p(G)$, the equality for $C_{0,1}^p(G)$ follows in the same way.

    First of all, for $f \in L^1(G)$ and $A \in\mathcal L(M^p(G))$ we have
    \begin{align*}
        \| \alpha_{(x,1)}((f \otimes \delta_0) \ast A) - (f \otimes \delta_0) \ast A\| &= \| ((\alpha_x(f) - f) \otimes \delta_0) \ast A\|\\
        &\leq \| \alpha_x(f) - f\|_{L^1(G)} \| A\|\\
        &\to 0
    \end{align*}
    as $x \to e$. This proves $(L^1(G) \otimes \delta_0) \ast A \in C_{1,0}^p(G)$. Now let $A \in C_{1,0}^p(G)$. If $(g_\gamma)_{\gamma \in \Gamma}$ is a bounded approximate unit of $L^1(G)$, then $(g_\gamma \otimes \delta_0) \ast A \to A$ in operator norm by standard arguments. Hence, $A \in \overline{(L^1(G) \otimes \delta_0) \ast \mathcal L(M^p(G))}$. So far, we have proven that
    \begin{align*}
        C_{1,0}^p(G) = \overline{(L^1(G) \otimes \delta_0) \ast \mathcal L(M^p(G))}.
    \end{align*}
    The Cohen-Hewitt factorization theorem now shows that the closure on the right-hand side is actually redundant.
\end{proof}
Let $A \in \mathcal L(M^p(G))$. Then, we have the following continuous maps:
\begin{align*}
    S_0(G) = M^1(G) \hookrightarrow M^p(G) \overset{A}{\longrightarrow} M^p(G) \hookrightarrow M^\infty(G) = S_0'(G).
\end{align*}
Hence, we can consider $A$ as a bounded linear operator from $S_0(G)$ to $S_0'(G)$. By the outer kernel theorem for $S_0(G)$ (cf.\ \cite[Theorem 9.3]{jakobsen18}), every bounded linear operator from $S_0(G)$ to $S_0'(G)$ is represented by an integral kernel $k_A \in S_0'(G \times G)$, given by the relation:
\begin{align*}
    \langle A \varphi, \psi\rangle_{L^2(G)} = \langle k_A, \overline{\varphi} \otimes \psi\rangle_{L^2(G \times G)}, \quad \varphi, \psi \in S_0(G).
\end{align*}
Here, we chose the convention that the inner product is antilinear in the second entry,
\begin{align*}
    \langle \varphi, \psi\rangle_{L^2(G)} = \int_G \varphi(x) \overline{\psi(x)} \, \mathrm{d}x, \quad \varphi, \psi \in S_0(G),
\end{align*}
with the appropriate extension to $\varphi \in S_0'(G)$. Furthermore, the tensor product $\varphi \otimes \psi$ is bilinear, that is, $\varphi \otimes \psi \in S_0(G \times G)$ is defined for $\varphi, \psi \in S_0(G)$ by $\varphi \otimes \psi(x, y) = \varphi(x) \psi(y)$. 

We will need the following lemma:

\begin{lemma}
    Let $1 \leq p \leq \infty$ and $A \in \mathcal L(M^p(G))$ and $g \in L^1(\widehat{G})$. Denote by $k_A \in S_0'(G \times G)$ the integral kernel of $A$. Then, the integral kernel of $(\delta_0 \otimes g) \ast A$ is given by $h_g \cdot k_A$. Here, the function $h_g \from G \times G \to \C$ is defined by
    \begin{align*}
        h_g(x,y) = \widehat{g}(x^{-1}y).
    \end{align*}
\end{lemma}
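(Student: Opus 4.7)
The plan is to compute the integral kernel of $\alpha_{(e,\nu)}(A)$ for fixed $\nu\in\widehat G$ and then integrate in $\nu$ against $g\,d\nu$.

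First, since $U_{(x,\nu)}f(y)=a(x,\nu)\nu(y)f(x^{-1}y)$, one has $U_{(e,\nu)}=a(e,\nu)M_\nu$, where $M_\nu$ denotes multiplication by the character $\nu$. The unitary phase $a(e,\nu)\in S^1$ cancels in the conjugation, so $\alpha_{(e,\nu)}(A)=M_\nu A M_{\overline\nu}$. Pairing with $\overline\varphi\otimes\psi$ for $\varphi,\psi\in S_0(G)$ and using $M_\nu^\ast=M_{\overline\nu}$,
\[\langle\alpha_{(e,\nu)}(A)\varphi,\psi\rangle_{L^2(G)}=\langle A(M_{\overline\nu}\varphi),M_{\overline\nu}\psi\rangle_{L^2(G)}=\langle k_A,\overline{M_{\overline\nu}\varphi}\otimes M_{\overline\nu}\psi\rangle_{L^2(G\times G)}.\]
The tensor product on the right evaluates pointwise to $\nu(x)\overline{\nu(y)}\overline{\varphi(x)}\psi(y)=\nu(xy^{-1})(\overline\varphi\otimes\psi)(x,y)$. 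With the sesquilinear multiplication convention $\langle F\cdot T,\phi\rangle=\langle T,\overline F\,\phi\rangle$ on $S_0'(G\times G)$, this identifies the integral kernel of $\alpha_{(e,\nu)}(A)$ as $\nu(x^{-1}y)\cdot k_A$.

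Next I would integrate against $g(\nu)\,d\nu$. Since $(\delta_0\otimes g)\ast A$ is a Gelfand integral with respect to the weak-$\ast$ topology on $\Lc(M^p(G))\cong(M^q(G)\widehat\otimes_\pi M^p(G))^\ast$, evaluating against $\overline\varphi\otimes\psi\in M^q(G)\widehat\otimes_\pi M^p(G)$ commutes with the $\nu$-integral. A Fubini-type swap then gives
\[\langle k_{(\delta_0\otimes g)\ast A},\overline\varphi\otimes\psi\rangle=\Bigl\langle k_A,\Bigl(\int_{\widehat G}g(\nu)\nu(xy^{-1})\,d\nu\Bigr)(\overline\varphi\otimes\psi)\Bigr\rangle.\]
Under the Fourier convention $\widehat g(z)=\int_{\widehat G}g(\nu)\overline{\nu(z)}\,d\nu$ together with the identity $\nu(xy^{-1})=\overline{\nu(x^{-1}y)}$, the inner integral equals $\widehat g(x^{-1}y)=h_g(x,y)$. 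Unwinding the multiplication convention a final time yields $k_{(\delta_0\otimes g)\ast A}=h_g\cdot k_A$, as claimed.

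The main obstacle is to make precise the distributional product $h_g\cdot k_A$ and to justify the Fubini interchange. Both amount to showing that $h_g$ is a pointwise multiplier on $S_0(G\times G)$. Via the topological automorphism $(x,y)\mapsto(x,x^{-1}y)$ of $G\times G$, under which $h_g$ becomes $1\otimes\widehat g$, it suffices to verify that $\widehat g$ is a pointwise multiplier of $S_0(G)$. But for $\phi\in S_0(G)$ one has $\Fc(\widehat g\cdot\phi)=g\ast\widehat\phi$, which lies in $S_0(\widehat G)$ because the Feichtinger algebra is closed under convolution with $L^1$-functions. Hence $\widehat g\cdot\phi\in S_0(G)$, which both establishes the multiplier property and legitimizes the Bochner integration step above.
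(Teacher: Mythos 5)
Your argument is correct, and its core coincides with the paper's: both conjugate the test functions by the modulation operators, pair with $k_A$, and identify the resulting $\nu$-integral with $\widehat{g}(x^{-1}y)$. Where you genuinely differ is in justifying the two delicate points, namely that $h_g\cdot k_A$ is meaningful and that the $\nu$-integral may be pulled through the pairing with $k_A$. The paper handles both simultaneously by noting that $\nu\mapsto\overline{U_{(e,\nu^{-1})}\varphi}\otimes U_{(e,\nu^{-1})}\psi$ is bounded and continuous with values in $S_0(G\times G)$, so that $h_g\cdot(\overline{\varphi}\otimes\psi)$ is realized directly as a Gelfand--Pettis integral in $S_0(G\times G)$; the product $h_g k_A$ is then only ever tested against elementary tensors. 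You instead prove outright that $h_g$ is a pointwise multiplier of $S_0(G\times G)$, pulling it back under the shear automorphism $(x,y)\mapsto(x,x^{-1}y)$ to $1\otimes\widehat{g}$ and using that $S_0$ is a pointwise module over Fourier transforms of $L^1$-functions. This yields a marginally stronger conclusion --- $h_g k_A$ is defined against all of $S_0(G\times G)$, not merely against elementary tensors --- at the price of importing two further standard facts about the Feichtinger algebra: invariance under topological group automorphisms, and the tensor identity $S_0(G\times G)\cong S_0(G)\widehat{\otimes}S_0(G)$ needed to promote the multiplier $\widehat{g}$ on $S_0(G)$ to $1\otimes\widehat{g}$ on $S_0(G\times G)$. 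Both routes are sound; the only nit is that with your stated conventions $\Fc(\widehat{g}\cdot\phi)$ is the convolution of $\widehat{\phi}$ with the reflection of $g$ rather than with $g$ itself, which is immaterial since $L^1(\widehat{G})$ is reflection-invariant.
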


\begin{proof}
Let $\varphi, \psi \in S_0(G)$. For $g \in \mathcal S_0(\widehat{G})$ we have
\begin{align*}
    \langle ((\delta_0 \otimes g) \ast A) \varphi, \psi\rangle_{L^2(G)} &= \int_{\widehat{G}}g(\nu) \langle AU_{(e, \nu^{-1})}\varphi, U_{(e, \nu^{-1})}\psi\rangle \, \mathrm{d}\nu\\
    &= \int_{\widehat{G}} g(\nu) \langle k_A, \overline{U_{(e,\nu^{-1})}\varphi}\otimes U_{(e,\nu^{-1})}\psi \rangle \, \mathrm{d}\nu.
\end{align*}
We note that $g \in L^1(\widehat{G})$. Furthermore, the function $\nu \mapsto \overline{U_{(e,\nu^{-1})}\varphi} \otimes U_{(e,\nu^{-1})}\psi$ is bounded and continuous in $S_0(G \times G)$. Hence, the integral
\begin{align*}
    \int_{\widehat{G}} g(\nu) \overline{U_{(e,\nu^{-1})}\varphi}\otimes U_{(e,\nu^{-1})}\psi \, \mathrm{d}\nu
\end{align*}
exists, say, as a Gelfand-Pettis-integral in $S_0(G \times G)$, and it satisfies pointwise:
\begin{align*}
    \left [\int_{\widehat{G}} g(\nu) \overline{U_{(e,\nu^{-1})}\varphi}\otimes U_{(e,\nu^{-1})}\psi \, \mathrm{d}\nu \right ](x,y) &= \int_{\widehat{G}} g(\nu) \nu(x) \overline{\varphi(x)} \nu^{-1}(y) \psi(y) \, \mathrm{d}\nu\\
    &= \overline{\varphi(x)} \psi(y) \int_{\widehat{G}} g(\nu) \nu(y^{-1}x) \, \mathrm{d}\nu\\
    &= \overline{\varphi(x)} \psi(y) \widehat{g}(x^{-1}y).
\end{align*}
Therefore, we see that (by basic properties of the Gelfand--Pettis-integral, pairing with $k_A$ can be pulled out of the integral): 
\begin{align*}
    \langle (\delta_0 \otimes g) \ast A \varphi, \psi\rangle &= \int_{\widehat{G}} g(\nu) \langle k_A, \overline{U_{(e, \nu^{-1})}\varphi}\otimes U_{(e, \nu^{-1})}\psi \rangle \, \mathrm{d}\nu\\
    &= \langle k_A, \overline{h_g} \overline{\varphi} \otimes \psi\rangle\\
    &= \langle h_g k_A, \overline{\varphi} \otimes \psi\rangle,
\end{align*}
which concludes the proof.
\end{proof}

We will also make use of the following fact:

\begin{lemma}\label{lemma:support}
    Let $K \subseteq G$ be compact and $A \in \mathcal L(M^p(G))$, where $1 \leq p \leq \infty$. Then, we have $A \in \operatorname{BO}_K^p(G)$ if and only if the integral kernel of $A$ satisfies
    \begin{align*}
        \supp(k_A) \subseteq \{ (x, xy) \in G \times G: ~x \in G, y\in K\}.
    \end{align*}
\end{lemma}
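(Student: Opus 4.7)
My plan is to translate statements about the support of the kernel $k_A$ into statements about pairings $\sp{A\varphi}{\psi}_{L^2(G)} = \sp{k_A}{\overline{\varphi} \otimes \psi}_{L^2(G \times G)}$ for $\varphi, \psi \in S_0(G)$, and back. Write $C := \set{(x, xy) : x \in G, y \in K}$; this is closed in $G \times G$ as the preimage of the compact set $K$ under the continuous map $(x, z) \mapsto x^{-1} z$. As a preparatory step, I would observe that, since $S_0(G)$ is dense in $M^p(G)$ for $p < \infty$ and $A$ is continuous, the band-operator property of Definition \ref{def:BDO} need only be checked against $\varphi \in S_0(G)$: if $f \in M^p(G)$ has $\supp f \subseteq H$, then mollifying $f$ with a compactly supported $S_0$-approximate identity $(g_n)$ gives $f_n := g_n \ast f \to f$ in $M^p$ with $\supp f_n$ contained in a chosen open neighborhood of $H$, and the pairings $\sp{A f_n}{\psi}$ converge to $\sp{A f}{\psi}$ for $\psi \in S_0(G) \subseteq M^q(G)$.

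For the direction $(\Leftarrow)$, I assume $\supp k_A \subseteq C$ and take $\varphi \in S_0(G)$ with $\supp \varphi \subseteq H$ and $\psi \in S_0(G)$ with $\supp \psi \cap HK = \emptyset$. Then $\supp(\overline{\varphi} \otimes \psi) \subseteq H \times (G \setminus HK)$ is disjoint from $C$ (if $(x, xy)$ lay in the intersection with $y \in K$, then $xy \in HK$, contradicting $xy \notin HK$), so $\sp{A\varphi}{\psi} = \sp{k_A}{\overline{\varphi} \otimes \psi} = 0$. This gives $\supp(A\varphi) \subseteq \overline{HK}$ on the $S_0$-level, and the preparatory step upgrades this to $A \in \BO_K^p(G)$.

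For the direction $(\Rightarrow)$, I fix $(x_0, z_0) \notin C$, so that $x_0^{-1} z_0 \notin K$. By continuity of the group operation and closedness of $K$, I choose open neighborhoods $U_0 \ni x_0$, $W_0 \ni z_0$ with $U_0 K \cap W_0 = \emptyset$, together with smaller open neighborhoods $U_1, W_1$ of $x_0, z_0$ whose closures sit inside $U_0, W_0$. For any $\varphi, \psi \in S_0(G)$ with $\supp \varphi \subseteq U_0$, $\supp \psi \subseteq W_0$, the band-operator hypothesis forces $\supp(A\varphi) \subseteq U_0 K$, which is disjoint from $\supp \psi$, so $\sp{k_A}{\overline{\varphi} \otimes \psi} = \sp{A\varphi}{\psi} = 0$. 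To extend this vanishing from elementary tensors to arbitrary $\Phi \in S_0(G \times G)$ with $\supp \Phi \subseteq U_1 \times W_1$, I pick bump functions $\chi_1, \chi_2 \in S_0(G)$ with $\chi_j \equiv 1$ on $\overline{U_j}, \overline{W_j}$ respectively and $\supp \chi_1 \subseteq U_0$, $\supp \chi_2 \subseteq W_0$. Then $\Phi = (\chi_1 \otimes \chi_2) \Phi$ since $\chi_1 \otimes \chi_2 \equiv 1$ on $\supp \Phi$. Using the projective tensor product factorization $S_0(G \times G) \cong S_0(G) \widehat{\otimes}_\pi S_0(G)$, I expand $\Phi = \sum_j \varphi_j \otimes \psi_j$ with $\sum_j \|\varphi_j\|_{S_0} \|\psi_j\|_{S_0} < \infty$; the Banach-algebra property of $S_0(G)$ under pointwise multiplication ensures that $\sum_j (\chi_1 \varphi_j) \otimes (\chi_2 \psi_j)$ converges to $\Phi$ in $S_0(G \times G)$. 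Applying $k_A$ termwise and using the elementary-tensor vanishing yields $\sp{k_A}{\Phi} = 0$, showing $(x_0, z_0) \notin \supp k_A$.

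The hardest part will be this localization step in $(\Rightarrow)$: the band-operator hypothesis supplies kernel vanishing only on elementary tensors $\overline{\varphi} \otimes \psi$, whereas the support of a distribution in $S_0'(G \times G)$ is defined through all test functions with sufficiently small support. The projective tensor product identification, combined with the pointwise Banach-algebra structure of $S_0(G)$ via bump-function cut-offs, is what bridges this gap.
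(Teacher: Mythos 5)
Your overall route is the same as the paper's: both directions are reduced, via the outer kernel theorem, to the vanishing of $\sp{k_A}{\overline{\varphi}\otimes\psi}$ for test functions with controlled supports, and your separation of $U_0K$ from $W_0$ via continuity of $(x,z)\mapsto x^{-1}z$ is a legitimate (in fact slightly slicker) replacement for the paper's regularity-plus-nets argument. The localization step in $(\Rightarrow)$ --- upgrading the vanishing on elementary tensors supported in $U_0\times W_0$ to vanishing on all of $S_0(G\times G)$ supported near $(x_0,z_0)$, via $S_0(G\times G)\cong S_0(G)\widehat{\otimes}_\pi S_0(G)$, cut-off functions and the pointwise Banach algebra property of $S_0$ --- is correct and is actually spelled out more carefully than in the paper, which passes over this point silently.

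The genuine gap is in your preparatory step for $(\Leftarrow)$. To conclude $\sp{Af}{\psi}=0$ from $\sp{Af_n}{\psi}\to\sp{Af}{\psi}$ you must know $\sp{Af_n}{\psi}=0$, and for that you apply the already-established $S_0$-level vanishing to $f_n=g_n\ast f$; but $g_n\ast f$ does \emph{not} lie in $S_0(G)$ for general $f\in M^p(G)$. Convolution with $M^1(G)$ improves the second modulation index, not the first, and concretely for $f\in L^2(\R)=M^2(\R)$ with unbounded support (say $f=\sum_n c_n\1_{[n,n+1]}$ with $(c_n)\in\ell^2\setminus\ell^1$) the mollification $g\ast f$ is not integrable, hence not in $S_0(\R)$. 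So the limit argument is applied to functions for which the hypothesis has not been verified, and the reduction to $S_0$-windows does not close as written. The step is repairable within your framework: for instance, the $S_0$-level statement gives $\supp(A^*\psi)\subseteq\supp(\psi)K^{-1}$ for $\psi\in S_0(G)\cap C_c(G)$ (supports in $S_0'(G)$ being tested against $S_0$-functions), and then $\sp{Af}{\psi}=\sp{f}{A^*\psi}=0$ whenever $\supp f\subseteq H$ and $\supp\psi\cap HK=\emptyset$, since $\supp f$ and $\supp(A^*\psi)$ are then disjoint; alternatively, multiply $S_0$-approximants of $f$ by a cut-off equal to $1$ near $\supp f$ and supported in a controlled neighbourhood, using the relevant module properties of $S_0(G)$ and $M^p(G)$. (To be fair, the paper leaves this extension from $S_0$-windows to all of $M^p(G)$ entirely implicit; but since you made it an explicit step of your proof, the argument you give for it has to work, and it does not.)
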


\begin{proof}
Let $A \in \BO_K^p(G)$, $x,y \in G$ and assume that $x^{-1}y \notin K$. Then we may choose open sets $U,V \subseteq G$ such that $x \in U$, $y \in V$ and $UK \cap V = \emptyset$. To see this, first observe that, since every lca group is regular, there exist open sets $\tilde{U},V \subseteq G$ such that $xK \in \tilde{U}$, $y \in V$ and $U \cap \overline{V} = \emptyset$. So assume that $UK \cap V \neq \emptyset$ for all open sets $U \subseteq G$ with $x \in U$. This means that for every $U \subseteq G$ with $x \in U$ there exist $z_U \in U$ and $k_U \in K$ such that $z_Uk_U \in V$. With respect to the partial order $\supseteq$, the net $(z_U)$ converges to $x$ and by compactness there is a subnet of $(k_U)$ that converges to some $k \in K$. But this implies $xk \in \overline{V}$, which is impossible.

Now, for every $\varphi,\psi \in S_0(G)$ with $\supp\varphi \subseteq U$ and $\supp\psi \subseteq V$ we have $\supp(A\varphi) \subseteq UK$ and hence $\sp{A\varphi}{\psi} = 0$. This shows that $(x,y) \notin \supp (k_A)$. It follows
\[\supp (k_A) \subseteq \set{(x,y) \in G \times G: x^{-1}y \in K}. \]
On the other hand, when the kernel of $A$ satisfies
\begin{align*}
     \supp(k_A) \subset \{ (x, xy) \in G \times G: ~x \in G, y \in K\}
\end{align*}
and $\varphi, \psi \in S_0(G)$ are such that $\supp(\varphi) \subseteq H$, $\supp(\psi) \subseteq G \setminus HK$, then
\begin{align*}
    H \times (G \setminus HK) \cap \supp(k_A) = \emptyset,
\end{align*}
which implies
\begin{align*}
    \langle A \varphi, \psi\rangle = \langle k_A, \overline{\varphi} \otimes \psi\rangle = 0.
\end{align*}
Since $\psi$ was arbitrary with $\supp(\psi) \subseteq G \setminus HK$, this shows $\supp(A\varphi) \subseteq HK$. Hence, $A$ is a band operator with band-width at most $K$.
\end{proof}

\begin{theorem}
    Let $1 \leq p \leq \infty$. Then the following equality holds:
    \begin{align*}
        \mathcal C_{0,1}^p(G) = \operatorname{BDO}^p(G).
    \end{align*}
\end{theorem}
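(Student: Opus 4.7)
The plan is to exploit Proposition~\ref{prop:charC1}, which represents $\mathcal C_{0,1}^p(G) = (\delta_0 \otimes L^1(\widehat{G})) \ast \Lc(M^p(G))$, together with the preceding kernel lemma identifying the integral kernel of $(\delta_0 \otimes g) \ast B$ as $h_g \cdot k_B$ with $h_g(x,y) = \widehat{g}(x^{-1}y)$. The crucial observation is that $h_g$ depends only on $x^{-1}y$, which is exactly the variable controlling the band structure through Lemma~\ref{lemma:support}. Both inclusions will therefore reduce to choosing $g \in L^1(\widehat{G})$ with either $\widehat{g} \equiv 1$ on a prescribed compact set, or $\widehat{g}$ of compact support.

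For $\BDO^p(G) \subseteq \mathcal C_{0,1}^p(G)$: First, $\mathcal C_{0,1}^p(G)$ is norm-closed by a standard $\epsilon/3$ argument using $\|\alpha_{(e,\nu)}\|_{op} = 1$, so it suffices to show $\BO^p(G) \subseteq \mathcal C_{0,1}^p(G)$. Given $A \in \BO_K^p(G)$ with $K \subseteq G$ compact, I invoke the regularity of the Fourier algebra $A(G) := \{\widehat{g} : g \in L^1(\widehat{G})\}$ for the lca group $G$ to produce $g \in L^1(\widehat{G})$ with $\widehat{g}$ compactly supported and $\widehat{g} \equiv 1$ on an open neighborhood of $K$. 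By Lemma~\ref{lemma:support}, $\supp k_A \subseteq \{(x,y): x^{-1}y \in K\}$, so $h_g$ is constantly $1$ on a neighborhood of $\supp k_A$, forcing $h_g \cdot k_A = k_A$. The kernel lemma therefore gives $(\delta_0 \otimes g) \ast A = A$, and Proposition~\ref{prop:charC1} yields $A \in \mathcal C_{0,1}^p(G)$.

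For $\mathcal C_{0,1}^p(G) \subseteq \BDO^p(G)$: By Proposition~\ref{prop:charC1}, write $A = (\delta_0 \otimes g) \ast B$ with $g \in L^1(\widehat{G})$ and $B \in \Lc(M^p(G))$. Using density of $\{g \in L^1(\widehat{G}) : \widehat{g} \in C_c(G)\}$ in $L^1(\widehat{G})$, I pick $g_n \in L^1(\widehat{G})$ with $\widehat{g_n} \in C_c(G)$ and $g_n \to g$ in $L^1(\widehat{G})$. The kernel lemma combined with Lemma~\ref{lemma:support} shows each $(\delta_0 \otimes g_n) \ast B$ is a band operator of band-width $\supp \widehat{g_n}$, and the standard estimate
\[\|(\delta_0 \otimes g_n) \ast B - A\|_{op} \leq \|g_n - g\|_{L^1(\widehat{G})} \|B\|_{op} \to 0\]
places $A$ in $\BDO^p(G)$.

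The main obstacle is establishing the density of $\{g \in L^1(\widehat{G}) : \widehat{g} \in C_c(G)\}$ in $L^1(\widehat{G})$ together with the stronger statement that one can prescribe $\widehat{g} \equiv 1$ on any given compact set. Equivalently, this is the existence of a bounded approximate identity of compactly supported elements in the Fourier algebra $A(G)$, which is a standard consequence of amenability of lca groups (and is trivial in the discrete case, where trigonometric polynomials are already dense). Once this is granted, all remaining steps are routine manipulations within the quantum harmonic analysis framework developed earlier in the paper.
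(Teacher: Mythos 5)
Your proposal is correct and follows essentially the same route as the paper: both directions reduce, via Proposition~\ref{prop:charC1} and the kernel identity $k_{(\delta_0\otimes g)\ast B}=h_g k_B$ combined with Lemma~\ref{lemma:support}, to choosing $g\in L^1(\widehat G)$ with $\widehat g$ compactly supported (for one inclusion) or $\widehat g\equiv 1$ on the band $K$ (for the other), the latter being exactly the regularity statement the paper cites from Reiter--Stegeman. Your only cosmetic deviations are making the norm-closedness of $\mathcal C_{0,1}^p(G)$ explicit and attributing the density of $\{g:\widehat g\in C_c(G)\}$ to amenability rather than to the standard regularity of the Fourier algebra, neither of which changes the argument.
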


\begin{proof}
    We first discuss the inclusion ``$\subseteq$''. Let $A \in \mathcal C_{0,1}^p(G)$. By Proposition \ref{prop:charC1}, we may assume that we have $A = (\delta_0 \otimes g) \ast B$, with $g \in L^1(\widehat{G})$ and $B \in \Lc(M^p(G))$. By density, we may also assume that $\widehat{g}$ is compactly supported. In this case, the integral kernel of $A$ is given by $k_A = h_g k_B$, where $k_B$ is the kernel of $B$. Since $\supp(h_g) \subseteq \{ (x,xy) \in G \times G: ~x \in G, y \in \supp(\widehat{g})\}$, we see that
    \begin{align*}
        \supp(k_A) \subseteq \{ (x,xy) \in G \times G: ~x \in G, y \in K\}
    \end{align*}
    for $K := \supp(\widehat{g})$ compact. Therefore, Lemma \ref{lemma:support} shows that $A \in \operatorname{BO}_K^p(G)$. This shows the first inclusion.

    For the inclusion ``$\supseteq$'' let $K \subseteq G$ be compact and $A \in \operatorname{BO}_K^p(G)$. Let $g \in L^1(\widehat{G})$ be such that $\widehat{g} = 1$ on $K$ (this always exists, cf.\ \cite[Prop.\ 5.1.9]{Reiter_Stegeman2000}). Then, by Lemma \ref{lemma:support}, the kernel of $A$ satisfies
    \begin{align*}
        k_A = h_g k_A
    \end{align*}
    such that $A = (\delta_0 \otimes g) \ast A$, proving that $A \in \mathcal C_{0,1}^p(G)$. This finishes the proof.
\end{proof}

By Pontryagin duality, the following corollary is immediate.

\begin{corollary}\label{corollary:equality_algebras}
    Let $1 \leq p \leq \infty$. The following statements hold:
    \begin{align*}
        \mathcal C_{1,0}^p(G) &= \mathcal F^{-1} \operatorname{BDO}^p(\widehat{G}) \mathcal F,\\
        \mathcal C_1^p(G) &= \operatorname{BDO}^p(G) \cap (\mathcal F^{-1} \operatorname{BDO}^p(\widehat{G}) \mathcal F).
    \end{align*}
\end{corollary}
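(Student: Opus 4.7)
The plan is to deduce Corollary \ref{corollary:equality_algebras} from the preceding theorem together with Pontryagin duality. For the first equality, I would apply the theorem to the lca group $\widehat{G}$ in place of $G$. Since the Pontryagin dual of $\widehat G$ is canonically $G$, the theorem (with $G$ replaced by $\widehat G$) reads $\mathcal C_{0,1}^p(\widehat G) = \BDO^p(\widehat G)$, where the subscript $(0,1)$ now corresponds to continuity along the second factor of the phase space $\widehat G \times G$. It then remains to transport this identity across the Fourier transform and identify $\Fc^{-1} \mathcal C_{0,1}^p(\widehat G) \Fc$ with $\mathcal C_{1,0}^p(G)$.

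For that transport, the two ingredients I would use are: (i) $\Fc$ extends to an isometric isomorphism $M^p(G) \to M^p(\widehat G)$ for all $1 \le p \le \infty$ (a standard property of modulation spaces, cf.\ \cite{jakobsen18,Feichtinger81}), so that $B \mapsto \Fc B \Fc^{-1}$ is a Banach algebra isomorphism $\Lc(M^p(\widehat G)) \to \Lc(M^p(G))$; (ii) the Fourier transform intertwines pure translation on $L^2(G)$ with pure modulation on $L^2(\widehat G)$ up to a unimodular scalar stemming from the factor $a$, i.e.\ there is an identity of the form
\[\Fc\, U_{(x,1)}\, \Fc^{-1} = c(x)\, U'_{(1,x)}\]
with $|c(x)|=1$, where $U'$ denotes the standard projective representation of $\widehat G \times G$ on $L^2(\widehat G)$. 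Since the conjugation $\alpha_\xi(B) = U_\xi B U_\xi^{-1}$ is insensitive to scalar factors on $U_\xi$, this yields the equivariance
\[\Fc\, \alpha_{(x,1)}(B)\, \Fc^{-1} = \alpha'_{(1,x)}(\Fc B \Fc^{-1}),\]
so $B \in \mathcal C_{1,0}^p(G)$ if and only if $\Fc B \Fc^{-1} \in \mathcal C_{0,1}^p(\widehat G)$. Combined with the theorem applied to $\widehat G$, this proves the first equality. The second equality is then immediate from the definition $\mathcal C_1^p(G) = \mathcal C_{1,0}^p(G) \cap \mathcal C_{0,1}^p(G)$ together with the theorem for $\mathcal C_{0,1}^p(G)$ and the first equality for $\mathcal C_{1,0}^p(G)$.

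The main obstacle is purely bookkeeping: one needs to track the cocycle phase $a$ and the canonical identification $\widehat{\widehat G} \cong G$ to verify that the Fourier transform really does send the translation part of the projective representation on $L^2(G)$ to the modulation part of the corresponding standard representation on $L^2(\widehat G)$. Since the representations are standard in the sense of Section \ref{sec:operator_algebras}, this amounts to a direct computation on $L^2$, and the unimodular scalar factors disappear under conjugation $\alpha_\xi$, so they do not affect the operator-norm continuity condition that defines $\mathcal C_{1,0}^p$ and $\mathcal C_{0,1}^p$. No further analytical input beyond the preceding theorem is needed.
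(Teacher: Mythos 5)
Your proposal is correct and follows exactly the route the paper intends: the paper's entire proof is the remark that the corollary is ``immediate by Pontryagin duality,'' and your argument simply spells out that duality, namely applying the preceding theorem to $\widehat{G}$, using that $\Fc\from M^p(G)\to M^p(\widehat{G})$ is an isomorphism, and observing that $\Fc$ intertwines $\alpha_{(x,1)}$ on $\Lc(M^p(G))$ with $\alpha'_{(1,x)}$ on $\Lc(M^p(\widehat{G}))$ up to a unimodular scalar that cancels under conjugation. The second identity then follows from $\mathcal C_1^p(G)=\mathcal C_{1,0}^p(G)\cap\mathcal C_{0,1}^p(G)$ exactly as you say.
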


As an application of the previous results, we discuss the characterization of compact operators on $M^p(G)$. We restrict the discussion to the reflexive range, that is, $1 < p < \infty$. Nevertheless, suitable modifications allow for the application of the present methods also for the end-point cases $p \in \set{1, \infty}$ (see \cite{Lindner2006} or \cite{Rabinovich_Roch_Silbermann2004} for $G = \mathbb Z^d$ or \cite{Fulsche2024} for a discussion of the case $G = \mathbb R^d$).

For formulating the theorem, we note that for each $B \in \mathcal C_1^p(G)$, the (norm) continuous map
\begin{align*}
    G \times \widehat{G} \to \mathcal C_1^p(G), \quad (x, \xi) \mapsto \alpha_{(x, \xi)}(B)
\end{align*}
extends to a strongly continuous map
\begin{align*}
    \mathcal M(\BUC(G \times \widehat{G})) \to \mathcal C_1^p(G), \quad z \mapsto \alpha_z(B).
\end{align*}
Here, $\mathcal M(\BUC(G \times \widehat{G}))$ denotes the maximal ideal space of the bounded uniformly continuous functions on $G \times \widehat{G}$, interpreted as a compactification of $G \times \widehat{G}$. The previous statement can be proven analogously to the Hilbert space case $p = 2$ just as in \cite{Fulsche_Luef_Werner2024}.

Now, imitating the proof of \cite[Lemma 3.8]{Fulsche_Luef_Werner2024}, we obtain the following result. Here, we write $\partial \Xi := \mathcal M(\BUC(G \times \widehat{G})) \setminus (G \times \widehat{G})$.

\begin{theorem} \label{thm:compactness}
    Let $1 < p < \infty$ and $B\in \mathcal L(M^p(G))$. Then $B$ is compact if and only if $B \in \mathcal C_1^p(G) = \BDO^p(G) \cap (\Fc^{-1} \BDO^p(\widehat{G}) \Fc)$ and $\alpha_z(B) = 0$ for every $z \in \partial \Xi$.
\end{theorem}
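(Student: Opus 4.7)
The necessity direction proceeds in two parts. If $B$ is compact, I would first verify $B \in \mathcal{C}_1^p(G)$: since the projective representation $U$ is strongly continuous and isometric on $M^p(G)$, the conjugation map $z \mapsto \alpha_z(B) = U_z B U_z^{-1}$ is strongly continuous, and the compactness of $B$ upgrades this to norm continuity (strong convergence is uniform on the relatively compact sets $B(\text{ball of }M^p) \cup B^*(\text{ball of }M^q)$). For the vanishing condition, take a net $z_n \to z \in \partial \Xi$, which means $z_n$ eventually escapes every compact subset of $\Xi$. By a Riemann--Lebesgue-type argument, using that for compactly supported test functions $f, g$ the matrix coefficient $\sp{U_{z_n}^{-1}f}{g}$ is essentially a Wavelet transform $\mathcal{W}_g(f)$ that vanishes once $z_n$ leaves a large enough compact set, one obtains $U_{z_n}^{-1} \to 0$ in the weak operator topology of $M^p(G)$. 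Compactness of $B$ converts this to strong convergence $B U_{z_n}^{-1} f \to 0$, and composing with the isometry $U_{z_n}$ yields $\alpha_{z_n}(B) f \to 0$, so $\alpha_z(B) = 0$.

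For the sufficiency direction, I would adapt the argument of \cite[Lemma~3.8]{Fulsche_Luef_Werner2024}. By Proposition~\ref{prop:charC1} and Cohen--Hewitt factorization, $B$ admits a representation $B = \psi \ast A$ with $\psi \in L^1(\Xi)$ and $A \in \mathcal{L}(M^p(G))$, which a density argument reduces to the case $\psi \in C_c(\Xi)$. Fixing a window $\varphi_0 \in \mathcal{H}_1 \setminus \set{0}$, Godement's orthogonality (Lemma~\ref{lemma:WaveletTrafoIsometric}) yields the resolution of identity $I = \|\varphi_0\|_{\mathcal H}^{-2} \int_\Xi \alpha_z(P_0) \, \mathrm{d}z$ weakly, where $P_0$ is the rank-one operator $f \mapsto \sp{f}{\varphi_0} \varphi_0$. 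Inserting this into $B = BI$ and rearranging, one may write $B$ as an integral whose operator-valued integrand is controlled pointwise by $z \mapsto \alpha_z(B)\varphi_0$. The hypothesis $\alpha_z(B) = 0$ on $\partial\Xi$ should then imply that truncating this integral to a sufficiently large compact $K \subseteq \Xi$ produces an operator $B_K$ with $\|B - B_K\|_{op}$ small, while each $B_K$ is compact because its range is contained in the closed absolutely convex hull of the set $\set{\alpha_z(B)\varphi_0 : z \in K}$, which is norm-compact in $M^p(G)$ by norm-continuity of $z \mapsto \alpha_z(B)\varphi_0$ (a consequence of $B \in \mathcal{C}_1^p(G)$) together with compactness of $K$.

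The principal obstacle is the upgrade from the strong pointwise vanishing $\alpha_z(B) \to 0$ on $\partial\Xi$ provided by the hypothesis, to the operator-norm smallness of the tail $\|B - B_K\|_{op}$ needed above. I would address this via a contradiction of limit-operator type: if $\|B - B_K\|_{op}$ were bounded away from zero as $K$ exhausted $\Xi$, one would extract unit vectors $f_K \in M^p(G)$ realizing the defect and, using reflexivity of $M^p(G)$ for $1 < p < \infty$ together with compactness of the compactification $\mathcal M(\BUC(G \times \widehat G))$, pass to a subnet with $f_K \to f_\infty$ weakly in $M^p(G)$ and $z_K \to z_\infty \in \partial\Xi$. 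The limit would produce a nontrivial $\alpha_{z_\infty}(B) f_\infty \neq 0$, contradicting the hypothesis. This reflexivity-based extraction is what restricts the theorem to $1 < p < \infty$, with the endpoint cases requiring the separate modifications alluded to just before the statement of the theorem.
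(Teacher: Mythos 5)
Your necessity direction is sound: compactness of $B$ upgrades the strong continuity of $z \mapsto \alpha_z(B)$ to norm continuity, and the Riemann--Lebesgue step works because $\mathcal W_g(f)$ for $f,g \in M^1(G)$ is a twisted convolution of two $L^2$-functions and hence lies in $C_0(\Xi)$, so $U_{z_\gamma}^{-1} \to 0$ weakly and $\alpha_z(B)=0$ on $\partial\Xi$. The sufficiency direction, however, has a genuine gap at exactly the point you flag as the principal obstacle. Truncating the weak resolution $B = \|\varphi_0\|_{\mathcal H}^{-2}\int_\Xi \langle \cdot, U_z\varphi_0\rangle\, BU_z\varphi_0\,\mathrm{d}z$ to a compact set $K$ does produce compact operators $B_K$, but the tail estimate fails: for $f \in M^p(G)$ the coefficient $z \mapsto \langle f, U_z\varphi_0\rangle$ is controlled only in $L^p(\Xi)$, while the hypothesis only yields $\sup_{z \notin K}\|BU_z\varphi_0\| \to 0$, i.e.\ sup-norm decay; the product of an $L^p$-function with a function vanishing at infinity need not be integrable over $\Xi \setminus K$ uniformly in $f$, so no bound on $\|B - B_K\|_{op}$ results (one only gets convergence $B_K \to B$ in the weak or strong operator topology). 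Your proposed rescue by contradiction does not close this gap: it is not explained how the points $z_K$ are to be extracted from the defect vectors $f_K$ (the defect is an integral over all of $\Xi\setminus K$, not localized at a point), and, more fundamentally, since $\alpha_{z_\infty}(B)$ is only a \emph{strong} limit of the $\alpha_{z_\gamma}(B)$, having $\|\alpha_{z_K}(B)g_K\| \geq c$ for vectors $g_K \rightharpoonup g$ converging merely weakly is perfectly compatible with $\alpha_{z_\infty}(B) = 0$ (shift operators give standard examples), so no contradiction is produced unless one first shows the $g_K$ retain mass on a fixed compact set. Supplying that concentration is precisely where the classical theory invokes property $A$, respectively property $A'$.

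The paper's (referenced, not written out) argument avoids this uniformity problem altogether: after Corollary \ref{corollary:equality_algebras} identifies $\BDO^p(G) \cap (\Fc^{-1}\BDO^p(\widehat G)\Fc)$ with $\mathcal C_1^p(G)$, the compactness characterization inside $\mathcal C_1^p(G)$ follows the correspondence theory of \cite{werner84} and \cite[Lemma 3.8]{Fulsche_Luef_Werner2024}. One observes that vanishing of all $\alpha_z(B)$, $z \in \partial\Xi$, is equivalent to the Berezin-type function $z \mapsto \langle BU_z\varphi_0, U_z\varphi_0\rangle$ lying in $C_0(\Xi)$; convolving this $C_0$-function against the rank-one (hence trace-class) operator $\langle\cdot,\varphi_0\rangle\varphi_0$ yields a compact operator, which amounts to $\psi \ast B$ being compact for the particular $L^1$-function $\psi = \|\varphi_0\|_{\mathcal H}^{-2}|\mathcal W_{\varphi_0}(\varphi_0)|^2$; Wiener's Tauberian theorem then shows $g \ast B$ is compact for all $g \in L^1(\Xi)$, and an approximate identity together with $B \in \mathcal C_1^p(G) = L^1(\Xi) \ast \mathcal L(M^p(G))$ (Proposition \ref{prop:charC1}) gives compactness of $B$. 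If you wish to keep your coherent-state decomposition, you would need to import this Tauberian step, or else an explicit use of property $A'$ from the appendix, in place of the reflexivity-based extraction.
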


Although not written down in the literature so far, the only fact about the previous theorem that does not follow from the usual methods of quantum harmonic analysis (cf.~\cite{werner84, Fulsche_Galke2025, Fulsche_Luef_Werner2024}) is that we can use $\operatorname{BDO}^p(G) \cap (\mathcal F^{-1} \operatorname{BDO}^p(\widehat{G}) \mathcal F)$ instead of $\mathcal C_1^p(G)$. Since, besides the application of Corollary \ref{corollary:equality_algebras}, the proof is an imitation of the results in \cite{werner84} with the tools described in \cite{Fulsche_Galke2025}, we omit the proof. Instead, we focus on two particular instances of this result:

\begin{corollary}\label{cor:compactness_discrete_grp}~
    \begin{itemize}
        \item[(a)] Assume that $G$ is discrete. Then $B \in \mathcal L(M^p(G))$ is compact if and only if $B \in \operatorname{BDO}^p(G)$ and $\alpha_x(B) = 0$ for every $x \in \partial G$.
        \item[(b)] Assume that $G$ is compact. Then $B \in \mathcal L(M^p(G))$ is compact if and only if $B \in \mathcal F^{-1} \operatorname{BDO}^p(\widehat{G}) \mathcal F$ and $\alpha_\xi(B) = 0$ for every $\xi \in \partial \widehat{G}$. 
    \end{itemize}
\end{corollary}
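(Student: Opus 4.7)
The plan is to derive both parts directly from Theorem~\ref{thm:compactness} by specialising the algebra condition $\operatorname{BDO}^p(G)\cap \mathcal F^{-1}\operatorname{BDO}^p(\widehat G)\mathcal F$ and the boundary condition on $\partial\Xi$ to the two situations. The two ingredients needed are, first, that one of the two band-dominated algebras becomes trivial under the hypothesis, and second, that the boundary factors as a product so that the vanishing condition can be reduced to one of the coordinates.

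For part (a), $G$ discrete forces $\widehat G$ to be compact. Choosing $K = \widehat G$ in Definition~\ref{def:BDO} shows that every bounded operator on $M^p(\widehat G)$ is a band operator, so $\operatorname{BDO}^p(\widehat G)=\Lc(M^p(\widehat G))$ and the intersection appearing in Theorem~\ref{thm:compactness} collapses to $\operatorname{BDO}^p(G)$. For the boundary, I would argue that when $\widehat G$ is compact the uniform structure on $\Xi = G\times\widehat G$ is essentially the one on $G$, so that $\operatorname{BUC}(G\times\widehat G)$ is generated as a $C^*$-algebra by $\operatorname{BUC}(G)\otimes C(\widehat G)$; consequently the Gelfand spectrum splits as $\mathcal M(\operatorname{BUC}(G\times\widehat G))=\mathcal M(\operatorname{BUC}(G))\times\widehat G$, yielding $\partial\Xi=\partial G\times\widehat G$ as already remarked in the introduction. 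Using the semigroup relation $\alpha_\xi\alpha_\eta=\alpha_{\xi\eta}$ on operators (the cocycle factors cancel) together with strong continuity of $\alpha_{(e,\xi)}$ on $\mathcal C_1^p(G)$, one obtains
\[
\alpha_{(x,\xi)}(B)=\alpha_{(e,\xi)}\bigl(\alpha_{(x,1)}(B)\bigr)
\]
for every $(x,\xi)\in\partial G\times\widehat G$, where $\alpha_{(x,1)}$ refers to the strongly continuous extension. Since $\alpha_{(e,\xi)}$ is an isomorphism, the condition $\alpha_z(B)=0$ for every $z\in\partial\Xi$ is equivalent to $\alpha_x(B)=0$ for every $x\in\partial G$, which is what we wanted.

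Part (b) is obtained by Pontryagin duality. The group $\widehat G$ is discrete when $G$ is compact, so conjugating by $\mathcal F$ turns $B\in\Lc(M^p(G))$ into an operator $\mathcal F B\mathcal F^{-1}\in\Lc(M^p(\widehat G))$ over a discrete group. Applying (a) to this operator and noting that compactness is preserved by conjugation with a unitary operator (when $p=2$; for general $p$ one uses that $\mathcal F$ is an isomorphism of the relevant modulation spaces) gives the claim. The only check here is that Fourier conjugation on $\Lc(M^p(G))$ intertwines $\alpha_{(e,\xi)}$ with $\alpha_{(\xi,1)}$ on $\Lc(M^p(\widehat G))$, which is routine from Remark~\ref{rem:one}(f).

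The main obstacle is really the clean identification $\partial\Xi=\partial G\times\widehat G$ in the absence of metrisability or second countability assumptions, because it relies on a product decomposition of the uniform compactification that one usually takes for granted. Once that decomposition is in hand (either by an abstract $C^*$-algebraic argument about $\operatorname{BUC}$ or directly from the remark in the introduction), everything else is a short bookkeeping exercise using the semigroup property of $\alpha$ and the strong continuity of its extension, both already established in the course of Theorem~\ref{thm:compactness}.
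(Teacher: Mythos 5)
Your overall strategy is the one the paper follows: specialise Theorem~\ref{thm:compactness}, observe that $\operatorname{BDO}^p(\widehat G)=\Lc(M^p(\widehat G))$ because $\widehat G$ is compact (so one may take $K=\widehat G$ in Definition~\ref{def:BDO}), and then reduce the boundary condition on $\partial\Xi$ to one on $\partial G$ via the identity $\alpha_{(x,\xi)}(B)=\alpha_{(e,\xi)}\bigl(\alpha_{(x,1)}(B)\bigr)$; part (b) is obtained by Pontryagin duality in both treatments. The one place where you diverge is the boundary reduction. You invoke the full product decomposition $\mathcal M(\BUC(G\times\widehat G))=\mathcal M(\BUC(G))\times\widehat G$, which you justify only by the assertion that $\BUC(G\times\widehat G)$ is the $C^*$-tensor product $\BUC(G)\otimes C(\widehat G)$ -- and you yourself flag this as the main obstacle. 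The claim is in fact true for $G$ discrete: a function in $\BUC(G\times\widehat G)$, viewed as a bounded map $G\to C(\widehat G)$, has uniformly equicontinuous and bounded range, hence relatively norm-compact range by Arzel\`a--Ascoli, and such maps are exactly the elements of $C(\beta G,C(\widehat G))\cong\ell^\infty(G)\otimes C(\widehat G)$. If you want to argue along your lines, you need to supply this step. The paper avoids the issue entirely by a weaker, purely net-theoretic statement (Lemma~\ref{lem:product_boundary}): if $(x_\gamma,\xi_\gamma)\to z\in\partial\Xi$, then by compactness of $\widehat G$ the characters $\xi_\gamma$ converge to some $\xi\in\widehat G$, the ``frozen'' net $(x_\gamma,\xi)$ still converges to some $z'\in\partial\Xi$ with $\alpha_z(B)=\alpha_{z'}(B)$, and after passing to a subnet $x_\gamma\to x\in\partial G$ one concludes exactly as you do. So your argument buys a cleaner global picture of $\partial\Xi$ at the cost of an unproven (though provable) compactification identity, while the paper's version only extracts the convergent subnet it actually needs. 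Everything else in your write-up, including the duality argument for (b) and the remark that $\Fc$ intertwines the two one-parameter families of automorphisms, is correct.
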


Here, we use $\alpha_x(B)$ and $\alpha_\xi(B)$ as a shorthand for the strong limits $\lim\limits_{x_\gamma \to x} \alpha_{(x_\gamma,1)}(B)$ and $\lim\limits_{\xi_\gamma \to \xi} \alpha_{(e,\xi_\gamma)}(B)$, respectively. Moreover, by $\partial G$ we technically mean the Stone--\v{C}ech boundary. However, since $G$ is discrete in this instance, $\partial G$ is isomorphic to $\mathcal M(\BUC(G)) \setminus G$. In fact, for this corollary specifically, any compactification of $G$ can be used. Indeed, one might as well reformulate ``$\alpha_x(B) = 0$ for every $x \in \partial G$'' to ``$\alpha_{x_{\gamma}}(B) \to 0$ for any net $(x_{\gamma})_{\gamma \in \Gamma}$ that has no convergent subnet in $G$'' and not use any compactification at all. For a brief discussion about different compactifications we refer to \cite[Section 5]{HaggerSeifert}.

For the proof we need the following lemma.

\begin{lemma} \label{lem:product_boundary}
    Let $G$ be discrete and let $(x_\gamma, \xi_\gamma)_{\gamma \in \Gamma}$ be a net in $\Xi = G \times \widehat{G}$ that converges to some point $z \in \partial \Xi$. Then the following assertions hold.
    \begin{itemize}
        \item[(i)] There is a $\xi \in \widehat{G}$ such that $\xi_\gamma \to \xi$.
        \item[(ii)] The net $(x_\gamma, \xi)_{\gamma \in \Gamma}$ also converges to some point $z' \in \partial \Xi$.
        \item[(iii)] Let $B \in \mathcal C_1(M^p(G))$ and $z,z' \in \partial\Xi$ be as above. Then $\alpha_z(B) = \alpha_{z'}(B)$.
    \end{itemize}
\end{lemma}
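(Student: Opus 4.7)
The plan is as follows. For part (i), observe that $G$ discrete implies $\widehat{G}$ compact, so $\BUC(\widehat{G}) = C(\widehat{G})$ and $\mathcal{M}(\BUC(\widehat{G})) = \widehat{G}$ by Gelfand--Naimark. Every $g \in C(\widehat{G})$ lifts to $\tilde{g}(x, \eta) := g(\eta)$, which lies in $\BUC(\Xi)$. From $(x_\gamma, \xi_\gamma) \to z$ in $\mathcal{M}(\BUC(\Xi))$ we obtain $g(\xi_\gamma) = \tilde{g}(x_\gamma, \xi_\gamma) \to z(\tilde{g})$ for every $g \in C(\widehat{G})$. The functional $g \mapsto z(\tilde{g})$ is then a character of $C(\widehat{G})$, hence equals evaluation at some $\xi \in \widehat{G}$, and this yields $\xi_\gamma \to \xi$.

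For part (ii), I would show that $(x_\gamma, \xi)$ in fact converges to the same point $z$, so one may simply take $z' := z$. Fix $f \in \BUC(\Xi)$ and $\epsilon > 0$. Uniform continuity with respect to the group uniformity on $\Xi$ provides a neighborhood $V$ of the identity in $\widehat{G}$ such that $|f(x, \eta_1) - f(x, \eta_2)| < \epsilon$ whenever $\eta_1^{-1}\eta_2 \in V$, uniformly in $x \in G$. Since $\xi_\gamma \to \xi$, eventually $\xi_\gamma^{-1}\xi \in V$, whence $|f(x_\gamma, \xi_\gamma) - f(x_\gamma, \xi)| < \epsilon$. Combined with $f(x_\gamma, \xi_\gamma) \to z(f)$, this gives $f(x_\gamma, \xi) \to z(f)$ for every $f \in \BUC(\Xi)$. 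Therefore $(x_\gamma, \xi) \to z$ in $\mathcal{M}(\BUC(\Xi))$, and since $z \in \partial\Xi$ we may take $z' = z$.

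Part (iii) is then immediate. Alternatively, without invoking (ii), one can argue directly from $B \in \mathcal{C}_{0,1}^p(G)$:
\[\|\alpha_{(x_\gamma, \xi)}(B) - \alpha_{(x_\gamma, \xi_\gamma)}(B)\| = \|\alpha_{(e, \xi_\gamma^{-1}\xi)}(B) - B\| \to 0,\]
using that $\alpha_{(x_\gamma, \xi_\gamma)}$ acts isometrically and $\xi_\gamma^{-1}\xi \to 1$. Since both nets converge strongly to $\alpha_z(B)$ and $\alpha_{z'}(B)$ respectively under the extension of $\alpha$ to $\mathcal{M}(\BUC(\Xi))$, their strong limits must agree.

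The main technical point is relating convergence in the rather abstract compactification $\mathcal{M}(\BUC(\Xi))$ to concrete coordinate-wise behavior. Compactness of $\widehat{G}$ reduces (i) to a short pullback argument, while uniform continuity of $\BUC$-functions says that $\BUC$ cannot distinguish perturbations of the $\widehat{G}$-coordinate that tend to $1$. This is precisely what allows replacing $\xi_\gamma$ by $\xi$ inside the limit, which is the content of (ii) and, ultimately, (iii).
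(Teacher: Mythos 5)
Your proof is correct, and for part (ii) it takes a genuinely different and in fact stronger route than the paper. Part (i) is essentially the paper's argument (embed $C(\widehat{G})$ into $\BUC(\Xi)$ via the second coordinate and use compactness of $\widehat{G}$), just with the Gelfand-theoretic step spelled out. For part (ii), the paper freezes the second coordinate, i.e.\ applies $z$ to the function $g(x',\xi') := f(x',\xi)$, which only shows that $(x_\gamma,\xi)$ converges to \emph{some} $z' \in \partial\Xi$, a priori different from $z$; the identification of the two limit operators is then deferred to part (iii), where it is obtained from the norm-continuity $\|\alpha_{(e,\xi^{-1}\xi_\gamma)}(B) - B\| \to 0$ available for $B \in \mathcal C_1^p(G)$. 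You instead use uniform continuity of $f \in \BUC(\Xi)$ with respect to the group uniformity to get $f(x_\gamma,\xi_\gamma) - f(x_\gamma,\xi) \to 0$ for \emph{every} $f$, which shows $z' = z$ at the level of the compactification itself, independently of any operator $B$; part (iii) then becomes vacuous. This is a valid and arguably cleaner argument (the perturbation $(e,\xi_\gamma^{-1}\xi)$ tends to the identity, and $\BUC$ cannot see it), and it yields more than the lemma asks for; the paper's version keeps (ii) purely formal and pays for it with the extra hypothesis $B \in \mathcal C_1$ in (iii). Your alternative argument for (iii), via $\|\alpha_{(x_\gamma,\xi)}(B) - \alpha_{(x_\gamma,\xi_\gamma)}(B)\| = \|\alpha_{(e,\xi_\gamma^{-1}\xi)}(B) - B\| \to 0$, is exactly the paper's proof of (iii).
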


\begin{proof}~
    \begin{itemize}
       \item[(i)] Note that we can embed $1 \otimes C(\widehat{G})$ into $\operatorname{BUC}(\Xi)$. In particular, $(\psi(\xi_{\gamma}))_{\gamma \in \Gamma}$ converges for all $\psi \in C(\widehat{G})$. Since $\widehat{G}$ is compact, $(\xi_\gamma)_{\gamma \in \Gamma}$ needs to converge to some $\xi \in \widehat{G}$.
        \item[(ii)] A net $(x_\gamma, \xi_\gamma)_{\gamma \in \Gamma}$ converges in $\mathcal M(\BUC(\Xi))$ if and only if $(f(x_\gamma, \xi_\gamma))_{\gamma \in \Gamma}$ converges for every $f \in \BUC(\Xi)$. Let $f \in \BUC(\Xi)$ and consider the function
        \[g \from \Xi \to \C, \quad g(x',\xi') := f(x',\xi).\]
        Clearly, $g \in \BUC(\Xi)$ and therefore $(g(x_\gamma, \xi_\gamma))_{\gamma \in \Gamma} = (f(x_\gamma,\xi))_{\gamma \in \Gamma}$ converges. This shows that $(x_\gamma, \xi)_{\gamma \in \Gamma}$ converges to some $z' \in \mathcal M(\BUC(\Xi))$. Since $f(x_\gamma, \xi) \to 0$ when $f \in C_0(\Xi)$, we necessarily have $z' \in \partial \Xi$.
        \item[(iii)] Note that $\alpha_{(x_\gamma,\xi_\gamma)}(B) = \alpha_{(x_\gamma,\xi)}(\alpha_{e,\xi^{-1}\xi_\gamma}(B))$. The left-hand side converges to $\alpha_z(B)$, whereas the right-hand side converges to $\alpha_{z'}(B)$.\qedhere
    \end{itemize}
\end{proof}

\begin{proof}[Proof of Corollary \ref{cor:compactness_discrete_grp}]
    We only prove (a) as (b) is just the dual result. If $G$ is discrete, then $\widehat{G}$ is compact and therefore $\BDO(\widehat{G}) = \Lc(M^p(\widehat{G}))$. Therefore, $\mathcal C_1^p(G) = \BDO^p(G)$ by Corollary \ref{corollary:equality_algebras}. Let $B \in \BDO^p(G)$. According to Theorem \ref{thm:compactness}, we only need to show that if $\alpha_x(B) = 0$ for every $x \in \partial G$, then $\alpha_z(B) = 0$ for all $z \in \partial \Xi$. So let $(x_\gamma, \xi_\gamma)_{\gamma \in \Gamma}$ be a net in $G \times \widehat{G}$ that converges to some $z \in \partial \Xi$. Lemma \ref{lem:product_boundary} implies that there is a $\xi \in \widehat{G}$ and a $z' \in \partial\Xi$ such that $\xi_\gamma \to \xi$, $(x_\gamma,\xi) \to z'$ and $\alpha_z(B) = \alpha_{z'}(B)$. By taking a subnet if necessary, we may also assume that $x_\gamma \to x \in \partial G$ for some $x \in \partial G$. Now, if $\alpha_{(x_\gamma,1)}(B) \to 0$ as $x_\gamma \to x$, then also
    \[\alpha_{z'}(B) = \lim\limits_{x_\gamma \to x} \alpha_{x_\gamma,\xi}(B) = \lim\limits_{x_\gamma \to x} \alpha_{(e,\xi)}(\alpha_{(x_\gamma,1)}(B)) = 0.\]
    It follows that $\alpha_z(B) = 0$ for all $z \in \partial \Xi$.
\end{proof}

Combining Corollary \ref{cor:compactness_discrete_grp} with the following well-known fact yields the standard compactness characterization for operators acting on $\ell^p$-spaces over discrete abelian groups. It is worth mentioning that \cite{Spakula_Willett} does not assume that $G$ is an abelian group. However, the assumptions of \cite{Spakula_Willett} imply that $G$ is countable, which we do not assume.

\begin{proposition}~
    \begin{itemize}
        \item[(a)] Let $G$ be discrete. Then $M^p(G) = \ell^p(G)$.
        \item[(b)] Let $G$ be compact. Then $M^p(G) = \mathcal F^{-1}\big(\ell^p(\widehat{G})\big)$.
    \end{itemize}
\end{proposition}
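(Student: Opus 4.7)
For part (a), the plan is to use the atomic window $\varphi_0 := \1_{\{e\}}$, which belongs to $\mathcal H = \ell^2(G)$ precisely because $G$ is discrete. A one-line computation using the standard representation gives $U_{(x,\nu)}\varphi_0 = a(x,\nu)\nu(x)\1_{\{x\}}$, and hence for every $f \in \mathcal H$
\[
[\mathcal W_{\varphi_0}(f)](x,\nu) = \overline{a(x,\nu)\nu(x)}\,f(x), \qquad \big|[\mathcal W_{\varphi_0}(f)](x,\nu)\big| = |f(x)|.
\]
Since $\widehat G$ is compact, its Haar measure has finite total mass $C := |\widehat G|$, so integration over $\Xi = G \times \widehat G$ yields
\[
\|\mathcal W_{\varphi_0}(f)\|_{L^p(\Xi)} = C^{1/p}\,\|f\|_{\ell^p(G)} \quad \text{for all } 1 \leq p \leq \infty.
\]
Taking $p = 1$ and $f = \varphi_0$ shows $\varphi_0 \in \mathcal H_1$, while the general formula identifies $\mathcal H_1 = \Co_1(U)$ with $\ell^1(G)$ (with equivalent norm).

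Once $\mathcal H_1 \cong \ell^1(G)$ is available, the antidual $\mathcal H_\infty$ is identified with $\ell^\infty(G)$ via $\phi \mapsto c_\phi$ where $c_\phi(x) := \phi(\1_{\{x\}})$. Using antilinearity of $\phi$ and the formula for $U_{(x,\nu)}\varphi_0$ again, we get
\[
[\mathcal W_{\varphi_0}(\phi)](x,\nu) = \phi(U_{(x,\nu)}\varphi_0) = \overline{a(x,\nu)\nu(x)}\,c_\phi(x),
\]
so that $\|\mathcal W_{\varphi_0}(\phi)\|_{L^p(\Xi)} = C^{1/p}\|c_\phi\|_{\ell^p(G)}$. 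Consequently $\phi \in M^p(G) = \Co_p(U)$ if and only if $c_\phi \in \ell^p(G)$, proving $M^p(G) = \ell^p(G)$ as Banach spaces.

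For part (b), since $G$ is compact, $\widehat G$ is discrete, so part (a) applied to $\widehat G$ — where by Pontryagin duality the phase space is $\widehat G \times G$ — yields $M^p(\widehat G) = \ell^p(\widehat G)$. A direct change of variables shows that $\mathcal F \colon L^2(G) \to L^2(\widehat G)$ intertwines the standard representation $U$ with the standard representation on $L^2(\widehat G)$ associated to the Pontryagin-dual phase space, up to a scalar depending on the cocycle $a$. Hence $\mathcal F$ maps $\Co_p(U)$ onto the coorbit space on $\widehat G$ with equivalent norm; combining this with part (a) applied on $\widehat G$ gives $M^p(G) = \mathcal F^{-1}\big(M^p(\widehat G)\big) = \mathcal F^{-1}\big(\ell^p(\widehat G)\big)$.

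The main obstacle is the intertwining calculation in part (b): one must verify, with a careful accounting of the cocycle, that under $\mathcal F$ the standard projective representation on $L^2(G)$ becomes (up to a scalar) the standard projective representation on $L^2(\widehat G)$ in the $\widehat G \times G$ picture, so that the resulting coorbit spaces coincide. Part (a) and all remaining reductions then follow from the single pointwise identity $|[\mathcal W_{\varphi_0}(f)](x,\nu)| = |f(x)|$ combined with integration against the finite Haar measure of $\widehat G$.
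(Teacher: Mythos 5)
Your proposal is correct and follows essentially the same route as the paper: the atomic window $\varphi_0 = \1_{\{e\}}$, the pointwise identity $\abs{[\mathcal W_{\varphi_0}(f)](x,\nu)} = \abs{f(x)}$, integration against the finite Haar measure of $\widehat G$, and reduction of (b) to (a) via $\mathcal F(M^p(G)) = M^p(\widehat G)$. The only difference is that you spell out the identification of $\mathcal H_\infty$ with $\ell^\infty(G)$ (needed to interpret general elements of $\Co_p(U)$ as sequences), a point the paper glosses over.
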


\begin{proof}
    We only prove the first result as the second one is an obvious consequence of the first one combined with the fact that $\mathcal F \big(M^p(G)\big) = M^p(\widehat{G})$.

    When $G$ is a discrete group, we can choose the window function as
    \[\varphi_0(x) := \begin{cases}
            1 & \text{for } x = e,\\
            0 & \text{for } x \neq e.
        \end{cases}\]    
    Clearly, 
    \begin{align*}
        \mathcal W_{\varphi_0}(\varphi_0)(x, \xi) = \langle \varphi_0, U_{(x, \xi)}\varphi_0\rangle = \begin{cases}
            1 & \text{for } x = e,\\
            0 & \text{for } x \neq e.
        \end{cases}
    \end{align*}
    Therefore,
    \begin{align*}
        \int_{\widehat{G}}\int_G \abs{\mathcal W_{\varphi_0}(\varphi_0)(x, \xi)} \, \mathrm{d}x \, \mathrm{d}\xi = \int_{\widehat{G}} 1 \, \mathrm{d}\xi < \infty
    \end{align*}
    because $\widehat{G}$ is compact. It follows $\varphi_0 \in \mathcal H_1$. Now, for any function $f: G \to \mathbb C$ we have that
    \begin{align*}
        \int_{\widehat{G}} \int_G |\mathcal W_{\varphi_0}(f)(x, \xi)|^p \, \mathrm{d}x \, \mathrm{d}\xi &= \int_{\widehat{G}} \int_G |f(x)|^p \, \mathrm{d}x \, \mathrm{d}\xi.
    \end{align*}
    In particular, the $M^p(G)$-norm is equivalent to the $L^p(G)$-norm, proving the statement.
\end{proof}

We arrive at the following classical theorem for discrete abelian groups:

\begin{theorem} \label{thm:matrix_case}
    Let $B \in \mathcal L(\ell^p(G))$, where $G$ is a discrete abelian group, and let $(B_{j,k})_{j,k \in G}$ denote the matrix coefficients of $B$. Then $B$ is compact if and only if $B \in \operatorname{BDO}^p(G)$ and $B_{jm_\gamma,km_\gamma} \to 0$ as $m_{\gamma} \to \partial G$ for every pair $(j,k) \in G \times G$.
\end{theorem}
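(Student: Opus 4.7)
The plan is to deduce this theorem from Corollary~\ref{cor:compactness_discrete_grp}(a) by translating the abstract condition ``$\alpha_x(B) = 0$ for every $x \in \partial G$'' into the concrete matrix-entry condition. The key computation is the identity
\[(\alpha_x(B))_{j,k} = B_{x^{-1}j,\,x^{-1}k} \qquad (x,j,k \in G),\]
which follows from $U_{(x,1)}\delta_k = a(x,1)\delta_{xk}$ once one notices that the phase $a(x,1)$ cancels between $U_{(x,1)}$ and $U_{(x,1)}^{-1}$. Since $G$ is abelian, pointwise convergence $(\alpha_{x_\gamma}(B))_{j,k} \to 0$ is exactly $B_{jm_\gamma,km_\gamma} \to 0$ with $m_\gamma = x_\gamma^{-1}$. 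Inversion is a homeomorphism of $G$, so $m_\gamma$ has no convergent subnet in $G$ iff $x_\gamma$ does not, placing the two notions of ``$\to \partial G$'' in bijection.

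The forward direction is then essentially immediate. If $B$ is compact, Corollary~\ref{cor:compactness_discrete_grp}(a) supplies $B \in \BDO^p(G)$ and $\alpha_x(B) = 0$ for every $x \in \partial G$. For any net $m_\gamma$ in $G$ without convergent subnet, every subnet of $x_\gamma := m_\gamma^{-1}$ has a further subnet converging in $\beta G$ to some $x \in \partial G$; along that subnet $\alpha_{x_\gamma}(B) \to 0$ strongly, so testing against $\delta_j,\delta_k$ gives $B_{jm_\gamma,km_\gamma} = (\alpha_{x_\gamma}(B))_{j,k} \to 0$. Since every subnet has a further subnet tending to $0$, the whole net does.

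For the reverse direction, suppose $B \in \BDO^p(G)$ and $B_{jm_\gamma,km_\gamma} \to 0$ for every $(j,k)$ whenever $m_\gamma \to \partial G$. Fix $x \in \partial G$ and a net $x_\gamma \to x$, and set $m_\gamma = x_\gamma^{-1}$. Since $\|\alpha_{x_\gamma}(B)\| = \|B\|$ uniformly and finitely supported vectors are dense in $\ell^p(G)$, it suffices to prove $\|\alpha_{x_\gamma}(B)\delta_k\|_p \to 0$ for each $k \in G$. Because $U_{(x_\gamma,1)}$ is an isometry, this reduces to $\|B\delta_{l_\gamma}\|_p \to 0$ with $l_\gamma := m_\gamma k$, which also has no convergent subnet in $G$.

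The band-dominated structure enters through a uniform column-tail estimate. For $\epsilon > 0$, pick $B' \in \BO_K^p(G)$ with $K \subseteq G$ finite and $\|B-B'\| < \epsilon$. Since $B'\delta_l$ is supported in $lK$, writing $P_{lK}$ for the coordinate projection yields
\[\|(I-P_{l_\gamma K})B\delta_{l_\gamma}\|_p = \|(I-P_{l_\gamma K})(B-B')\delta_{l_\gamma}\|_p \leq \|B-B'\| < \epsilon\]
uniformly in $\gamma$. Disjointness of supports then gives
\[\|B\delta_{l_\gamma}\|_p^p \leq \|P_{l_\gamma K}B\delta_{l_\gamma}\|_p^p + \epsilon^p = \sum_{k' \in K} |B_{k'l_\gamma,\,l_\gamma}|^p + \epsilon^p,\]
and the finite sum vanishes in the limit by the hypothesis applied with $j = k'$ and $k = e$. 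Hence $\limsup_\gamma \|B\delta_{l_\gamma}\|_p \leq \epsilon$, and since $\epsilon$ was arbitrary the limit is zero. The only real obstacle — promoting pointwise matrix-entry decay to column-norm decay — is exactly resolved by this uniform localization of columns, which is intrinsic to $\BDO^p(G)$.
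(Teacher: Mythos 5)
Your proof is correct and follows exactly the route the paper intends: Theorem \ref{thm:matrix_case} is presented there as an immediate consequence of Corollary \ref{cor:compactness_discrete_grp}(a) together with $M^p(G)=\ell^p(G)$, and your argument simply supplies the (standard) details of translating $\alpha_x(B)=0$ on $\partial G$ into decay of matrix entries. In particular, the identity $(\alpha_x(B))_{j,k}=B_{x^{-1}j,x^{-1}k}$ and the uniform column-tail estimate via approximation by a band operator of finite band-width $K$ are precisely the steps the paper leaves implicit, and both are carried out correctly.
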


At first sight, the condition $B_{jm_\gamma,km_\gamma} \to 0$ might seem strange and one would probably expect $B_{j + m_\gamma,k + m_\gamma} \to 0$ instead. Note that this is only due to our convention to write lca groups multiplicatively.

Let us now consider some simple examples of operators on $L^2(G)$.
\begin{example}~
    \begin{itemize}
        \item[(a)] As mentioned in the introduction, it appears that the limit operator method does not work as expected for multiplication operators on $\R$. We want to revisit this example to see what is going wrong. Let $G$ be an arbitrary lca group and let us write $\Xi := G \times \widehat{G}$ as before. Consider $f \in C_0(G)$ such that $\widehat{f} \in L^1(\widehat{G})$. Then clearly the multiplication operator $M_f$ is in $\BDO^2(G)$. Moreover,
        \[C_{\widehat f}g(\nu) := \Fc M_f \Fc^{-1}g(\nu) = \int_{\widehat{G}} \widehat{f}(\mu^{-1}\nu)g(\mu) \, \mathrm{d}\mu\]
        for all $g \in L^2(\widehat{G})$ and $\nu \in \widehat{G}$. As we assumed $\widehat{f} \in L^1(\widehat{G})$, Lemma \ref{lemma:support} shows that $\Fc M_f \Fc^{-1} \in \BDO^p(\widehat{G})$. Now let us consider the limit operators of $M_f$. As expected,
        \[\alpha_{(x,\nu)}(M_f) = M_{f(x^{-1} \cdot)}\]
        and so if we fix $\nu \in \widehat{G}$, then $\alpha_{(x,\nu)}(M_f) \to 0$ as $(x,\nu) \to \partial\Xi$. However, if we fix $x \in G$, say $x = e$, then $\alpha_{(e,\nu)}(M_f)$ does not converge to $0$ as $(e,\nu) \to \partial\Xi$. Therefore Theorem \ref{thm:compactness} implies that $M_f$ is not compact unless $G$ is discrete (because in the latter case there is no net $(\nu_{\gamma})_{\gamma \in \Gamma}$ such that $(e,\nu_{\gamma}) \to \partial\Xi$, see Corollary \ref{cor:compactness_discrete_grp}). Hence we get the expected result.
        \item[(b)] Let $\varphi \in C_0(G)$ and $\psi \in L^1(G)$ such that $\widehat{\varphi} \in L^1(\widehat{G})$ and $\widehat{\psi} \in C_0(\widehat{G})$. Consider the product of the multiplication operator $M_{\varphi}$ and the convolution operator $C_{\psi}$:
        \[A := M_{\varphi}C_{\psi}.\]
        Note that $A$ is an integral operator but not Hilbert--Schmidt in general. Then as in Example (a) one can easily see that both $M_{\varphi}$ and $C_{\psi}$ are in $\BDO^p(G)$ and in $\Fc^{-1} \BDO^p(\widehat{G}) \Fc$. This of course implies $A \in \BDO^p(G) \cap (\Fc^{-1} \BDO^p(\widehat{G}) \Fc)$. Let $(x_{\gamma},\nu_{\gamma})_{\gamma \in \Gamma}$ be a net in $\Xi$ that converges to some $z \in \partial\Xi$. Since
        \[\alpha_{(x,\nu)}(M_{\varphi}) = M_{\varphi(x^{-1} \cdot)} \quad \text{and} \quad \alpha_{(x,\nu)}(C_{\psi}) = C_{\psi\nu},\]
        we either get $\alpha_z(M_{\varphi}) = 0$ if $(x_{\gamma})_{\gamma \in \Gamma}$ escapes every compact set or $\alpha_z(C_{\psi}) = 0$ if $(\nu_{\gamma})_{\gamma \in \Gamma}$ escapes every compact set. Either way, we have $\alpha_z(A) = 0$ for all $z \in \partial \Xi$. Theorem \ref{thm:compactness} thus implies that $A$ is compact.
        \item[(c)] More generally, we can consider integral operators $A \from L^2(G) \to L^2(G)$ with suitably nice kernel $k_A \from G \times G \to \C$. Then $\widetilde{A} := \Fc A\Fc^{-1} \from L^2(\widehat{G}) \to L^2(\widehat{G})$ is also an integral operator with kernel
        \[k_{\widetilde{A}}(\nu,\mu) = \int_G\int_G k_A(x,y)\overline{\nu(x)}\mu(y) \, \mathrm{d}x \, \mathrm{d}y.\]
        Hence, for instance, if there are $g \in L^1(G)$ and $h \in L^1(\widehat{G})$ such that $\abs{k_A(x,y)} \leq g(y^{-1}x)$ and $\abs{k_{\widetilde{A}}(\nu,\mu)} \leq h(\mu^{-1}\nu)$, then $A \in \BDO^p(G) \cap (\Fc^{-1} \BDO^p(\widehat{G}) \Fc)$. Moreover,
        \[\alpha_{(x,\nu)}(A)f(y) = \int_G k_A(x^{-1}y,x^{-1}z)\nu(z^{-1}y)f(z) \, \mathrm{d}z.\]
        The compactness condition $\alpha_{(x,\nu)}(A) \to 0$ as $(x,\nu) \to \partial\Xi$ thus translates to a condition on the kernel of $A$. This very much resembles the matrix case discussed in Theorem \ref{thm:matrix_case}.
    \end{itemize}
\end{example}

\appendix

\section{Lca groups and Property A'}

The following is a metric-free version of \cite[Definition 2.3]{HaggerSeifert}.

\begin{definition}
    Let $G$ be a topological group. We say that $G$ has property $A'$ if for every $\epsilon > 0$ and every compact neighbourhood $H$ of $e \in G$ there is a collection $(\rho_j)_{j \in \Jc}$ of Borel measurable functions $\rho_j \from G \to [0,1]$ with the following properties:
    \begin{itemize}
        \item[(i)] It holds $\sum\limits_{j \in \Jc} \rho_j(g) = 1$ for all $g \in G$.
        \item[(ii)] There exists a compact set $K \subseteq G$ such that for every $j \in \Jc$ there is a $g \in G$ such that $\supp\rho_j \subseteq gK$.
        \item[(iii)] We have $\sum\limits_{j \in \Jc} \abs{\rho_j(g) - \rho_j(gh^{-1})} < \epsilon$ for all $g \in G$ and $h \in H$.
        \item[(iv)] For all compact subsets $K \subseteq G$ the set $\set{j \in \Jc : \supp \rho_j \cap K \neq \emptyset}$ is finite.
    \end{itemize}
\end{definition}

We note that the fourth property ensures that the possibly uncountable sums make sense.

\begin{theorem}
    Every lca group $G$ satisfies property $A'$ and the functions $\rho_j$ can be chosen in $S_0(G) \cap C_c(G)$.
\end{theorem}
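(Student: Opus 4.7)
The plan is to combine the amenability of lca groups with a classical partition-of-unity construction based on a maximal uniformly separated subset of $G$. The functions $\rho_j$ will be built as normalized translates of a single convolution-type Reiter function, so that the Reiter property of the underlying function transfers, via a bounded-multiplicity argument, to the partition of unity.

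Fix $\epsilon > 0$ and a compact neighborhood $H$ of $e$. Choose a compact symmetric neighborhood $V$ of $e$, and by Zorn's lemma fix a maximal subset $(g_j)_{j \in \Jc} \subseteq G$ such that the translates $g_j V$ are pairwise disjoint; then $G = \bigcup_{j \in \Jc} g_j V^2$, and a volume-packing argument shows that every translate of $V^2$ contains between one and $N := |V^3|/|V|$ of the points $g_j$. Since every lca group is amenable, for any $\delta > 0$ there exists a compact set $F \supseteq V^4$ satisfying $|F \triangle Fh^{-1}| < \delta |F|$ for all $h \in H$. Setting
\[\phi := \frac{\1_F * \1_{V^2}}{|F| \cdot |V^2|}\]
gives a continuous, non-negative, compactly supported function in $S_0(G) \cap C_c(G)$ (as a convolution of two compactly supported $L^2$-functions), with $\int_G \phi = 1$ and $\supp \phi \subseteq FV^2$.

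Define $\Phi := \sum_{j \in \Jc} \phi(g_j^{-1} \cdot)$ and $\rho_j := \phi(g_j^{-1} \cdot)/\Phi$. A Fubini computation, using the bounded-multiplicity estimate and the inclusion $V^4 \subseteq F$, gives $\Phi(g) \in [1/|V^2|, N/|V^2|]$ uniformly in $g$. Property (i) is immediate, (ii) holds with $K := FV^2$, and (iv) follows from the local finiteness of $(g_j)$ (a consequence of the disjointness of $(g_j V)_j$). The crux is property (iii). Writing $\phi - \alpha_h \phi = (\1_F - \alpha_h \1_F) * \1_{V^2}/(|F| \cdot |V^2|)$ and using bounded multiplicity to convert the pointwise sum into an integral yields
\[\sum_{j \in \Jc} |\phi(g_j^{-1} g) - \phi(g_j^{-1} gh^{-1})| \leq \frac{N \, |F \triangle Fh^{-1}|}{|F| \cdot |V^2|} \leq \frac{N\delta}{|V^2|},\]
and a routine manipulation via a common denominator bounds $\sum_{j} |\rho_j(g) - \rho_j(gh^{-1})|$ by $2/\Phi(g)$ times this sum, i.e., by at most $2N\delta$. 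Choosing $\delta := \epsilon/(2N)$ at the outset then yields (iii).

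The main obstacle is the pointwise-to-integral conversion in the key estimate, which relies crucially on the convolution form of $\phi$ together with the bounded multiplicity of the cover $(g_j V^2)_j$; without these, one cannot control the sum $\sum_j |\phi - \alpha_h \phi|(g_j^{-1} g)$ uniformly in $g$. Finally, $\rho_j \in S_0(G) \cap C_c(G)$ is verified as follows: $\phi(g_j^{-1} \cdot)$ lies in $S_0(G) \cap C_c(G)$, and on a neighborhood of its support $\Phi$ equals a finite sum of $S_0(G) \cap C_c(G)$-functions bounded below by $1/|V^2|$, so standard stability properties of the Feichtinger algebra (closure under reciprocals of bounded-below, locally finite $S_0(G)$-sums) imply that the quotient $\rho_j$ lies in $S_0(G) \cap C_c(G)$.
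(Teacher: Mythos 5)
Your construction is correct in its essentials but takes a genuinely different route from the paper's. You translate a single Reiter-type bump $\phi=\1_F\ast\1_{V^2}/(\lambda(F)\lambda(V^2))$ along a maximal $V$-separated net $(g_j)$ and then normalize by the sum $\Phi$; the paper instead takes a locally finite partition of unity $(\alpha_j)$ subordinate to a cover by translates of a fixed relatively compact set (via paracompactness of lca groups) and sets $\rho_j:=\lambda(K)^{-1}\alpha_j\ast\1_K$ with $K$ the F{\o}lner set. Putting the partition of unity \emph{inside} the convolution buys three simplifications: $\sum_j\rho_j=1$ holds identically, so no normalization and hence no lower bound on any sum is ever needed; the Reiter estimate passes through the sum by a single application of Tonelli; and $\rho_j\in S_0(G)\cap C_c(G)$ is immediate from the quoted fact that convolutions of compactly supported bounded functions lie in $S_0(G)$. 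Your route hinges on the two-sided bound $\Phi\in[\lambda(V^2)^{-1},N\lambda(V^2)^{-1}]$, which is indeed correct, but only if one applies Tonelli ``the right way'': integrating the count over $F$ rather than over $gV^2$, namely
\begin{equation*}
\Phi(g)=\frac{1}{\lambda(F)\lambda(V^2)}\int_F \#\{j: g_j\in gy^{-1}V^2\}\,\mathrm{d}y\in\left[\frac{1}{\lambda(V^2)},\frac{N}{\lambda(V^2)}\right],
\end{equation*}
using that every translate of $V^2$ contains between $1$ and $N$ of the $g_j$. You should spell this out, since the naive Fubini (integrating the multiplicity of the sets $g_jF$ over $gV^2$) only yields $\Phi(g)\geq\lambda(F)^{-1}$, which would make the final choice of $\delta$ circular. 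Note also that the inclusion $V^4\subseteq F$ plays no role in this computation and can be dropped, which spares you from justifying that F{\o}lner sets can be chosen to contain a prescribed compact set.

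The one step I would not accept as written is the membership $\rho_j=\phi(g_j^{-1}\cdot)/\Phi\in S_0(G)$. ``Closure of $S_0(G)$ under reciprocals of bounded-below, locally finite $S_0$-sums'' is not an off-the-shelf fact. What is actually needed is a Wiener--L\'evy argument: near $\supp\phi(g_j^{-1}\cdot)$ the function $\Phi$ agrees with a finite sum $\Phi_j\in S_0(G)$, which lies in the Fourier algebra $A(G)=\mathcal{F}L^1(\widehat{G})$ and is bounded below there; Wiener's $1/f$-theorem for $A(G)$ produces $u\in A(G)$ with $u=1/\Phi_j$ on a compact neighbourhood of that support; and the facts that $A(G)\cap C_c(G)\subseteq S_0(G)$ and that $S_0(G)$ is a pointwise-multiplication algebra then give $\rho_j\in S_0(G)$. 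This is true but nontrivial; either supply this argument (with a reference for the two facts about $S_0(G)$ and $A(G)$) or adopt the paper's construction, which avoids the division altogether.
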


Our proof is inspired by \cite{DeprezLi2015}, but since our locally compact groups are abelian, the situation is a bit easier and therefore we present a complete proof for convenience. Also note that for our purposes here, unlike in \cite{DeprezLi2015}, we do not need to assume that $G$ is second countable.

\begin{proof}
As every lca group is amenable by \cite[Proposition 0.15]{Paterson1988}, \cite[Theorem 4.10]{Paterson1988} implies that for every $\epsilon > 0$ and every compact subset $H \subseteq G$ there is a compact subset $K\subseteq G$ with $\lambda(K) > 0$ such that
\[\frac{\lambda(hK \Delta K)}{\lambda(K)} < \epsilon\]
for all $h \in H$. For every $g \in G$ define
\[\eta_g(h) := \frac{1}{\lambda(K)}\1_K(g^{-1}h) = \begin{cases} \frac{1}{\lambda(K)} & \text{if } g^{-1}h \in K,\\ 0 & \text{otherwise.}\end{cases}\]
Clearly, $\eta_g$ is measurable and
\[\int_G \eta_g(x) \, \mathrm{d}x = \int_{gK} \frac{1}{\lambda(K)} \, \mathrm{d}x = 1\]
for every $g \in G$. Moreover,
\[\int_G \abs{\eta_g(x) - \eta_{gh^{-1}}(x)} \, \mathrm{d}x = \int_{gK \Delta gh^{-1}K} \frac{1}{\lambda(K)} \, \mathrm{d}x = \frac{\lambda(gK \Delta gh^{-1}K)}{\lambda(K)} = \frac{\lambda(hK \Delta K)}{\lambda(K)} < \epsilon\]
for all $g \in G$ and $h \in H$.

Let $U \subseteq G$ be an open, relatively compact neighbourhood of $e$. Then $\set{gU : g \in G}$ is an open cover of $G$. By \cite[Chapter II, Theorem 8.13]{Hewitt_Ross}, every lca group is paracompact. Therefore $\set{gU : g \in G}$ has a locally finite, open refinement $\set{U_j : j \in \Jc}$ for some index set $\Jc$. That is, for every compact set $K' \subseteq G$ the set
\[\set{j \in \Jc : U_j \cap K' \neq \emptyset}\]
is finite. Let $\set{\alpha_j : j \in \Jc}$ be a partition of unity subordinate to $\set{U_j : j \in \Jc}$ and define
\[\rho_j(g) := \int_{U_j} \alpha_j(x)\eta_g(x) \, \mathrm{d}x = \frac{1}{\lambda(K)} (\alpha_j \ast \1_K)(g).\]
By \cite[Lemma 4.3(iii)]{jakobsen18}, we know that $\rho_j \in S_0(G)$. It is also clear that each $\rho_j$ has compact support. More precisely, if $g^{-1}h \notin K$ for all $h \in U_j$, then $\rho_j(g) = 0$ and it follows that the support of $\rho_j$ is contained in $U_jK^{-1} = g'UK^{-1}$ for some $g' \in G$. Moreover, each $\rho_j$ is continuous since
\[\abs{\rho_j(g) - \rho_j(h)} \leq \int_{U_j} \alpha_j(x)\abs{\eta_g(x) - \eta_h(x)} \, \mathrm{d}x \leq \int_G \abs{\eta_g(x) - \eta_h(x)} \, \mathrm{d}x < \epsilon\]
for $g,h \in G$ with $h^{-1}g \in H$. We further observe that if $\supp\rho_j \cap K' \neq \emptyset$ for some compact set $K' \subseteq G$, then $U_j \cap K'K \neq \emptyset$. But $K'K$ is compact and therefore by the local finiteness of $\set{U_j : j \in \Jc}$, the set $\set{j \in \Jc : \supp\rho_j \cap K' \neq \emptyset}$ is finite. This ensures that all the upcoming sums are actually finite. In particular,
\[\sum\limits_{j \in \Jc} \rho_j(g) = \int_G \eta_g(x) \, \mathrm{d}x = 1\]
for all $g \in G$. Finally,
\begin{align*}
    \sum\limits_{j \in \Jc} \abs{\rho_j(g) - \rho_j(gh^{-1})} &\leq \sum\limits_{j \in \Jc} \int_{U_j} \alpha_j(x)\abs{\eta_g(x) - \eta_{gh^{-1}}(x)} \, \mathrm{d}x\\
    &= \int_G \abs{\eta_g(x) - \eta_{gh^{-1}}(x)} \, \mathrm{d}x\\
    &< \epsilon
\end{align*}
for $h \in H$.
\end{proof}

\bibliographystyle{abbrv}
\bibliography{main}

\vspace{1cm}

\begin{multicols}{2}

\noindent
Robert Fulsche\\
\href{fulsche@math.uni-hannover.de}{\Letter ~fulsche@math.uni-hannover.de}
\\
\noindent
Institut f\"{u}r Analysis\\
Leibniz Universit\"at Hannover\\
Welfengarten 1\\
30167 Hannover\\
GERMANY\\

\noindent
Raffael Hagger\\
\href{hagger@math.uni-kiel.de}{\Letter ~hagger@math.uni-kiel.de}
\\
\noindent
Mathematisches Seminar\\
Christian-Albrechts-Universität zu Kiel\\
Heinrich-Hecht-Platz 6\\
24118 Kiel\\
GERMANY

\end{multicols}
\end{document}